\newcommand{\rr}{\mathcal{R}} 
\newcommand{\kk}{\mathcal{N}} 
\newcommand{\ene}{\mathbbmss{N}} 
\newcommand{\ce}{\mathbbmss{C}} 
\newcommand{\erre}{\mathbbmss{R}} 
\newcommand{\matriz}[2]{\ce_{#1,#2}} 
\newcommand{\mat}[4]
{\left[ \begin{array}{cc} #1 & #2 \\ #3 & #4 \end{array} \right] } 
\renewcommand{\S}{\mathcal{S}} 
\DeclareMathOperator{\rk}{rk} 
\DeclareMathOperator{\tr}{tr} 
\newcommand{\dn}{\mathbf{d}} 
\newcommand{\en}{\mathbf{e}} 
\newcommand{\on}{\mathbf{0}} 
\newcommand{\un}{\mathbf{u}} 
\newcommand{\vn}{\mathbf{v}} 
\newcommand{\wn}{\mathbf{w}} 
\newcommand{\xn}{\mathbf{x}} 
\newcommand{\yn}{\mathbf{y}} 
\newcommand{\mary}[2]{#1^{\| #2}} 
\newcommand{\core}[1]{{#1}^{\tiny{\textcircled{\#}}}} 
\newcommand{\cored}[1]{{#1}_{\tiny{\textcircled{\#}}}} 
\newcommand{\N}{\mathcal{N}}
\newcommand{\M}{\mathcal{M}}
\newcommand{\T}{\mathcal{T}}
\newcommand{\ADE}{A^{\parallel (D,E)}}
\newtheorem{df}{Definition}[section]
\newtheorem{thm}[df]{Theorem}
\newtheorem{cor}[df]{Corollary}
\newtheorem{rema}[df] {Remark}
\newtheorem{lem}[df] {Lemma}
\newtheorem{pro}[df] {Proposition}
\title{On one-sided $(B,C)$-inverses of arbitrary matrices}
\author{Julio Ben\'{\i}tez, Enrico Boasso, Hongwei Jin}
\date{}
\begin{document}
\maketitle


\begin{abstract}\noindent In this article one-sided $(b, c)$-inverses of  arbitrary matrices as well as one-sided inverses along a (not necessarily square) matrix, 
will be studied. In adddition, the $(b, c)$-inverse and the inverse along an element will be also researched in the context of 
rectangular matrices. 
\par
\medskip
\noindent {\it Keywords:} One-sided $(b, c)$-inverse; One-sided inverse along an element; $(b, c)$-inverse; Inverse along an element; Matrices\par
\medskip
\noindent {\it AMS classification:} 15A09, 15A23, 15A60, 65F99
\end{abstract}


\section{Introduction and notation}

Several generalized inverses have been studied in the literature. Recently
two important outer inverses have been introduced: the inverse along an element (see \cite{Mary})
and the $(b,c)$-inverse (see \cite{Drazin}). In fact, these two generalized  inverses encompass some of the most important
outer inverses such as the group inverse, the Drazin inverse and the Moore-Penrose inverse.  
Furthermore, in the context of semigroups the left and right inverses along an element were defined in \cite{zhu};
these notions extend the inverse along an element.  Similarly, in the frame of rings, left and right $(b, c)$-invertible elements were introduced in \cite{KVC}; 
these definitions  extend both the $(b, c)$-inverse and the left and right inverses along an element.\par

\indent As it has been said, the aforementioned outer inverses and their extensions were defined in semigroups or rings.
However, observe that the set of $n \times m$ complex matrices is not a semigroup (unless $n=m$).
The main purpose of this article is to extend the above mentioned (one-sided) inverses as well as the $(b, c)$-inverse and the inverse along an element to arbitrary 
matrices and to study their basic properties. Naturally, the results presented also hold for square matrices.

\indent In section 3, after having recalled the main notions considered in this article in section 2, the one-sided $(b, c)$-inverses
and the left and right inverses along an element in the context of arbitrary matrices will be thoroughly studied. In sections 4 and 5 the $(b, c)$-inverse and the inverse along
an element will be introduced and studied in the same frame, respectively. In section 6 it will be characterized when the generalized inverses introduced in sections 4 and 5 are inner inverses.
In section 7 the relationships among the notions considered in sections 4 and 5 and the outer inverse
with prescribed range and null space will be studied. In section 8 the continuity and the differentiability of the notions introduced in sections 4 and 5 will be considered.
Finally, in section 9 algorithms to compute the $(b,c)$-inverse in the matrix frame will be given. \par 

\indent Before going on, the definition of several generalized inverses in the context of rings will be given. The corresponding definitions for
complex matrices can be obtained making obvious changes.
Let $\rr$ be a unitary ring and $a \in \rr$. 
\begin{enumerate}[(i)]
\item The element $a$ is said to be {\em group invertible}, if there exists $x \in \rr$ such that $axa=a$, $xax=x$, 
	and $ax=xa$. This $x$ is unique and it is denoted by $a^\#$.
\item The element $a$ is said to be {\em Drazin invertible}, if there exists $x \in \rr$ such that $xax=x$, $xa=ax$, 
	and $a^{n+1}x=a^n$, for some $n \in \ene$. This $x$ is unique and it is denoted by $a^d$. Note that
when $n=1$, the group inverse is obtained (see \cite{Drazin58}).
\item Let $\rr$ have an involution. The element $a$ is said to be {\em Moore-Penrose invertible}, if exists 
	$x \in \rr$ such that $axa=a$, $xax=x$, $(ax)^*=ax$, and $(xa)^*=xa$. 
	This $x$ is unique and it is denoted by $a^\dag$ (see \cite{penrose}).
\item Let $\mathcal{R}$ have an involution and let $m, n \in \rr$ be invertible 
	Hermitian elements in $\rr$. The element $a \in \mathcal{R}$ is said to be {\em Moore-Penrose invertible 
	with weights $m, n$}, if there exists $x \in \rr$ such that $axa = a$, $xax = x$, 
	$(max)^* = max$, $(nxa)^* = nxa$. This $x$ is unique and it is denoted by $a^\dag_{m,n}$. 
	In a ring $\rr$ with an involution, an element $u \in \rr$ is said to be {\em positive}, if 
	there exists a Hermitian $v \in \rr$ such that $u=v^2$.
\item Let $\rr$ have an involution. The element $a$ is said to be {\em core invertible}, 
	if there exists $x \in \rr$ such that $axa=a$, $x\rr = a\rr$ and $\rr x=\rr a^*$.
	This $x$ is unique and it is denoted by $\core{a}$ (see \cite{bt, rdd}).
\item Let $\rr$ have an involution. The element $a$ is said to be {\em dual core invertible}, 
	if there exists $x \in  \rr$ such that $axa=a$, $x \rr = a^* \rr$ and $\rr x = \rr a$.
	This $x$ is unique and it is denoted by $\cored{a}$ (see \cite{bt, rdd}).
\end{enumerate} 

\indent To end this section, some notation is introduced. Let $m$, $n\in\ene$ and 
denote by $\matriz{m}{n}$ the set of $m \times n$ complex matrices. The symbol $\ce_n$
will stand for $\matriz{n}{n}$.
Any vector of the space $\ce^n$ will be considered as a column vector, i.e.,  
$\ce^n$ will be identified with $\matriz{n}{1}$. 

Moreover, $I_n$ will mean the identity matrix of order $n$, 
$\rk(A)$ the rank  of $A \in \matriz{m}{n}$, and when $n=m$, $\tr(A)$ will stand for  the trace of $A$. Related to a matrix $A \in 
\matriz{m}{n}$ there are two linear subspaces, the column space and the null space, which are defined 
respectively by
$$
\rr(A) = \{ A\xn : \xn \in \ce^n\}, \qquad \kk(A) = \{ \xn \in \ce^n: A \xn = \on \}.
$$

\noindent Recall that $\rk(A) + \dim \kk(A) = n$. 
Given a linear mapping $f:\ce^n \to \ce^m$,
the subsets $\rr(f)$ and $\kk(f)$ are defined in a similar way. Observe that if $A$ is the
matrix associated to $f$ respect with the standard basis, then $\rr(A)=\rr(f)$ and $\kk(A)
= \kk(f)$.

In addition, the conjugate transpose of the matrix $A$ will be denoted by $A^*$. Two basic equalities are
$\kk(A^*)=\rr(A)^\perp$ and $\rr(A^*)=\kk(A)^\perp$, for $A \in \matriz{n}{m}$. 

If $\M$ is a subspace of $\ce^n$, the symbol $I_\M$ will stand for the identity linear transformation on $\M$
and $P_\M$ for the orthogonal
projector onto $\M$. When $\N$ and $\M$ are two subspaces of $\ce^n$,
$P_{\M, \N}$ will stand for the idempotent whose range is $\M$ and whose null space is $\N$.

\indent Recall that given $X\in \matriz{n}{m}$, $Y\in \matriz{m}{n}$ is an \it inner inverse of $A$, \rm if $XYX=X$.
In addition, $Y$ is said to be an \it outer inverse of $A$, \rm if $YXY=Y$. Next  the outer inverse with prescribed range and null space
will be recalled.

Let $A\in\matriz{n}{m}$ and consider subspaces $\T\subseteq \ce^m$ and 
$\S\subseteq\ce^n$ such that $\dim \T=s\le \rk(A)$ and $\dim \S=n-s$. Necessary and sufficient for the matrix $A$ to have an outer inverse $Z$ such that $\rr(Z)=\T$ and $\kk(Z)=\S$
is that $A(\T)\oplus \S=\ce^n$, in which case $Z$ is unique and it is 
denoted by $A^{(2)}_{\T, \S}$ (see for example \cite[Lemma 1.1]{W}).  


\section{The definition of the one-sided $(D,E)$ inverses and their relationship with other inverses}

In first place the definition of the $(b, c)$-inverse will be recalled (see \cite[Definition 1.3]{Drazin}).

\begin{df}\label{def1}
Let $\S$ be a semigroup and consider $a$, $b$, $c\in\S$. The element $y\in\S$
will be said to be {\em the $(b,c)$-inverse} of $a$, if the following 
equations hold:
\begin{enumerate}[{\rm (i)}]
\item $y\in(b\S y)\cap (y\S c)$.
\item $b=yab$, $c=cay$.
\end{enumerate}
\end{df}

According to \cite[Theorem 2.1]{Drazin}, if the element $y$ in Definition \ref{def1} exists, then
it is unique. In this case, the element under consideration will be denoted by $a^{\parallel (b, c)}$. As it was pointed out in \cite{Drazin}, this inverse generalizes among others the standard inverse, the Drazin inverse, 
and the Moore-Penrose inverse. To learn more on this inverse, see \cite{B, CCW, Drazin, Drazin2, KC}.

\indent The inverse along an element was introduced in  \cite[Definition 4]{Mary}. Next its definition will be recalled.\par

\begin{df}\label{def1000}
Let $\S$ be a semigroup. An element $a \in \S$ is said to be {\em invertible along} $d \in \S$ 
if there exists $y \in \S$ such that
\begin{enumerate}[{\rm (i)}]
\item $yad = d = day$.
\item $y\S \subseteq d\S$.
\item $\S y \subseteq \S d$.
\end{enumerate}
\end{df}

\indent According to \cite[Theorem 6]{Mary}, if the element $y \in \S$ in Definition \ref{def1000} exists, then it is unique. This element  is denoted by $\mary{a}{d}$. It is worth noting that according to \cite[Proposition 6.1]{Drazin}, the inverse along an element is a particular
case of the $(b, c)$-inverse, i.e., the $(d, d)$-inverse coincides with the inverse along $d$. The outer inverses recalled in Definition \ref{def1} and Definition \ref{def1000} encompass several 
generalized inverses, as the following two theorems show.

\begin{thm}{\rm (\cite[Theorem 11]{Mary})}
Let $\mathcal{S}$ be a semigroup and let $a \in \mathcal{S}$.
\begin{enumerate}[{\rm (i)}]
\item If $\mathcal{S}$ has a unity, then $a$ is invertible if and only if
	$a$ is invertible along $1$. In this case $a^{-1} = \mary{a}{1}$.
\item $a$ is group invertible if and only if $a$ is invertible along $a$.
	In this case $a^\# = \mary{a}{a}$.
\item $a$ is Drazin invertible if and only if $a$ is invertible along
	$a^m$ for some $m \in \mathbbmss{N}$. In this case $a^D = \mary{a}{a^m}$.
\item If $\mathcal{S}$ is a $*$-semigroup, $a$ is Moore-Penrose invertible if and only if 
	$a$ is invertible along $a^*$. In this case $a^\dag = \mary{a}{a^*}$.
\end{enumerate}
\end{thm}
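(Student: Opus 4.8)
The plan is to prove each of the four equivalences by the same two-step scheme: for the ``only if'' direction I would exhibit the relevant classical inverse as a concrete candidate $y$ and verify directly the three conditions of Definition \ref{def1000}, so that uniqueness of the inverse along an element forces $y=\mary{a}{d}$; for the ``if'' direction I would start from $y=\mary{a}{d}$ and recover the defining equations of the classical inverse in question. Before treating the four cases I would record two structural properties of $y=\mary{a}{d}$ that drive everything. First, $y$ is an outer inverse of $a$: condition (ii) lets one write $y=ds$ for some $s\in\S$, whence $yay=ya(ds)=(yad)s=ds=y$ by condition (i). Second, the containments (ii)--(iii) are in fact equalities of principal one-sided ideals, since condition (i) gives $d=yad\in y\S$ and $d=day\in\S y$; together with (ii)--(iii) this yields $y\S=d\S$ and $\S y=\S d$, i.e. $y$ lies in the same $\mathcal{H}$-class as $d$.

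The ``only if'' directions are then direct computations. For (i), $y=a^{-1}$ satisfies $ya1=1=1ay$ while (ii)--(iii) hold trivially because $1\S=\S=\S1$. For (ii), with $y=a^\#$ the relations $aa^\#=a^\#a$ and $aa^\#a=a$ give $a^\#a^2=a=a^2a^\#$, and the identities $a^\#=a(a^\#)^2=(a^\#)^2a$ show $a^\#\in a\S\cap\S a$. For (iii), with $y=a^D$ and $d=a^m$ (any $m$ not smaller than the Drazin index), the commutation of $a^D$ with powers of $a$ gives $a^Da^{m+1}=a^m=a^{m+1}a^D$ and $a^D=(a^D)^{m+1}a^m\in a^m\S\cap\S a^m$. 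For (iv), in the $*$-semigroup with $y=a^\dag$, one uses that $aa^\dag$ and $a^\dag a$ are Hermitian idempotents to rewrite $a^\dag aa^*=(a^\dag a)^*a^*=(aa^\dag a)^*=a^*$ and symmetrically $a^*aa^\dag=a^*$, together with $a^\dag=a^*(a^\dag)^*a^\dag=a^\dag(a^\dag)^*a^*\in a^*\S\cap\S a^*$. In each case uniqueness yields the displayed formula.

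For the ``if'' directions I would use $y=\mary{a}{d}$ together with the two structural properties above. Case (i) is immediate, since $yad=d=day$ with $d=1$ read $ya=1=ay$. For (ii) with $d=a$ one already has $yay=y$ and $ya^2=a=a^2y$; the only remaining axiom is the commutation $ay=ya$, after which $aya=a(ya)=a(ay)=a^2y=a$ follows at once and $a^\#=y$. For (iii) with $d=a^m$ the picture is identical: $yay=y$ and $a^{m+1}y=a^m=ya^{m+1}$ are in hand, and once $ay=ya$ is known the Drazin axioms hold with index $m$. For (iv) the missing axioms are instead $aya=a$ and the Hermitian conditions $(ay)^*=ay$, $(ya)^*=ya$; here I would apply the involution to the relations of condition (i) and exploit the $*$-compatible ideal equalities $y\S=a^*\S$, $\S y=\S a^*$, which pass under $*$ to $y^*\S=a\S$ and $\S y^*=\S a$.

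The main obstacle, common to (ii) and (iii), is exactly the commutation $ay=ya$: it does not follow formally from $yay=y$ and the power relations alone, and one must feed in the $\mathcal{H}$-class equalities $y\S=d\S$, $\S y=\S d$. Concretely, writing $y=dp=qd$ and using $ya^{m+1}=a^m=a^{m+1}y$, one checks that $ay$ and $ya$ are idempotents acting as identities on the principal ideals generated by $a^m$ from the appropriate sides and sharing the same range and annihilator ideal, from which $ay=ya$ drops out. A cleaner alternative is to invoke Mary's Green's-relation characterisation of invertibility along an element and match it against the classical Green's-relation criteria for group, Drazin and Moore--Penrose invertibility. For (iv) the analogous difficulty is establishing that $ay$ and $ya$ are Hermitian, which is precisely where the $*$-compatibility of $a^*\S$ and $\S a^*$ must be used rather than mere commutation; I expect this symmetry step, and the commutation step in (ii)--(iii), to be the only parts requiring genuine work.
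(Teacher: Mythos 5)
The paper you were asked to match contains \emph{no proof} of this theorem: it is imported verbatim from \cite{Mary}, where Mary derives it from his Green's-relations characterization of invertibility along an element (existence of $\mary{a}{d}$ tied to certain $\mathcal{H}$-classes being groups), combined with the classical Green's-relations criteria for group, Drazin and Moore--Penrose invertibility. Your scheme --- exhibit the classical inverse as a candidate and invoke uniqueness for ``only if''; recover the classical axioms from Definition \ref{def1000} for ``if'' --- is therefore a genuinely different, more elementary route, and after checking the details it is correct and completable. What it buys is self-containedness (no semigroup machinery beyond the fact that an $\mathcal{H}$-class contains at most one idempotent); what Mary's route buys is a uniform treatment of all four cases, which is exactly the ``cleaner alternative'' you gesture at in your last paragraph.

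Three places need repair or completion. First, your claim that in case (ii) the commutation $ay=ya$ ``does not follow formally from $yay=y$ and the power relations alone'' is false: from $ya^2=a=a^2y$ alone, associativity gives $ay=(ya^2)y=y(a^2y)=ya$, so case (ii) needs none of the $\mathcal{H}$-class input (the error is harmless, since your longer route also works). Second, in case (iii), where the ideal conditions genuinely are needed, your sketch omits the one identity on which all of your ``acts as an identity on the appropriate side'' checks depend, namely that $y$ commutes with $a^m$: $ya^m=y(a^{m+1}y)=(ya^{m+1})y=a^my$. Granting it, and writing $y=a^mp=qa^m$ with $p,q\in\S^1$, your plan closes as stated ($ay$ and $ya$ are idempotents lying in $a^m\S^1\cap\S^1a^m$ with $a^m=(ay)a^m=a^m(ay)=(ya)a^m=a^m(ya)$, so both lie in the $\mathcal{H}$-class of $a^m$ and hence coincide), or more quickly: $ay^2=(ay)(a^mp)=a(ya^m)p=a(a^my)p=(a^{m+1}y)p=a^mp=y$ and $y^2a=(qa^m)(ya)=q(a^my)a=q(ya^{m+1})=qa^m=y$, whence $ay=a(y^2a)=(ay^2)a=ya$. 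Third, case (iv) completes exactly as you outline: applying the involution to condition (i) gives $y^*a^*a=a$ and $aa^*y^*=a$, whence $ay=(y^*a^*a)y=y^*(a^*ay)=y^*a^*$ and $ya=y(aa^*y^*)=(yaa^*)y^*=a^*y^*$ are Hermitian, and $aya=(y^*a^*)a=a$; together with $yay=y$ these are the four Penrose equations. Finally, a foundational caveat: your step ``$y=ds$ for some $s\in\S$'' reads condition (ii) of Definition \ref{def1000} as $y\in d\S$, which does not follow from the stated inclusion $y\S\subseteq d\S$ in a semigroup without unity; it is legitimate under Mary's actual convention, which uses the monoid $\S^1$ (so $y\in d\S^1\cap\S^1d$), and you must adopt that convention throughout --- not least because the uniqueness theorem you repeatedly invoke is proved under it.
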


\begin{thm}\label{thmA}
Let $\mathcal{R}$ be a ring with an involution and $a \in \mathcal{R}$.
\begin{enumerate}[{\rm (i)}]
\item {\rm (\cite[Theorem 4.3]{rdd})}
	If $a$ is Moore-Penrose invertible, then $a$ is core invertible if and only
	if it is invertible along $aa^*$. In this case the inverse along $aa^*$ coincides
	$\core{a}$.
\item {\rm (\cite[Theorem 4.3]{rdd})}
	If $a$ is Moore-Penrose invertible, then $a$ is dual core invertible if and only if it is
	invertible along $a^*a$. In this case the inverse along $a^*a$ coincides with $\cored{a}$.
\item {\rm (\cite[Theorem  3.2]{bb})}
	If $m$, $n\in \mathcal{R}$ are invertible and positive, then $a$ is weighted
	Moore-Penrose invertible with weights $m$ and $n$ if and only if
	$a$ is invertible along $n^{-1} a^* m$.
	In this case, the inverse along $n^{-1}a^*m$ coincies with $a^\dagger_{m,n}$.
\end{enumerate}
\end{thm}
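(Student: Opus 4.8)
The plan is to prove all three equivalences by directly checking, in each direction, the three defining conditions of invertibility along an element from Definition~\ref{def1000} — namely $yad=d=day$, $y\rr\subseteq d\rr$ and $\rr y\subseteq\rr d$ — against the defining equations of the core, dual core and weighted Moore--Penrose inverses. I would begin with part (iii), which is the most transparent. Writing $d=n^{-1}a^*m$ and $x=a^\dagger_{m,n}$, the two weighted symmetry equations $(max)^*=max$ and $(nxa)^*=nxa$ are first converted, using that $m$ and $n$ are Hermitian, into the product identities $max=x^*a^*m$ and $nxa=a^*x^*n$. Together with $axa=a$ and $xax=x$ these give $xad=d$ and $dax=d$ after cancelling the invertible $m$ (resp.\ $n$), since, e.g., $nxan^{-1}a^*=a^*x^*a^*=(axa)^*=a^*$. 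Moreover $xax=x$ combined with $nxa=a^*x^*n$ yields $nx=nxax=a^*(x^*nx)\in a^*\rr$, hence $x\in n^{-1}a^*\rr=d\rr$, and symmetrically $\rr x\subseteq\rr a^*m=\rr d$. Thus $x$ meets all three conditions, so by uniqueness of the inverse along an element $x=\mary{a}{d}$.

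For the converse of (iii) — which I expect to be the main obstacle — I would start from $y=\mary{a}{d}$ with $d=n^{-1}a^*m$ and recover the four weighted Moore--Penrose equations. The outer equation $yay=y$ is automatic: since $\rr y\subseteq\rr d$ gives $y=sd$, one has $yay=s(day)=sd=y$. The delicate points are the two weighted symmetry conditions and the inner equation $aya=a$. The trick is to multiply the reduced ``along'' equations by $y^*$ so as to sandwich a Hermitian weight between $ay$ (resp.\ $ya$) and its adjoint. Left-multiplying $a^*may=a^*m$ (the reduced form of $day=d$) by $y^*$ produces $(ay)^*m(ay)=y^*a^*m$, whose left side is visibly self-adjoint, forcing $may=(may)^*$; a mirror computation starting from $nyan^{-1}a^*=a^*$ (the reduced form of $yad=d$) and right-multiplying by $y^*n$ gives $(nya)n^{-1}(nya)^*=(nya)^*$, forcing $nya=(nya)^*$. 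From $nya=(nya)^*=a^*y^*n$ one reads $ya=n^{-1}a^*y^*n$, and substituting this into the adjoint of $nyan^{-1}a^*=a^*$, namely $an^{-1}a^*y^*n=a$, yields $aya=a$. Invertibility of the weights is used throughout to cancel $m$ and $n$.

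Parts (i) and (ii) I would handle in the same spirit, now under the standing hypothesis that $a$ is Moore--Penrose invertible, which supplies the identities $a=aa^*(a^\dagger)^*a^\dagger a$ and $a^*=a^\dagger aa^*=a^*aa^\dagger$; these immediately give $a\rr=aa^*\rr$ and $\rr a^*=\rr aa^*$, so the two ideal conditions for invertibility along $aa^*$ collapse to the ideal conditions $x\rr=a\rr$ and $\rr x=\rr a^*$ appearing in the definition of $\core{a}$. For the forward direction I would use that $\core{a}$ is an outer inverse ($xax=x$) together with $a^*\in\rr x$ to obtain $a^*ax=a^*$ and $xa^2=a$, whence $dax=aa^*ax=aa^*=d$ and $xad=xa^2a^*=aa^*=d$. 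For the converse, starting from $y=\mary{a}{aa^*}$ I would extract $ay=aa^\dagger$ by pre-multiplying $aa^*ay=aa^*$ (the equation $day=d$) by $(a^\dagger)^*a^\dagger$ and simplifying with $a^\dagger aa^*=a^*$; then $aya=aa^\dagger a=a$, and the remaining core conditions follow from the ideal equalities already noted. Finally, part (ii) need not be proved from scratch: applying the involution turns ``$a$ invertible along $d$'' into ``$a^*$ invertible along $d^*$'' and interchanges the core and dual core inverses (as $\cored{a}=(\core{a^*})^*$), so (ii) is exactly (i) read for $a^*$ with $d=a^*a=a^*(a^*)^*$.
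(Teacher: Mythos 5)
The paper offers no proof of this theorem at all: it is stated as a recollection of known results, quoted from \cite[Theorem 4.3]{rdd} and \cite[Theorem 3.2]{bb}. Your self-contained verification is therefore necessarily a different route, and most of it is correct. Part (iii) is complete in both directions: the reduction of the weighted symmetry equations to $max=x^*a^*m$ and $nxa=a^*x^*n$, the check of $xad=d=dax$ and of the two ideal conditions, and, for the converse, the Hermitian-sandwich argument (the left side of $(ay)^*m(ay)=(ay)^*m$ is visibly Hermitian, forcing $(may)^*=may$, and symmetrically for $nya$) followed by the extraction of $aya=a$ are all valid; note you only use that $m$, $n$ are invertible and Hermitian, which positivity guarantees. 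Part (ii) is correctly reduced to (i) via the involution, since $\cored{a}=(\core{(a^*)})^*$ and $y$ inverts $a$ along $d$ exactly when $y^*$ inverts $a^*$ along $d^*$. In the converse of (i) your key computation is also right: pre-multiplying $aa^*ay=aa^*$ by $(a^\dag)^*a^\dag$ and using $a^\dag aa^*=a^*$ gives $ay=aa^\dag$, hence $aya=a$; you should only add explicitly that the \emph{equalities} $y\rr=a\rr$ and $\rr y=\rr a^*$ demanded by the core inverse (not mere inclusions) follow because $aa^*=y(a\,aa^*)\in y\rr$ and $aa^*=(aa^*a)y\in\rr y$.

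The one genuine gap is in the forward implication of (i): you invoke ``$\core{a}$ is an outer inverse ($xax=x$)'' as known. In this paper the core inverse is defined only by $axa=a$, $x\rr=a\rr$ and $\rr x=\rr a^*$; the identity $xax=x$ is not among these axioms, does not follow from them in one line, and is itself one of the substantive facts established in the very reference \cite{rdd} being quoted, so in a blind proof it must be argued. Fortunately, the standing hypothesis that $a^\dag$ exists closes the gap cheaply: from $\rr x=\rr a^*$ write $x=sa^*$, so $axa=a$ reads $asa^*a=a$; taking adjoints, $a^*as^*a^*=a^*$; left-multiplying by $(a^\dag)^*$ and using $(a^\dag)^*a^*a=(aa^\dag)a=a$ together with $(a^\dag)^*a^*=aa^\dag$ yields $x^*a^*=as^*a^*=aa^\dag$, i.e.\ $ax=aa^\dag$. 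From this one gets $a^*ax=a^*$, then $xax=sa^*aa^\dag=sa^*=x$, and finally $xa^2=xa\,xq=(xax)q=a$ upon writing $a=xq$ (which, incidentally, uses $a\in x\rr$, not $a^*\in\rr x$ as you state) --- exactly the identities your computation of $xad=d=dax$ requires. With this patch the whole proposal is correct.
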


\indent Recently the inverse along an element and the $(b, c)$-inverse were extended 
by means of one-sided inverses. Next follow the corresponding definitions. See \cite[Definition 2.1]{KVC} and 
\cite[Definition 2.1]{zhu}.

\begin{df} \label{def1100}
Let $\rr$ be a ring and 
let $b, c \in \rr$. An element $a \in \rr$ is said to be {\em left $(b,c)$-invertible}, 
if there exists $y \in \rr$ such that 
\begin{enumerate}[{\rm (i)}]
\item $yab = b$.
\item $\rr y \subseteq \rr c$.
\end{enumerate}
In this case $y$ is called a {\em left $(b,c)$-inverse} of $a$.

\noindent An element $a \in \rr$ is {\em right $(b,c)$-invertible}, if there exists 
$y \in \rr$ such that
\begin{enumerate}[{\rm (i)}] \setcounter{enumi}{2}
\item $cay = c$.
\item $y \rr \subseteq b\rr$.
\end{enumerate}
In this case $y$ is called a {\em right $(b, c)$-inverse} of $a$.
\end{df}

\indent Recall that given $a,b,c$ elements in a ring $\rr$, according to \cite[Corollary 3.7]{KVC},
$a$ is $(b, c)$-invertible if and only if it is both left and right $(b, c)$-invertible. When in Definition \ref{def1100},
$b=c$, the one-sided inverses along an element are obtained. 

\begin{df}\label{d24}
Let $\mathcal{S}$ be a semigroup. An element $a \in \mathcal{S}$ is {\em left invertible along} 
$d \in \mathcal{S}$, if there exists $y \in \mathcal{S}$ such that
\begin{enumerate}[{\rm (i)}]
\item $yad = d$.
\item $\mathcal{S} y \subseteq \mathcal{S} d$.
\end{enumerate}
\noindent In this case $y$ is called a {\rm left inverse of $a$ along $d$}.\par
\noindent An element $a \in \mathcal{S}$ is {\em right invertible along} 
$d \in \mathcal{S}$, if there exists $y \in \mathcal{S}$ such that
\begin{enumerate}[{\rm (i)}] \setcounter{enumi}{2}
\item $day = d$.
\item $y \mathcal{S} \subseteq d \mathcal{S}$.
\end{enumerate}
\noindent In this case $y$ is called a {\rm right inverse of $a$ along $d$}.
\end{df}

\indent Recall that given a semigroup $\mathcal{S}$ and  $a$, $d\in\mathcal{S}$, according to \cite[Corollary 2.5]{zhu},
$a$ is invertible along $d$ if and only if $a$ is left and right invertible along $d$.\par

\indent Naturally, since all the inverses that have been considered in this section up to now have been defined in semigroups
and rings, they can not be applied to matrices, unless they are square. However, to extend the aforementioned notions to arbitrary
 matrices, first it is necessary to recall the following facts. Let $U$, $V\in  \matriz{m}{n}$. There is $X\in \ce_m$ (respectively
$Y\in \ce_n$) such that $U=XV$ (respectively $U=VY$) if and only if $\kk (V)\subseteq\kk (U)$ (respectively $\rr (U)\subseteq\rr (V)$).
Now with these facts in mind, the notions in Definition \ref{def1100} and Definition~\ref{d24} can be extended to rectangular matrices.

\begin{df}\label{def_2.7}Let $A \in \matriz{n}{m}$ and $D, E \in \matriz{m}{n}$. 
\begin{enumerate}[{\rm (i)}]
\item The matrix  $A$ is said to be {\em left $(D,E)$-invertible}, if there exists $C \in \matriz{m}{n}$ 
	such that $CAD=D$ and $\kk(E) \subseteq \kk(C)$. Any matrix $C$ satisfying these conditions is
	said to be a left {\em $(D,E)$-inverse of $A$}.
\item The matrix $A$ is said to be {\em right $(D,E)$-invertible}, if there exists $B \in \matriz{m}{n}$ 
	such that $EAB=E$ and $\rr(B) \subseteq \rr(D)$. Any matrix $B$ satisfying these conditions is 
	said to be a right {\em $(D,E)$-inverse of $A$}.
\end{enumerate}
\end{df}

\indent The proofs of the following results are straightforward and they are left to the reader.

\begin{rema}\label{rema1200}\rm Let $A \in \matriz{n}{m}$ and $D, E \in \matriz{m}{n}$. The following statements hold\par
\begin{enumerate}[{\rm (i)}] 
\item The matrix $A$ is left $(D,E)$-invertible with a left inverse $C\in  \matriz{m}{n}$  if and only if
$A^*\in \matriz{m}{n}$ is right $(E^*, D^*)$-invertible and $C^*\in\matriz{n}{m}$ is a right  $(E^*, D^*)$-inverse of $A^*$.
\item The matrix $A$ is right $(D,E)$-invertible with a right inverse $B\in  \matriz{m}{n}$  if and only if
$A^*\in \matriz{m}{n}$ is left $(E^*, D^*)$-invertible and $B^*\in\matriz{m}{n}$ is a left  $(E^*, D^*)$-inverse of $A^*$.
\end{enumerate}
\noindent Consider $D', E' \in \matriz{m}{n}$ such that $\rr(D')=\rr(D)$ and $\kk(E')=\kk(E)$. The following statement holds.
\begin{enumerate}[{\rm (i)}] \setcounter{enumi}{2}
\item The matrix $A$ is left $(D, E)$-invertible if and only if it is left $(D', E')$-invertible. In addition, in this case $C\in\matriz{m}{n}$ 
is a left $(D, E)$-inverse of $A$ if and only if it is a  left $(D', E')$-inverse of $A$.
\item The matrix $A$ is right $(D, E)$-invertible if and only if it is right $(D', E')$-invertible. Moreover, in this case $B\in\matriz{m}{n}$ 
is a right $(D, E)$-inverse of $A$ if and only if it is a  right $(D', E')$-inverse of $A$.
\end{enumerate}
\end{rema}

\indent When the matrices $D$, $E\in \matriz{m}{n}$ in Definition \ref{def_2.7} coincide, the notions of left and right inverse along a matrix can 
be introduced.\par

\begin{df}\label{def_2.700}Let $A \in \matriz{n}{m}$ and $D\in \matriz{m}{n}$. 
\begin{enumerate}[{\rm (i)}]
\item The matrix  $A$ is said to be {\em left invertible along $D$}, if there exists $C \in \matriz{m}{n}$ 
	such that $CAD=D$ and $\kk(D) \subseteq \kk(C)$. Any matrix $C$ satisfying these conditions is
	said to be a {\em left inverse of $A$ along $D$}.
\item The matrix $A$ is said to be {\em right invertible along $D$}, if there exists $B \in \matriz{m}{n}$ 
	such that $DAB=D$ and $\rr(B) \subseteq \rr(D)$. Any matrix $B$ satisfying these conditions is 
	said to be a {\em right inverse of $A$ along $D$}.
\end{enumerate}
\end{df}

Note that similar results to the ones in Remark \ref{rema1200} for the case $D=E\in\matriz{m}{n}$ hold for left and right invertible
matrices along a matrix. The details are left to the reader.

Recall that given a ring $\rr$ and $a$, $b$, $c\in\rr$, in \cite[Definition 2.3]{KVC} the left and right annihilator $(b, c)$-inverses of the element $a$
were introduced. However, in the case of matrices, as under the conditions of \cite[Proposition 2.5]{KVC}, these notions coincide with the ones in 
Definition \ref{def_2.7}. 


\section{Characterizations of the one-sided $(D,E)$-invertibility}

In first place matrices that satisfy Definition \ref{def_2.7} will be characterized.\par

\begin{thm}\label{thm301} Let $A \in \matriz{n}{m}$ and $D, E \in \matriz{m}{n}$. 
The following statements are equivalent.\par
\begin{enumerate}[{\rm (i)}]
\item The matrix $A$ is right $(D,E)$-invertible.
\item $\rr(E)=\rr(EAD)$.
\item $\rk(E)=\rk(EAD)$.
\item $\dim\kk(E)=\dim\kk(EAD)$.
\item   $\ce^n=\rr(AD) + \kk(E)$.
\item  $ \ce^m=\rr(D) + \kk(EA)$ and $\rk(EA)=\rk(E)$.
\end{enumerate}
\end{thm}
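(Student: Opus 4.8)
The plan is to route every equivalence through statement (ii), after recording two ingredients used throughout. The first is the chain of inclusions
\[
\rr(EAD)\subseteq\rr(EA)\subseteq\rr(E),
\]
all three being subspaces of $\ce^m$, together with the elementary identities $\rr(EAD)=E(\rr(AD))$ and $\rr(EAD)=EA(\rr(D))$ obtained by reading off images. The second is the factorization fact recalled just before Definition \ref{def_2.7}: for $U,V\in\matriz{m}{n}$ one has $\rr(U)\subseteq\rr(V)$ if and only if $U=VY$ for some $Y\in\ce_n$. I would also keep at hand the rank-nullity relation $\rk(X)+\dim\kk(X)=n$, valid for any $X\in\matriz{m}{n}$, and the standard lemma that for a linear map $E$ and a subspace $W$ of its domain one has $E(W)=\rr(E)$ if and only if $W+\kk(E)$ equals the whole domain (proved by comparing $\dim E(W)$ with $\rk(E)$ through the dimension formula).

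The cluster (ii) $\Leftrightarrow$ (iii) $\Leftrightarrow$ (iv) is then immediate: since $\rr(EAD)\subseteq\rr(E)$, these ranges coincide exactly when their dimensions agree, which is (iii); and because $E$ and $EAD$ both have $n$ columns, rank-nullity converts the rank equality into the nullity equality (iv).

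For (i) $\Leftrightarrow$ (ii) I would apply the factorization fact directly. If $B$ is a right $(D,E)$-inverse, then $\rr(B)\subseteq\rr(D)$ yields $B=DY$, and $E=EAB=EADY$ gives $\rr(E)\subseteq\rr(EAD)$, forcing equality. Conversely, from $\rr(E)\subseteq\rr(EAD)$ write $E=(EAD)Y$ and set $B=DY$; then $EAB=E$ and $\rr(B)\subseteq\rr(D)$, so $B$ witnesses right $(D,E)$-invertibility. For (ii) $\Leftrightarrow$ (v) I would invoke the image/sum lemma with $W=\rr(AD)$: since $\rr(EAD)=E(\rr(AD))$, the equality $\rr(EAD)=\rr(E)$ is equivalent to $\rr(AD)+\kk(E)=\ce^n$, which is (v).

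The step I expect to need the most care is (ii) $\Leftrightarrow$ (vi), because (vi) bundles two separate conditions, and the point is that they correspond precisely to the two links of the chain above. Applying the same image/sum lemma to the map $EA$ on $\ce^m$ with $W=\rr(D)$ shows that $\ce^m=\rr(D)+\kk(EA)$ is equivalent to $\rr(EAD)=\rr(EA)$, while $\rk(EA)=\rk(E)$ together with $\rr(EA)\subseteq\rr(E)$ is equivalent to $\rr(EA)=\rr(E)$. Thus (vi) asserts $\rr(EAD)=\rr(EA)$ and $\rr(EA)=\rr(E)$ at once, and since the chain $\rr(EAD)\subseteq\rr(EA)\subseteq\rr(E)$ collapses to one equality exactly when both links are equalities, this is the same as $\rr(EAD)=\rr(E)$, namely (ii). The subtlety to watch is that one genuinely needs \emph{both} clauses of (vi): the rank condition controls only the outer link and the subspace-sum condition only the inner one, so neither clause implies the other in general.
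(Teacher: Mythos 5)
Your proof is correct, and on (i)$\Leftrightarrow$(ii) and the cluster (ii)$\Leftrightarrow$(iii)$\Leftrightarrow$(iv) it coincides with the paper's argument: the same factorization fact and the same rank--nullity count. The genuine difference is in how (v) and (vi) are reached. The paper argues from the witness matrix: given a right $(D,E)$-inverse $B=DM$, it decomposes $\xn = ADM\xn + (\xn - ADM\xn)$ to get (v), decomposes $\yn = BA\yn + (\yn - BA\yn)$ to get the subspace sum in (vi), obtains the rank clause of (vi) from the sandwich $\rk(E)=\rk(EAD)\le\rk(EA)\le\rk(E)$, and closes the cycles with the computations $\rr(E)=E[\rr(AD)+\kk(E)]=\rr(EAD)$ and $\rr(EA)=EA[\rr(D)+\kk(EA)]=\rr(EAD)$. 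You instead route both (v) and (vi) through (ii) by one reusable lemma ($E(W)=\rr(E)$ if and only if $W+\kk(E)$ is the whole domain), which packages the paper's one-directional computations together with their converses and removes any need for the witness $B$ outside of (i)$\Leftrightarrow$(ii). The payoff is structural: you exhibit the two clauses of (vi) as exactly the two links of the chain $\rr(EAD)\subseteq\rr(EA)\subseteq\rr(E)$, so its equivalence with (ii) is just the observation that a chain of inclusions collapses precisely when each link does; the paper leaves this point implicit, and your remark that neither clause of (vi) implies the other (so both are genuinely needed) is information the paper does not record. What the paper's witness-based route buys is directness in (i)$\Rightarrow$(v) and (i)$\Rightarrow$(vi); what yours buys is a uniform mechanism and a cleaner hub-and-spoke architecture centered on (ii).
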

\begin{proof} 
In first place, it will be proved that statement (i) implies statement (ii). 
Assume that there exists a matrix $B \in \matriz{m}{n}$ such that
$EAB=E$ and $\rr(B) \subset \rr(D)$. Recall that the latter condition
is equivalent to the fact that there exists $M \in \ce_n$ such that $B = DM$. Therefore, 
$$
E=EAB=EADM.
$$
In particular, $\rr(E)=\rr(EAD)$.\par

Suppose that statement~(ii) holds. Thus, there exists 
$X\in \ce_n$ such
that $EADX=E$. To prove statement (i), it is enough to define $B=DX$.\par

Statements (ii), (iii), and (iv) are equivalent. In fact,
since $\rr(EAD)\subseteq \rr(E)$, statements (ii) and (iii) are equivalent. In addition, since
$\dim\kk(E)+\rk(E)=n=\dim\kk(EAD)+\rk(EAD)$, statements (iii) and (iv) are equivalent. 

Statements (i) and (v) are equivalent. In fact, according to what has been proved, if statement (i) holds, then there is $M \in \ce_n$ such that $E=EADM$. In particular, 
$\rr(I_n-ADM)\subseteq \kk(E)$. 
Since  any $\xn \in \ce^n$ can be written as $\xn = ADM \xn + (\xn-ADM\xn)$,
statement (v) holds. On the other hand, statement (v) implies statement (ii), 
since $\rr(E)=E(\ce^n) = E[\rr(AD) + \kk(E)] = \rr(EAD)$.\par

In this paragraph, it will be proved that statement (i) implies statement (vi). 
Assume that statement (i) holds. Then there exists a matrix  $B \in \matriz{m}{n}$  such that $E=EAB$ and $\rr(B)\subseteq\rr(D)$.
Since  any $\yn \in \ce^m$ can be written as $\yn=BA\yn+(\yn-BA\yn)$, 
the equality $\ce^m = \rr(D)+\kk(EA)$ is obtained. To prove the rank equality,
according to statement (iii), $\rk(E) = \rk(EAD) \leq \rk(EA) \leq \rk(E)$.\par

Finally, it will be proved that statement (vi) implies statement (iii). In fact, if statement (vi) holds, then 
$$
\rr(EA) = EA (\ce^m) = EA(\rr(D)+\kk(EA)) = \rr(EAD).
$$
However, $\rk(E)=\rk(EA)=\rk(EAD)$. 
\end{proof}

\begin{thm}\label{thm302} Let $A \in \matriz{n}{m}$ and $D, E \in \matriz{m}{n}$. 
The following statements are equivalent.\par
\begin{enumerate}[{\rm (i)}]
\item The matrix $A$ is left $(D,E)$-invertible.
\item $\kk (D)=\kk (EAD)$.
\item $\dim \kk (D)=\dim\kk (EAD)$.
\item $\rk(D)=\rk(EAD)$.
\item $\kk(EA) \cap \rr(D)=0$.
\item $\rr(AD) \cap \kk(E)=0$ and  $\rk(D)=\rk(AD)$.
\end{enumerate}
\end{thm}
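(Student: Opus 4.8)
The statement is the ``left'' counterpart of Theorem \ref{thm301}, so the quickest route I would take is to deduce it from that theorem by conjugate transposition, supplementing the one condition the duality does not reach directly. By Remark \ref{rema1200}(i), $A$ is left $(D,E)$-invertible if and only if $A^*$ is right $(E^*,D^*)$-invertible; hence I would apply Theorem \ref{thm301} to the triple $(A^*,E^*,D^*)$ (so that $E^*$ plays the role of $D$ and $D^*$ that of $E$) and translate each of its six conditions back using $\rr(X^*)=\kk(X)^\perp$, $\kk(X^*)=\rr(X)^\perp$, $\rk(X^*)=\rk(X)$ and $(XY)^*=Y^*X^*$. Here one uses $E^*A^*D^*=(EAD)^*$, $A^*E^*=(EA)^*$ and $D^*A^*=(AD)^*$.

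Carrying out the dictionary: condition (ii) of Theorem \ref{thm301} for $(A^*,E^*,D^*)$ reads $\rr(D^*)=\rr((EAD)^*)$, which is $\kk(D)^\perp=\kk(EAD)^\perp$, i.e. statement (ii) here; condition (iii) becomes $\rk(D)=\rk(EAD)$, i.e. statement (iv); condition (v), namely $\ce^m=\rr((EA)^*)+\kk(D^*)$, becomes $\ce^m=\kk(EA)^\perp+\rr(D)^\perp$, and taking orthogonal complements via $(\mathcal U^\perp+\mathcal V^\perp)^\perp=\mathcal U\cap\mathcal V$ gives $\kk(EA)\cap\rr(D)=0$, i.e. statement (v); condition (vi) translates in the same manner to $\rr(AD)\cap\kk(E)=0$ together with $\rk(AD)=\rk(D)$, i.e. statement (vi). The one point requiring care is that conjugate transposition interchanges the two ambient dimensions, so condition (iv) of Theorem \ref{thm301}, $\dim\kk(E^*)=\dim\kk((EAD)^*)$, again yields only the rank equality $\rk(D)=\rk(EAD)$ and not statement (iii). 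I would therefore insert (iii) by hand, observing that $\kk(D)\subseteq\kk(EAD)$ holds automatically and that $\rk(D)+\dim\kk(D)=n=\rk(EAD)+\dim\kk(EAD)$, so that (ii), (iii) and (iv) are mutually equivalent by counting dimensions.

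For completeness I would also record the direct argument for the pivotal equivalence (i)$\Leftrightarrow$(ii), which avoids Theorem \ref{thm301} altogether. By the solvability fact recalled before Definition \ref{def_2.7}, the condition $\kk(E)\subseteq\kk(C)$ is the same as $C=NE$ for some $N\in\ce_m$, so $A$ is left $(D,E)$-invertible exactly when $NEAD=D$ is solvable in $N$, i.e. (again by that fact) exactly when $\kk(EAD)\subseteq\kk(D)$; combined with the automatic inclusion $\kk(D)\subseteq\kk(EAD)$ this is precisely (ii). The equivalences (iv)$\Leftrightarrow$(v) and (iv)$\Leftrightarrow$(vi) can likewise be obtained directly by applying the rank--nullity theorem to the restrictions $EA|_{\rr(D)}$ and $E|_{\rr(AD)}$, whose images are both $\rr(EAD)$ and whose kernels are $\rr(D)\cap\kk(EA)$ and $\rr(AD)\cap\kk(E)$ respectively, together with the chain $\rk(EAD)\le\rk(AD)\le\rk(D)$ that is needed to recover the rank equality in (vi).

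The main obstacle I anticipate is purely bookkeeping: keeping the two ambient spaces $\ce^n$ and $\ce^m$ straight under conjugate transposition, which is exactly why the dual of a nullity condition lands on a rank condition and forces (iii) to be handled separately, and applying the complementation identity to the ``sum equals the whole space'' conditions without inclusion errors. Once the dictionary $\rr(X^*)=\kk(X)^\perp$, $\kk(X^*)=\rr(X)^\perp$, $\rk(X^*)=\rk(X)$ is fixed, each translation is mechanical.
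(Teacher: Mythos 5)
Your proposal is correct and takes essentially the same route as the paper's proof: reduce to Theorem \ref{thm301} applied to $(A^*,E^*,D^*)$ via Remark \ref{rema1200} (i), translate with the identities $\rr(X^*)=\kk(X)^\perp$, $\kk(X^*)=\rr(X)^\perp$, $\rk(X^*)=\rk(X)$, and then obtain statement (iii) separately by dimension counting from the automatic inclusion $\kk(D)\subseteq\kk(EAD)$ (your direction of this inclusion is the correct one; the paper's proof misprints it as $\kk(EAD)\subseteq\kk(D)$). The only blemish is a typo in your translation of condition (iv) of Theorem \ref{thm301}, which should read $\dim\kk(D^*)=\dim\kk((EAD)^*)$ rather than $\dim\kk(E^*)=\dim\kk((EAD)^*)$; the conclusion you draw from it, namely $\rk(D)=\rk(EAD)$, matches the corrected version, and your supplementary direct arguments for (i)$\Leftrightarrow$(ii) and (iv)$\Leftrightarrow$(v)$\Leftrightarrow$(vi) are also sound.
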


\begin{proof} Recall that according to Remark \ref{rema1200} (i),  $A$ is left-$(D,E)$-invertible 
if and only if $A^*\in \matriz{m}{n}$ is right $(E^*, D^*)$-invertible ($E^*$, $D^*\in\matriz{n}{m}$).
In addition, recall that 
\begin{align*}
&\kk(D)=\rr(D^*)^\perp,& &\kk(EAD)=\rr(D^*A^*E^*)^\perp.&\\
&\kk(EA)=\rr(A^*E^*)^\perp,& &\rr(D)=\kk(D^*)^\perp.& \\
&\rr(AD)=\kk(D^*A^*)^\perp,& &\kk(E)=\rr(E^*)^\perp.&\\
&\rk(D)=\rk(D^*),& &\rk(AD)=\rk(D^*A^*).&\\
\end{align*}
To conclude the proof, apply Theorem \ref{thm301} to $A^*$, $E^*$ and $D^*$, use the above identities and
note that since $\kk(EAD)\subseteq\kk(D)$, statememts (ii) and (iii) are equivalent. In addition, note that since 
$\dim\kk(D)+\rk(D)=n=\dim\kk(EAD)+\rk(EAD)$, statements (iii) and (iv) are equivalent. 
\end{proof}

Next given $D,E \in \ce_{m,n}$, left and right $(D, E)$-invertible matrices will be characterized using a particular map. 

\begin{thm}\label{th_3.5} 
Let $A \in \ce_{n,m}$ and $D,E \in \ce_{m,n}$. Let $\mathcal{X}$ be any subspace
of $\ce^n$ such that $\ce^n = \kk(E) \oplus \mathcal{X}$. Consider
$\phi:\rr(D) \to \mathcal{X}$ the map defined by 
$\phi(\xn)=P_{\mathcal{X},\kk(E)}(A\xn)$, for $\xn \in \rr(D)$.
The following statements hold.\par
\begin{enumerate}
\item[{\rm (i)}] The matrix $A$ is left $(D,E)$-invertible if and only if $\phi$ is injective.
\item[{\rm (ii)}] The matrix $A$ is right $(D,E)$-invertible if and only if $\phi$ is surjective.
\end{enumerate}
\end{thm}
\begin{proof} First statement (i) will be proved.
Observe that $\kk( \phi) = \rr(D) \cap \kk(EA)$. Thus, according to Theorem \ref{thm302}, 
$\kk(\phi)= 0$ if and only if $A$ is $(D,E)$-left invertible.

The assertion (ii) will be proved in this paragraph. 
Note that $\xn \in \rr(EAD)$ if and only if exists $\yn \in \ce^n$ such that
$$
\xn = E[P_{\mathcal{X},\kk(E)}(AD\yn) + P_{\kk(E),\mathcal{X}}(AD \yn)].
$$
Since $E[P_{\mathcal{X},\kk(E)}(AD\yn) + P_{\kk(E),\mathcal{X}}(AD \yn)] = E(\phi(D\yn))$,
the equality $\rr(EAD)=E(\rr(\phi))$ is obtained. In addition, the linear mapping
$f:\rr(\phi) \to \ce^m$ given by $f(\xn)=E\xn$ is injective (because $\rr(\phi) \subseteq
\mathcal{X}$ and $\mathcal{X} \oplus \kk(E)=\ce^n$), 
therefore, $\dim \rr(\phi) = \dim  E(\rr(\phi)) 
= \rk(EAD)$, and thus, $\phi$ is surjective (which is equivalent to $\dim \rr(\phi) = \dim \mathcal{X}$)  if and only if $\rk(EAD)=\rk(E)$.
However, according to Theorem 3.1, 
this latter condition is equivalent to  the fact that $A$ is right $(D, E)$-invertible. 
\end{proof} 

\indent In the following theorem matrices satisfying simultaneously 
Theorem \ref{thm301} and Theorem~\ref{thm302} will be studied.\par

\begin{thm}\label{thm3033} Let $A \in \ce_{n,m}$ and $D,E \in \ce_{m,n}$. The following statements are equivalent.
\begin{enumerate}[{\rm (i)}]
\item $A$ is left and right $(D, E)$-invertible.
\item $\rr(EAD)=\rr(E)$ and $\kk(EAD)=\kk(D)$.
\item $\rr(AD) \oplus \kk(E) = \ce^n$ and $\rk(D)=\rk(AD)$.
\item $\rr(D) \oplus \kk(EA) = \ce^m$ and $\rk(E)=\rk(EA)$.
\item The map $\phi:\rr(D) \to \mathcal{X}$ 
defined in Theorem \ref{th_3.5} is bijective.
\end{enumerate}
\noindent Futhermore, in this case, $\rk(E)=\rk(D)$.
\end{thm}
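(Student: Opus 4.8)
The plan is to obtain every equivalence by simply conjoining the appropriate characterization of right $(D,E)$-invertibility from Theorem \ref{thm301} with the matching characterization of left $(D,E)$-invertibility from Theorem \ref{thm302} (and Theorem \ref{th_3.5} for statement (v)), since statement (i) is by definition exactly the conjunction of these two one-sided properties. The guiding principle is that the right-invertibility conditions always supply a spanning statement (a sum equal to the whole space), while the left-invertibility conditions supply the complementary independence statement (an intersection that is trivial); conjoining a spanning condition with an independence condition upgrades an ordinary sum to a direct sum.

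First I would treat (i)$\Leftrightarrow$(ii). By Theorem \ref{thm301}(ii), right $(D,E)$-invertibility is equivalent to $\rr(EAD)=\rr(E)$, and by Theorem \ref{thm302}(ii), left $(D,E)$-invertibility is equivalent to $\kk(EAD)=\kk(D)$. Hence (i) holds if and only if both equalities hold, which is precisely (ii). Next, for (i)$\Leftrightarrow$(iii) I would invoke Theorem \ref{thm301}(v), by which right invertibility amounts to $\ce^n=\rr(AD)+\kk(E)$, together with Theorem \ref{thm302}(vi), by which left invertibility amounts to $\rr(AD)\cap\kk(E)=0$ and $\rk(D)=\rk(AD)$. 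Merging the sum with the trivial-intersection condition yields the direct sum $\rr(AD)\oplus\kk(E)=\ce^n$, and retaining the rank equality gives (iii).

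Symmetrically, (i)$\Leftrightarrow$(iv) follows from Theorem \ref{thm301}(vi), which says right invertibility is equivalent to $\ce^m=\rr(D)+\kk(EA)$ together with $\rk(E)=\rk(EA)$, combined with Theorem \ref{thm302}(v), which says left invertibility is equivalent to $\kk(EA)\cap\rr(D)=0$; the sum and intersection conditions again assemble into the direct sum $\rr(D)\oplus\kk(EA)=\ce^m$. The equivalence (i)$\Leftrightarrow$(v) is immediate from Theorem \ref{th_3.5}, since there left invertibility is injectivity of $\phi$ and right invertibility is surjectivity of $\phi$, so the conjunction is precisely bijectivity.

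Finally, I would read the rank equality $\rk(E)=\rk(D)$ directly off one of the direct-sum decompositions. From (iii), the direct sum $\rr(AD)\oplus\kk(E)=\ce^n$ forces $\dim\rr(AD)=n-\dim\kk(E)=\rk(E)$, and since $\rk(D)=\rk(AD)$ this gives $\rk(D)=\rk(E)$. I do not anticipate any genuine obstacle, as all the substantive work is already discharged in Theorems \ref{thm301}, \ref{thm302} and \ref{th_3.5}; the only point requiring care is pairing each one-sided condition with its correct counterpart so that the spanning and independence conditions combine into a direct sum rather than being stated redundantly.
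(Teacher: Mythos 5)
Your proposal is correct and follows essentially the same route as the paper, which proves the theorem precisely by combining the one-sided characterizations of Theorems \ref{thm301}, \ref{thm302} and \ref{th_3.5}; your pairing of spanning conditions with trivial-intersection conditions to form the direct sums is exactly the intended argument, just spelled out in more detail than the paper's one-line proof. The only cosmetic difference is the final rank equality: you deduce $\rk(D)=\rk(E)$ from the direct sum in (iii), while the paper counts dimensions via $n=\rk(EAD)+\dim\kk(EAD)=\rk(E)+\dim\kk(D)$ using (ii); both are valid.
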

\begin{proof} Apply Theorem \ref{thm301}, Theorem \ref{thm302} and Theorem \ref{th_3.5}. 
Note also that $\rk(E)=\rk(D)$. Actually, this equality 
can be derived from the fact that $n = \rk(EAD)+ \dim \kk (EAD)=\rk(E)+\dim \kk(D)$.
\end{proof}

Next the left and right $(D, E)$-inverses of a matrix $A$ satisfying Theorem \ref{thm3033} will be characterized.

\begin{pro}\label{pro3034} Let $A \in \ce_{n,m}$ and $D,E \in \ce_{m,n}$ be such that $A$ is both left and right 
$(D, E)$-invertible. Then, there exist only one left $(D, E)$-inverse of $A$ and only one right $(D, E)$-inverse of $A$. Moreover,
these inverses coincide with the 
unique matrix $R\in\ce_{m, n}$ satisfying 
$$
\kk(R) = \kk(E), \qquad R \yn = f^{-1}(\yn), \ \ \forall \yn \in \rr(AD),
$$  
where $f:\rr(D) \to \rr(AD)$ is the isomorphism defined by $f(\xn) = A  \xn$.
\end{pro}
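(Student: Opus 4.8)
The plan is to work entirely with the direct-sum decomposition supplied by Theorem \ref{thm3033}. Since $A$ is both left and right $(D,E)$-invertible, part (iii) of that theorem gives $\ce^n = \rr(AD) \oplus \kk(E)$ together with $\rk(D) = \rk(AD)$. First I would check that the map $f : \rr(D) \to \rr(AD)$, $f(\xn) = A\xn$, is indeed an isomorphism: it is surjective because $\rr(AD) = AD(\ce^n) = A(\rr(D))$, and since $\rk(D) = \rk(AD)$ means $\dim \rr(D) = \dim \rr(AD)$, a surjection between spaces of equal finite dimension is bijective. Hence $f^{-1} : \rr(AD) \to \rr(D)$ exists.

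Next I would define $R$ by prescribing it on each summand of $\ce^n = \rr(AD) \oplus \kk(E)$: set $R\yn = f^{-1}(\yn)$ for $\yn \in \rr(AD)$ and $R\vn = \on$ for $\vn \in \kk(E)$. This determines a unique linear map $\ce^n \to \ce^m$, i.e.\ a matrix $R \in \ce_{m,n}$, and it manifestly satisfies the two displayed conditions of the statement. From the construction $\rr(R) \subseteq \rr(D)$, since all values of $R$ lie in $\rr(D)$; and $\kk(R) = \kk(E)$, because the inclusion $\kk(E) \subseteq \kk(R)$ is immediate, while if $R\xn = \on$, writing $\xn = \yn + \vn$ with $\yn \in \rr(AD)$ and $\vn \in \kk(E)$ gives $f^{-1}(\yn) = \on$, so $\yn = \on$ and thus $\xn \in \kk(E)$.

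Then I would verify that $R$ is simultaneously a left and a right $(D,E)$-inverse. For the left side, evaluating on an arbitrary $\xn \in \ce^n$ gives $RAD\xn = f^{-1}(AD\xn) = f^{-1}(f(D\xn)) = D\xn$, because $AD\xn \in \rr(AD)$ and $D\xn \in \rr(D)$; hence $RAD = D$, and $\kk(E) \subseteq \kk(R)$ holds by the previous paragraph. For the right side, decomposing $\xn = \yn + \vn$ as above gives $AR\xn = A f^{-1}(\yn) = \yn$, so $EAR\xn = E\yn = E\xn$ (the last equality since $\vn \in \kk(E)$); hence $EAR = E$, and $\rr(R) \subseteq \rr(D)$ is already known.

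Finally, for uniqueness I would exploit that both inverses are pinned down on the two summands. If $C$ is any left $(D,E)$-inverse, then $\kk(E) \subseteq \kk(C)$ forces $C$ and $R$ to agree (both zero) on $\kk(E)$, while $CAD = D$ gives $C(AD\xn) = D\xn = R(AD\xn)$ on $\rr(AD)$, so $C = R$. The main obstacle is the uniqueness of the right inverse, where the defining data $EAB = E$ and $\rr(B) \subseteq \rr(D)$ do not directly prescribe $B$ on the summands. The key step will be to show, for $\xn = \yn + \vn$, that $AB\xn = \yn$: indeed $B\xn \in \rr(D)$ forces $AB\xn \in \rr(AD)$, while $EAB = E$ forces $AB\xn - \xn \in \kk(E)$; subtracting $\yn$ and using $\rr(AD) \cap \kk(E) = 0$ collapses the $\kk(E)$-part to zero, yielding $AB\xn = \yn$. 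Since $B\xn \in \rr(D)$, this reads $f(B\xn) = \yn$, whence $B\xn = f^{-1}(\yn) = R\xn$ and therefore $B = R$.
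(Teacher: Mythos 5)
Your proof is correct, and while it rests on the same two pillars as the paper's --- the decomposition $\ce^n = \rr(AD) \oplus \kk(E)$ from Theorem~\ref{thm3033}~(iii) and the isomorphism $f\colon \rr(D) \to \rr(AD)$ --- it is organized genuinely differently in two respects. First, you construct $R$ explicitly on the two summands and then verify that it is simultaneously a left and a right $(D,E)$-inverse; the paper never does this, since it only argues that any left inverse $C$ and any right inverse $B$ (whose existence is the hypothesis) are forced to satisfy the displayed conditions. Your version is therefore more self-contained: it re-proves existence, and in effect already yields the nontrivial direction of Theorem~\ref{thm3036}. Second, your uniqueness argument for the right inverse works entirely inside $\ce^n$ with the single decomposition $\rr(AD) \oplus \kk(E)$: from $\rr(B)\subseteq\rr(D)$ and $EAB=E$ you place $AB\xn$ in $\rr(AD)$ and $AB\xn-\xn$ in $\kk(E)$, and directness of the sum gives $AB\xn=\yn$, hence $B\xn=f^{-1}(\yn)=R\xn$. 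The paper instead first proves $\kk(B)=\kk(E)$ by invoking the second decomposition $\ce^m = \rr(D)\oplus\kk(EA)$ (Theorem~\ref{thm3033}~(iv)), and then identifies the action of $B$ on $\rr(AD)$ with $f^{-1}$ through an auxiliary map $k$ given by the action of $E$, cancelled in the identity $kfh=k$. Your route is leaner --- one decomposition instead of two, no auxiliary isomorphism --- and it also sidesteps a small defect in the paper's text: there $k$ is declared with domain $\rr(D)$, a subspace of $\ce^m$ on which $E$ does not even act when $m\neq n$; for the paper's argument to parse, $k$ must be defined on $\rr(AD)$, its injectivity ($\kk(k)=\rr(AD)\cap\kk(E)=0$) being exactly Theorem~\ref{thm3033}~(iii).
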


\begin{proof}
Consider $C\in\ce_{m, n}$ a left $(D, E)$-inverse of $A$. Then, $CAD=D$ and $\kk(E)\subseteq\kk(C)$. 
Let $f:\rr(D) \to \rr(AD)$ and $g:\rr(AD) \to \rr(D)$ be given by 
$f(\xn)=A\xn$ and $g(\yn)=C\yn$. If $\xn \in \rr(D)$, then $gf(\xn)=CA\xn$
and $\xn = D \un$ for some $\un \in \ce^n$. From $CAD=D$, it is obtained that
$gf(\xn)=\xn$. In a similar way, $fg = I_{\rr(AD)}$ can be proved, and therefore, 
$g=f^{-1}$.

In this paragraph it  will be proved that $\kk(C)=\kk(E)$. Since  $\kk(E) \subseteq \kk(C)$
is already known, it is enough to prove the opposite inclusion. Let $\xn \in \kk(C)$, 
by Theorem 3.4 (iii), $\xn$ can be written as $\xn = AD \yn + \wn$, where $\yn \in \ce^n$
and $\wn \in \kk(E)$. Now, $\on = C\xn = CAD \yn + C \wn = D \yn$ because
$\wn \in \kk(E) \subseteq \kk(C)$. Finally, $\xn = AD\yn + \wn = \wn \in  \kk(E)$.

\indent Now consider $B\in \ce_{m,n}$ a right $(D, E)$-inverse of $A$. In particular, $EAB=E$ and $\rr(B)\subseteq \rr(D)$.
Let $x\in\kk(E)$. Then, $B(x)\in\rr(D)\cap\kk(EA)=0$ (Theorem \ref{thm3033} (iv)). Thus, $\kk(E)\subseteq \kk(B)$.
The inclusion $\kk(B) \subseteq \kk(E)$ is evident from $EAB=E$. Therefore, $\kk(B)=\kk(E)$.\par
\indent Let $h:\rr(AD) \to \rr(D)$ and $k: \rr(D) \to \rr(E)$
defined by $h(\yn)=B\yn$ and $k(\yn)=E\yn$. The mapping $k$ is an isomorphism
because it is simple to prove in view of Theorem 3.4 that $\kk(k)=0$.
Furthermore, $EAB=E$ leads to $kfh=k$, and using that $k$ is an isomorphism, 
$fh=I_{\rr(AD)}$, i.e., $h=f^{-1}$.
\end{proof}

\indent Now the relationship between the notions introduced in Definitions \ref{def_2.700} will be studied. To this end, in first place a characterization of left invertibility along a matrix will be given.

\begin{thm}\label{thm3038} Let $A \in \ce_{n,m}$ and $D\in \ce_{m,n}$. The following statements are equivalent.\par
\begin{enumerate}[{\rm (i)}]
\item $A$ is right invertible along $D$.
\item $\rr(D)=\rr(DAD)$.
\item $\rk(D)=\rk(DAD)$.
\item $\ce^n=\kk(D)\oplus\rr(AD)$.
\item $\ce^m=\rr(D)\oplus\kk(DA)$.
\end{enumerate}
\end{thm}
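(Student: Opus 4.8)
The plan is to recognize that this theorem is exactly the specialization $E=D$ of Theorem~\ref{thm301}, with the two sum decompositions appearing there upgraded to direct sums. Indeed, right invertibility along $D$ (Definition~\ref{def_2.700}) is nothing but right $(D,D)$-invertibility, so statements (i), (ii), (iii) here are verbatim statements (i), (ii), (iii) of Theorem~\ref{thm301} with $E$ replaced by $D$, and their equivalence is immediate. Theorem~\ref{thm301} also supplies, as further equivalent conditions, $\ce^n=\rr(AD)+\kk(D)$ (its statement (v)) and the conjunction ``$\ce^m=\rr(D)+\kk(DA)$ and $\rk(DA)=\rk(D)$'' (its statement (vi)). Thus the only genuinely new content is to show that, in the present situation, these sums are direct.

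First I would extract from condition (iii) the two rank equalities $\rk(AD)=\rk(D)$ and $\rk(DA)=\rk(D)$. Since $\rr(AD)=A(\rr(D))$ one has $\rk(AD)\le\rk(D)$, and since $DAD=D(AD)$ one has $\rk(DAD)\le\rk(AD)$; combining these with (iii) gives the chain
$$
\rk(D)=\rk(DAD)\le\rk(AD)\le\rk(D),
$$
which forces $\rk(AD)=\rk(D)$. The symmetric chain $\rk(D)=\rk(DAD)\le\rk(DA)\le\rk(D)$, using $\rr(DA)\subseteq\rr(D)$ and $DAD=(DA)D$, yields $\rk(DA)=\rk(D)$.

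With these equalities in hand, the sums become direct by a dimension count. For statement (iv), from $\rk(AD)=\rk(D)$ I get $\dim\rr(AD)+\dim\kk(D)=\rk(AD)+(n-\rk(D))=n$; together with the decomposition $\ce^n=\rr(AD)+\kk(D)$ this forces $\rr(AD)\cap\kk(D)=0$, i.e. the direct sum $\ce^n=\kk(D)\oplus\rr(AD)$. For statement (v), $\rk(DA)=\rk(D)$ gives $\dim\rr(D)+\dim\kk(DA)=\rk(D)+(m-\rk(D))=m$, so $\ce^m=\rr(D)+\kk(DA)$ is in fact the direct sum $\ce^m=\rr(D)\oplus\kk(DA)$. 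The converses are immediate: a direct sum entails the corresponding plain sum, and in the case of (v) the dimension equality forced by the direct sum recovers the extra rank condition $\rk(DA)=\rk(D)$ needed to invoke statement (vi) of Theorem~\ref{thm301}. Hence (iv) and (v) each return us to (i).

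The proof is essentially bookkeeping on top of Theorem~\ref{thm301}, so no step is a serious obstacle; the only point requiring care is to justify the rank chains correctly (that $\rk(AD)\le\rk(D)$ and $\rk(DAD)\le\rk(AD)$, and their transposed analogues) and to remember that a sum of two subspaces whose dimensions add up to that of the ambient space is automatically direct, which is precisely what turns the sums of Theorem~\ref{thm301} into the direct sums claimed here.
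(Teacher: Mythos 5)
Your proof is correct and follows essentially the same route as the paper: specialize Theorem~\ref{thm301} to the case $E=D$, and then upgrade its two sum decompositions $\ce^n=\rr(AD)+\kk(D)$ and $\ce^m=\rr(D)+\kk(DA)$ to direct sums by a dimension count, with the converses obtained by noting that each direct sum trivially yields the corresponding condition of Theorem~\ref{thm301} (plus, for (v), the recovered rank equality $\rk(DA)=\rk(D)$). If anything, you are slightly more careful than the paper, whose dimension count uses $\rk(AD)=\rk(D)$ without spelling out the chain $\rk(D)=\rk(DAD)\le\rk(AD)\le\rk(D)$ that you make explicit.
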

\begin{proof} Suppose that statement (i) holds. Then, according to Theorem \ref{thm301} applied to the case $D=E$,
statements (ii) and (iii) hold, $\rk(DA)=\rk(D)$,   $\ce^n=\kk(D)+\rr(AD)$ and  $\ce^m=\rr(D)+\kk(DA)$. Now, since
\begin{equation*}
\begin{split}
n&=\rk(AD)+\dim \kk(D)-\dim [\rr(AD)\cap\kk(D)]\\
&=\rk(D)+\dim \kk(D)-\dim [\rr(AD)\cap\kk(D)],\\
\end{split}
\end{equation*}
\noindent $ \rr(AD)\cap\kk(D)=0$ and statement (iv) holds.\par
\indent Similarly, since
\begin{equation*}
\begin{split}
m&=\rk(D)+\dim \kk(DA)-\dim [\rr(D)\cap\kk(DA)]\\
&=\rk(DA)+\dim \kk(DA)-\dim [\rr(D)\cap\kk(DA)],\\
\end{split}
\end{equation*}
\noindent $ \rr(D)\cap\kk(DA)=0$ and statement (v) holds.\par
\indent On the other hand, note that statement (ii) (respectively (iii), (iv), (v)) implies 
statement (ii) (respectively (iii), (iv), (v)) of  Theorem \ref{thm301} applied to the case $D=E$. For statement (v), note also that
since $\ce^m=\rr(D)\oplus\kk(DA)$, 
the equality $\rk(D)=\rk(DA)$ can be obtained.
\end{proof}

\indent It is possible to obtain similar statements for right invertible elements along a matrix, however,
as the following theorem shows, left and right inverse along a matrix are equivalent notions.

\begin{thm}\label{thm3039} Let $A \in \ce_{n,m}$ and $D\in \ce_{m,n}$. The following statements are equivalent.\par
\begin{enumerate}[{\rm (i)}]
\item $A$ is right invertible along $D$.
\item $A$ is left invertible along $D$.
\item $\kk(D)=\kk(DAD)$.
\item $\dim \kk(D)=\dim \kk(DAD)$.
\item $AD\in\ce_n$ is group invertible and $\dim \kk(D)=\dim\kk(AD)$.
\item $DA\in \ce_m$ is group invertible and $\rk(DA)=\rk (D)$.
\item The map $\phi:\rr(D) \to \mathcal{X}$ 
defined in Theorem \ref{th_3.5} for the case $D=E$ is bijective.
\end{enumerate}
\end{thm}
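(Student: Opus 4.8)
The plan is to exploit the fact that, in contrast with the general $(D,E)$ situation treated in Theorems~\ref{thm301} and~\ref{thm302}, when $D=E$ the left and right conditions collapse onto the single rank identity $\rk(D)=\rk(DAD)$; once this is recognised, every item on the list can be read off from results already proved. First I would record the elementary facts that hold for arbitrary $A$ and $D$: the inclusions $\kk(D)\subseteq\kk(DAD)$ and $\rr(DAD)\subseteq\rr(D)$, together with the rank chains $\rk(DAD)\le\rk(AD)\le\rk(D)$ and $\rk(DAD)\le\rk(DA)\le\rk(D)$ (each step being a matrix product). Combined with rank--nullity, $\rk(D)+\dim\kk(D)=n$, these show that the three conditions $\rk(D)=\rk(DAD)$, $\kk(D)=\kk(DAD)$ and $\dim\kk(D)=\dim\kk(DAD)$ are mutually equivalent, which settles (iii) $\Leftrightarrow$ (iv) and ties them to the rank identity.

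Next I would dispatch (i) and (ii). Theorem~\ref{thm3038} already gives (i) $\Leftrightarrow$ $\rk(D)=\rk(DAD)$, hence (i) $\Leftrightarrow$ (iii) $\Leftrightarrow$ (iv) by the previous paragraph. For (ii) I would apply Theorem~\ref{thm302} in the special case $E=D$: its statement (ii) then reads $\kk(D)=\kk(DAD)$, so left invertibility along $D$ is precisely condition (iii). Thus (i), (ii), (iii), (iv) are all equivalent, and in particular left and right invertibility along $D$ coincide.

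For (v) and (vi) I would return to the direct-sum decompositions supplied by Theorem~\ref{thm3038}, namely $\ce^n=\kk(D)\oplus\rr(AD)$ and $\ce^m=\rr(D)\oplus\kk(DA)$, both equivalent to (i). Under (i) the rank chain forces $\rk(AD)=\rk(D)$, hence $\kk(AD)=\kk(D)$ (using $\kk(D)\subseteq\kk(AD)$), so the first decomposition becomes $\ce^n=\rr(AD)\oplus\kk(AD)$, which is exactly the index-one condition characterising group invertibility of the square matrix $AD$; and $\dim\kk(D)=\dim\kk(AD)$ holds automatically, giving (v). The converse runs backwards: group invertibility yields $\ce^n=\rr(AD)\oplus\kk(AD)$, and $\dim\kk(D)=\dim\kk(AD)$ upgrades $\kk(D)\subseteq\kk(AD)$ to an equality, recovering Theorem~\ref{thm3038}(iv). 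The argument for (vi) is the mirror image, using $\rr(DA)\subseteq\rr(D)$ and $\rk(DAD)\le\rk(DA)\le\rk(D)$ to pass between $\ce^m=\rr(D)\oplus\kk(DA)$ and the group invertibility of $DA$.

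Finally, (vii) follows almost formally. Since $\ce^n=\kk(D)\oplus\X$, one has $\dim\X=n-\dim\kk(D)=\rk(D)=\dim\rr(D)$, so $\phi$ is a linear map between spaces of equal finite dimension; consequently $\phi$ is bijective if and only if it is injective if and only if it is surjective. By Theorem~\ref{th_3.5} applied with $E=D$, injectivity of $\phi$ is equivalent to (ii) and surjectivity to (i), both of which have already been identified with one another, so (vii) $\Leftrightarrow$ (i). I expect the only delicate bookkeeping to occur in steps (v) and (vi): one must keep straight that group invertibility of a square matrix $M$ amounts to the splitting $\ce^k=\rr(M)\oplus\kk(M)$, and must verify that the rank chains genuinely force the required range/kernel coincidences ($\kk(AD)=\kk(D)$ and $\rr(DA)=\rr(D)$) before the decompositions of Theorem~\ref{thm3038} can be matched up.
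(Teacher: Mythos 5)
Your proposal is correct and follows essentially the same route as the paper's proof: Theorem~\ref{thm302} (with $E=D$) for (ii)$\Leftrightarrow$(iii)$\Leftrightarrow$(iv), Theorem~\ref{thm3038} to tie in (i), the decompositions $\ce^n=\kk(D)\oplus\rr(AD)$ and $\ce^m=\rr(D)\oplus\kk(DA)$ upgraded to $\rr\oplus\kk$ splittings for (v) and (vi), and Theorem~\ref{th_3.5} (equivalently Theorem~\ref{thm3033}) for (vii). The only differences are cosmetic—you derive $\rk(AD)=\rk(D)$ from the rank chain rather than citing Theorem~\ref{thm302}~(vi), and you justify (vii) with an explicit dimension count instead of invoking Theorem~\ref{thm3033} directly.
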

\begin{proof}According to Theorem \ref{thm302} applied to the case $D=E$, statements (ii),
(iii) and (iv) are equivalent.
In addition, note that statement (iv) is equivalent to Theorem \ref{thm3038} (iii). In particular, statements (i) and (ii) are equivalent. \par

\indent Suppose that statement (i) holds. Then according to Theorem \ref{thm3038} (iv), $\ce^n=\kk(D)\oplus\rr(AD)$. Moreover, according to Theorem \ref{thm302} (v),
$\rk(D)=\rk(AD)$. However, the latter identity is equivalent to $\rr(D)=\rr (AD)$, which in turn is equivalent to $\kk(D)=\kk(AD)$. In particular, $\ce^n=\kk(AD)\oplus\rr(AD)$, i.e.,
$AD$ is group invertible,  and $\dim \kk(D)=\dim\kk(AD)$. 

On the other hand, if statement (v) holds, then   $\ce^n=\kk(AD)\oplus\rr(AD)$ and $\kk(D)= \kk(AD)$ ($\kk(AD)\subseteq \kk(D)$).
Consequently, Theorem \ref{thm3038} (iv) holds.\par

\indent The equivalence between statements (i) and (vi) can be proved a similar argument, using in particular Theorem \ref{thm3038} (v) and Theorem \ref{thm301} (vi).\par
\indent Since statements (i) and (ii) are equivalent, according to Theorem \ref{thm3033}, statement (i) and (vii) are equivalent.
\end{proof}

In the following corollary the left and the right inverses of a matrix $A\in \ce_{n,m}$ that is left or right invertible along $D\in  \ce_{m,n}$ will be presented.

\begin{cor}\label{cor31000} Let $A \in \ce_{n,m}$ and $D\in \ce_{m,n}$ such that $A$ is  left or right invertible along $D$. 
Then, there exists only one left inverse of $A$ along $D$ and only one right inverse of $A$ along $D$. Moreover,
these inverses coincide with the matrix $R\in\ce_{m, n}$ satisfying
$$
\kk(R)=\kk(D), \qquad R \yn = f^{-1}(\yn), \ \ \forall \yn \in \rr(AD),
$$  
where $f: \rr(D) \to \rr(AD)$ is given by $f(\xn)=A\xn$.
\end{cor}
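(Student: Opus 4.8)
The plan is to reduce the entire statement to Proposition \ref{pro3034} by taking $E = D$. The first preliminary observation is that the hypothesis "$A$ is left \emph{or} right invertible along $D$" is in fact equivalent to "$A$ is \emph{both} left and right invertible along $D$": this is exactly the content of the equivalence of statements (i) and (ii) in Theorem \ref{thm3039}. The second preliminary observation is purely a matter of matching definitions. Specializing Definition \ref{def_2.7} to $E = D$, a left $(D,D)$-inverse is a matrix $C$ with $CAD = D$ and $\kk(D) \subseteq \kk(C)$, which is precisely a left inverse of $A$ along $D$ in the sense of Definition \ref{def_2.700}; symmetrically, a right $(D,D)$-inverse is precisely a right inverse of $A$ along $D$. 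Hence the hypothesis guarantees that $A$ is both left and right $(D,D)$-invertible.

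With these two reductions in place, Proposition \ref{pro3034} applies verbatim with $E$ replaced by $D$. It delivers simultaneously the uniqueness of the left $(D,D)$-inverse and of the right $(D,D)$-inverse of $A$ — that is, of the left and right inverses of $A$ along $D$ — and the assertion that both coincide with the single matrix $R \in \ce_{m,n}$ determined by $\kk(R) = \kk(E) = \kk(D)$ together with $R\yn = f^{-1}(\yn)$ for every $\yn \in \rr(AD)$, where $f : \rr(D) \to \rr(AD)$ is given by $f(\xn) = A\xn$. Since $\kk(E) = \kk(D)$ under our specialization, this is exactly the description claimed in the corollary, so nothing further needs to be computed.

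There is essentially no computational obstacle here; the only point that deserves a word is why $f : \rr(D) \to \rr(AD)$ is an isomorphism, so that $f^{-1}$ is meaningful. This is already part of what Proposition \ref{pro3034} supplies, but it can equally be seen directly: $f$ is surjective onto $\rr(AD)$ by the very definition of $\rr(AD)$, while $\rk(D) = \rk(AD)$ is available from Theorem \ref{thm3038} (equivalently from the equivalences collected in Theorem \ref{thm3039}), whence $\dim \rr(D) = \dim \rr(AD)$ and $f$ is bijective. The genuine care the argument requires is thus confined to the bookkeeping of identifying Definition \ref{def_2.700} with the $E = D$ case of Definition \ref{def_2.7} and to invoking the left-right equivalence of Theorem \ref{thm3039}; once these are recorded, the corollary is an immediate corollary of Proposition \ref{pro3034}.
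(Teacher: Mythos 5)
Your proposal is correct and follows exactly the paper's own route: the paper's proof is the one-line instruction ``Apply Theorem \ref{thm3039} and Proposition \ref{pro3034}'', which is precisely your reduction --- use Theorem \ref{thm3039} to upgrade ``left or right invertible along $D$'' to ``both'', identify inverses along $D$ with $(D,D)$-inverses, and then invoke Proposition \ref{pro3034} with $E=D$. Your additional remarks (the definitional matching and the direct verification that $f$ is an isomorphism) merely make explicit what the paper leaves implicit.
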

\begin{proof} Apply Theorem \ref{thm3039}  and Proposition \ref{pro3034}.
\end{proof}

\indent Now the existence of left  and right $(D, E)$-inverses will be studied. To this end the sets of left and right $(D, E)$-invertible matrices will be characterized. First of all some notation will be given.

\indent Consider  $D$, $E\in \matriz{m}{n}$. Let $(\matriz{n}{m})^{\parallel D, E}_{left}$ and    $(\matriz{n}{m})^{\parallel D, E}_{right}$
be the sets of left and right $(D, E)$-invertible matrices, respectively, i.e.,
\begin{align*}
&(\matriz{n}{m})^{\parallel D, E}_{left}=\{ A\in\matriz{n}{m}\colon A \hbox{ is left } (D, E)\hbox{-invertible}\},\\
&(\matriz{n}{m})^{\parallel D, E}_{right}=\{ A\in\matriz{n}{m}\colon A \hbox{ is right } (D, E)\hbox{-invertible}\}.\\
\end{align*}

\indent When $D=E\in \matriz{m}{n}$, the sets of left and right invertible matrices along  $D$,  
$(\matriz{n}{m})^{\parallel D}_{left}$ and    $(\matriz{n}{m})^{\parallel D}_{right}$ respectively, are introduced.

\begin{equation*}
\begin{split}
&(\matriz{n}{m})^{\parallel D}_{left}=(\matriz{n}{m})^{\parallel D, D}_{left}=\{ A\in\matriz{n}{m}\colon A \hbox{ is left invertible along } D\},\\
&(\matriz{n}{m})^{\parallel D}_{right}=(\matriz{n}{m})^{\parallel D, D}_{right}=\{ A\in\matriz{n}{m}\colon A \hbox{ is right invertible along } D\}.\\
\end{split}
\end{equation*}

Next conditions under which the sets $(\matriz{n}{m})^{\parallel D, E}_{left}$ and $(\matriz{n}{m})^{\parallel D, E}_{right}$ 
are non empty will be given.

\begin{thm}\label{thm303} Let $D, E \in \matriz{m}{n}$. The following statements are equivalent.\par
\begin{enumerate}[{\rm (i)}]
\item $(\matriz{n}{m})^{\parallel D, E}_{left}\ne\emptyset$. 
\item $\rk(D)\le\rk(E)$.
\item $\dim \kk(E)\le\dim \kk(D)$.
\item $\dim \kk(E)+\rk(D)\le n$.
\end{enumerate}
\end{thm}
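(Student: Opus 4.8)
The plan is to route everything through the rank inequality $\rk(D)\le\rk(E)$ and to isolate the constructive part. First I would dispose of the equivalences among (ii), (iii) and (iv) by pure counting. Since $D,E\in\matriz{m}{n}$ both act on $\ce^n$, the rank--nullity identity recalled in Section~1 gives $\dim\kk(D)=n-\rk(D)$ and $\dim\kk(E)=n-\rk(E)$. Substituting the second of these into (iii) turns it into $n-\rk(E)\le n-\rk(D)$, and substituting it into (iv) turns it into $(n-\rk(E))+\rk(D)\le n$; both reduce at once to $\rk(D)\le\rk(E)$, which is exactly (ii). So (ii) $\Leftrightarrow$ (iii) $\Leftrightarrow$ (iv) requires no further work, and it remains only to link (i) with (ii).

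Next I would prove (i) $\Rightarrow$ (ii). If some $A\in\matriz{n}{m}$ is left $(D,E)$-invertible, then by Theorem~\ref{thm302} we have $\rk(D)=\rk(EAD)$. Writing $EAD=E(AD)$ as a product and using that the rank of a product never exceeds the rank of the left factor, we get $\rk(EAD)\le\rk(E)$. Combining the two yields $\rk(D)\le\rk(E)$, which is statement (ii).

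The substance of the theorem is the reverse implication (ii) $\Rightarrow$ (i), where a witness $A$ must be built from scratch. Put $r=\rk(D)$, so that $\rr(D)\subseteq\ce^m$ has dimension $r$. Because $\dim\kk(E)=n-\rk(E)\le n-r$, there is an $r$-dimensional subspace $W\subseteq\ce^n$ with $W\cap\kk(E)=0$; concretely one takes $W$ to be an $r$-dimensional subspace of any algebraic complement of $\kk(E)$ in $\ce^n$. I would then choose a basis $\dn_1,\dots,\dn_r$ of $\rr(D)$, extend it to a basis $\dn_1,\dots,\dn_m$ of $\ce^m$, choose a basis $\wn_1,\dots,\wn_r$ of $W$, and define the linear map $A:\ce^m\to\ce^n$ by $A\dn_i=\wn_i$ for $i\le r$ and $A\dn_j=\on$ for $j>r$. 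Then $\rr(AD)=A(\rr(D))=W$, and since $W\cap\kk(E)=0$ the restriction of $E$ to $W$ is injective, so $\rr(EAD)=E(W)$ has dimension $r$. Hence $\rk(EAD)=r=\rk(D)$, and Theorem~\ref{thm302} certifies that this $A$ is left $(D,E)$-invertible, so the set in (i) is non-empty.

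I expect the only delicate point to be the existence of the subspace $W$ meeting $\kk(E)$ trivially: this is precisely where the inequality $\rk(D)\le\rk(E)$ enters, since it is equivalent to $r+\dim\kk(E)\le n$, the dimension budget that permits an $r$-dimensional subspace to avoid $\kk(E)$. Everything else is routine linear algebra once the characterization of Theorem~\ref{thm302} is in hand.
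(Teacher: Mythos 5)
Your proof is correct, and your witness is at bottom the same one the paper builds: a rank-$r$ map carrying a basis of $\rr(D)$ onto a basis of an $r$-dimensional subspace $W\subseteq\ce^n$ meeting $\kk(E)$ trivially, and annihilating a complement of $\rr(D)$ (the paper packages this as $A = X\left[\begin{smallmatrix} I_s & 0\\ 0 & 0\end{smallmatrix}\right]Y^{-1}$, where the first $s$ columns of $Y$ span $\rr(D)$ and the last $n-r$ columns of $X$ span $\kk(E)$). Where you genuinely diverge is in how left invertibility is certified. The paper never invokes Theorem \ref{thm302} in this proof: it works straight from Definition \ref{def_2.7}, constructing the left inverse explicitly as $C = Y\left[\begin{smallmatrix} I_s & 0\\ 0 & 0\end{smallmatrix}\right]X^{-1}$ and checking $CAD=D$ and $\kk(E)\subseteq\kk(C)$ by hand; likewise its proof of (i)$\Rightarrow$(ii) uses the inverse $C$ itself, via $CAD=D \Rightarrow \rk(D)\le\rk(C)$ and $\kk(E)\subseteq\kk(C) \Rightarrow \rk(C)\le\rk(E)$. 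You instead route both nontrivial directions through the rank criterion of Theorem \ref{thm302}: $\rk(D)=\rk(EAD)\le\rk(E)$ for (i)$\Rightarrow$(ii), and $\rk(EAD)=\dim E(W)=r=\rk(D)$ for (ii)$\Rightarrow$(i), the injectivity of $E$ on $W$ doing the work. Your route is more economical---no inverse $C$ need ever be exhibited or verified---at the cost of leaning on the earlier characterization; the paper's route is self-contained and produces an explicit left $(D,E)$-inverse as a by-product, in the spirit of its later representation results (Theorem \ref{thm30000}). Your disposal of (ii)$\Leftrightarrow$(iii)$\Leftrightarrow$(iv) by rank--nullity is exactly what the paper dismisses as ``clear,'' and your identification of where the hypothesis $\rk(D)\le\rk(E)$ enters (the existence of $W$) matches the role played in the paper by the condition $s\le r$ when it checks $C\xn=\on$ for $\xn\in\kk(E)$.
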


\begin{proof} 
Here, It will be proved that statement (i) implies statement (ii). 
Suppose that there exists $A \in \matriz{n}{m}$ and $C\in \matriz{m}{n}$ such that
$C$ is a right $(D, E)$-inverse of $A$. In particular, $CAD=D$ and $\kk(E)\subseteq \kk(C)$.  
Thus, $\rk(D)\le\rk(C)$ and $\dim \kk(E)\le\dim \kk(C)$. As a result,
$\rk(D)\le\rk(C)\le \rk(E)$.

Suppose that statement (iv) holds. 
Let $r=\rk(E)$ and $s=\rk(D)$. Let $X = [\xn_1 \cdots \xn_n] \in \ce_n$ 
and $Y = [\yn_1 \cdots \yn_m] \in \ce_m$ be two nonsingular matrices
such that the last $n-r$ columns of $X$ span $\kk(E)$ and the first
$s$ columns of $Y$ span $\rr(D)$. Define
$$
A = X \left[ \begin{array}{cc} I_s & 0 \\ 0 & 0 \end{array} \right] Y^{-1} \in \ce_{n,m}, 
\qquad
C = Y \left[ \begin{array}{cc} I_s & 0 \\ 0 & 0 \end{array} \right] X^{-1} \in \ce_{m,n}. 
$$
If $\xn \in \kk(E)$, then $\xn = \sum_{i=r+1}^n \alpha_i \xn_i$ for some scalars $\alpha_i$.
Since $s= \rk(D) \le \rk(E)=r$, the vector $\vn = [0 \cdots 0 \ \alpha_{r+1} \cdots \alpha_n]^T
\in \ce^{n-s}$ can be defined (the superscript $T$ means the transposition). Hence
$\xn = X \left[ \begin{smallmatrix} \on \\ \vn \end{smallmatrix} \right]$, and thus, 
$C\xn = \on$.
If $\yn \in \rr(D)$, then exists $\wn \in \ce^s$ such that $\yn = 
Y \left[ \begin{smallmatrix} \wn \\ \on \end{smallmatrix} \right]$ 
(because the first $s$ columns of $Y$ span $\rr(D)$). Now, it is trivial 
to prove $CA\yn = \yn$, which implies $CAD=D$ since $\yn \in \rr(D)$ is arbitrary.
Hence (i) holds.

The remaining equivalences are clear.
\end{proof}

\begin{thm}\label{thm304} Let $A \in \matriz{n}{m}$ and $D, E \in \matriz{m}{n}$. The following statements are equivalent.\par
\begin{enumerate}[{\rm (i)}]
\item $(\matriz{n}{m})^{\parallel D, E}_{right}\ne\emptyset$. 
\item $\rk(E)\le\rk(D)$.
\item $\dim \kk(D)\le \dim \kk(E)$.
\item $n\le\dim \kk(E)+\rk(D)$.
\end{enumerate}
\end{thm}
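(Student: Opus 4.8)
The plan is to dispose of the equivalences (ii) $\Leftrightarrow$ (iii) $\Leftrightarrow$ (iv) by pure rank--nullity bookkeeping, and then to obtain (i) $\Leftrightarrow$ (ii) by dualizing Theorem \ref{thm303} through the conjugate-transpose correspondence of Remark \ref{rema1200}.

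First I would record that, since $D, E \in \matriz{m}{n}$ act on $\ce^n$, one has $\rk(D) + \dim\kk(D) = n$ and $\rk(E)+\dim\kk(E) = n$. Substituting these relations, statement (iii), namely $\dim\kk(D) \le \dim\kk(E)$, reads $n - \rk(D) \le n - \rk(E)$, while statement (iv), namely $n \le \dim\kk(E) + \rk(D)$, reads $n \le (n-\rk(E)) + \rk(D)$; both simplify to $\rk(E) \le \rk(D)$, which is exactly statement (ii). Thus (ii), (iii) and (iv) are equivalent without reference to any particular $A$.

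The heart of the argument is (i) $\Leftrightarrow$ (ii). By Remark \ref{rema1200}(ii), a matrix $A \in \matriz{n}{m}$ is right $(D,E)$-invertible if and only if $A^* \in \matriz{m}{n}$ is left $(E^*, D^*)$-invertible, where $E^*, D^* \in \matriz{n}{m}$. Since conjugate transposition is a bijection between $\matriz{n}{m}$ and $\matriz{m}{n}$, I conclude that $(\matriz{n}{m})^{\parallel D, E}_{right}$ is nonempty if and only if $(\matriz{m}{n})^{\parallel E^*, D^*}_{left}$ is nonempty. Then I would apply Theorem \ref{thm303} with its two matrices taken to be $E^*$ and $D^*$ (and its ambient dimension taken to be $m$): the set $(\matriz{m}{n})^{\parallel E^*, D^*}_{left}$ is nonempty if and only if $\rk(E^*) \le \rk(D^*)$. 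Using $\rk(E^*)=\rk(E)$ and $\rk(D^*)=\rk(D)$, this condition is precisely $\rk(E)\le\rk(D)$, i.e. statement (ii), which closes the chain of equivalences.

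The computation is short, so the only real pitfall is the bookkeeping in the dualization: when Theorem \ref{thm303} is read through Remark \ref{rema1200}(ii), the pair $(D,E)$ is replaced by $(E^*,D^*)$, so the roles of the two matrices are interchanged, and it is exactly this interchange that turns the inequality $\rk(D)\le\rk(E)$ of the left case into the reversed inequality $\rk(E)\le\rk(D)$ of the right case; one must also keep in mind that the ambient dimension switches from $n$ to $m$, although this never surfaces in (ii) because ranks are invariant under conjugate transposition. As a self-contained alternative I could instead argue (i) $\Rightarrow$ (ii) directly---if $EAB=E$ with $\rr(B)\subseteq\rr(D)$, then $\rk(E)=\rk(EAB)\le\rk(B)\le\rk(D)$---and prove (iv) $\Rightarrow$ (i) by mirroring the explicit construction in the proof of Theorem \ref{thm303}, building $A$ and $B$ from nonsingular matrices whose distinguished columns span $\rr(D)$ and $\kk(E)$. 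However, the duality route is cleaner and reuses the work already done.
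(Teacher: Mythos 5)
Your proposal is correct and follows essentially the same route as the paper: the paper also reduces statement (i) to the left case via Remark \ref{rema1200}~(ii), applies Theorem \ref{thm303} to the pair $(E^*,D^*)$, and dismisses the equivalences (ii) $\Leftrightarrow$ (iii) $\Leftrightarrow$ (iv) as clear rank--nullity consequences. Your explicit attention to the interchange of the two matrices and the invariance of rank under conjugate transposition is exactly the bookkeeping the paper leaves implicit.
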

\begin{proof} Recall that according to Remark \ref{rema1200} (ii),  $A$ is right $(D,E)$-invertible 
if and only if $A^*\in \matriz{m}{n}$ is left $(E^*, D^*)$-invertible ($E^*$, $D^*\in\matriz{n}{m}$).
Consequently, statement (i) is equivalent to $(\matriz{m}{n})^{\parallel E^*, D^*}_{left}\ne\emptyset$,
which in turn is equivalent to $\rk(E^*)\le\rk(D^*)$. However, the latter inequality coincides with statement (ii).

The remaining equivalences are clear.
\end{proof}

Next the case of the left and right inverses along a matrix will be considered.\par

\begin{cor}\label{cor305}Let $D\in \matriz{m}{n}$. Then,\par
$$
(\matriz{n}{m})^{\parallel D}_{left}=(\matriz{n}{m})^{\parallel D}_{right}\neq\emptyset.
$$
\end{cor}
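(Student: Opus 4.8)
The plan is to deduce this corollary directly from the results already established, specializing $E=D$ throughout. There are two things to prove: the set equality $(\matriz{n}{m})^{\parallel D}_{left}=(\matriz{n}{m})^{\parallel D}_{right}$ and the fact that this common set is nonempty. I would handle these two points separately.

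For the equality, I would invoke Theorem \ref{thm3039}. That theorem asserts, for a fixed $A\in\matriz{n}{m}$ and $D\in\matriz{m}{n}$, that $A$ is left invertible along $D$ if and only if $A$ is right invertible along $D$ (its statements (i) and (ii) are equivalent). Since membership in $(\matriz{n}{m})^{\parallel D}_{left}$ (respectively $(\matriz{n}{m})^{\parallel D}_{right}$) is by definition exactly the condition that $A$ be left (respectively right) invertible along $D$, this equivalence says precisely that the two sets contain the same matrices. Hence they coincide.

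For nonemptiness, I would apply Theorem \ref{thm303} in the case $E=D$. By definition $(\matriz{n}{m})^{\parallel D}_{left}=(\matriz{n}{m})^{\parallel D,D}_{left}$, and Theorem \ref{thm303} characterizes $(\matriz{n}{m})^{\parallel D,E}_{left}\ne\emptyset$ by the rank inequality $\rk(D)\le\rk(E)$. Setting $E=D$ turns this into $\rk(D)\le\rk(D)$, which holds trivially, so the left set is nonempty. Equivalently, Theorem \ref{thm304} with $E=D$ gives $\rk(D)\le\rk(D)$ for the right set; either check suffices once the equality of the two sets is known.

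There is essentially no obstacle here, as all the substantive work resides in the preceding theorems. The corollary is simply the observation that the two opposing rank conditions $\rk(D)\le\rk(E)$ and $\rk(E)\le\rk(D)$ both collapse to the trivial equality when $E=D$, combined with the left-right equivalence of Theorem \ref{thm3039}. The only point worth noting is the harmless one that specializing $E=D$ is legitimate, which is immediate from the very definitions of $(\matriz{n}{m})^{\parallel D}_{left}$ and $(\matriz{n}{m})^{\parallel D}_{right}$.
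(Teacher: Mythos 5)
Your proof is correct and follows exactly the paper's own route: Theorem \ref{thm3039} gives the equality of the two sets, and Theorem \ref{thm303} (or \ref{thm304}) specialized to $E=D$ gives nonemptiness via the trivially satisfied rank inequality. The paper's proof is just a one-line citation of these same results; your version merely spells out the details.
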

\begin{proof} Apply Theorem \ref{thm3039} and Theorem \ref{thm303} or Theorem \ref{thm304} for the case $D=E$.
\end{proof}

\indent Next  the results in Corollary \ref{cor305} will be extended to the case $\dim \kk(E)+\rk(D)=n$ ($D$, $E\in \matriz{m}{n}$).
Note that this condition is equivalent to $\rk(D)=\rk(E)$, which in turn is equivalent to $\dim \kk(D)=\dim \kk(E)$, which is also equivalent 
to $\dim \kk(D)+\rk(E)=n$.

\begin{cor}\label{cor3050} Let $D$, $E\in\matriz{m}{n}$. The following statements are equivalent.
\begin{enumerate}[{\rm (i)}]
\item $\rk(E)=\rk(D)$.
\item $(\matriz{n}{m})^{\parallel D, E}_{left}\ne\emptyset$ and $(\matriz{n}{m})^{\parallel D, E}_{right}\ne\emptyset$.
\item $(\matriz{n}{m})^{\parallel D, E}_{left}=(\matriz{n}{m})^{\parallel D, E}_{right}\neq\emptyset$.
\end{enumerate}
\end{cor}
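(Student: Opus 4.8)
The plan is to establish the two equivalences (i) $\Leftrightarrow$ (ii) and (i) $\Leftrightarrow$ (iii), the first directly from the existence theorems and the second by showing that under the rank equality the two solution sets genuinely coincide; the trivial implication (iii) $\Rightarrow$ (ii) then closes everything into a single equivalence. The backbone of the argument is that all the hard work has already been done in the rank criteria of Theorems \ref{thm301}, \ref{thm302}, \ref{thm303} and \ref{thm304}, so the proof should consist of assembling these rather than constructing anything new.

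First I would dispose of (i) $\Leftrightarrow$ (ii), which is immediate. By Theorem \ref{thm303}, the set $(\matriz{n}{m})^{\parallel D, E}_{left}$ is nonempty exactly when $\rk(D) \le \rk(E)$, and by Theorem \ref{thm304}, the set $(\matriz{n}{m})^{\parallel D, E}_{right}$ is nonempty exactly when $\rk(E) \le \rk(D)$. Statement (ii) asserts that both sets are nonempty, i.e.\ that both inequalities hold at once, which is precisely $\rk(E) = \rk(D)$, namely statement (i).

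The key step, carrying the real content, is to upgrade (i) to (iii): under $\rk(D) = \rk(E)$ the two sets do not merely coexist but are equal. Rather than argue at the level of sets, I would fix an arbitrary $A \in \matriz{n}{m}$ and show it is left $(D,E)$-invertible if and only if it is right $(D,E)$-invertible. The crucial observation is that both one-sided invertibilities are governed by the single quantity $\rk(EAD)$: by Theorem \ref{thm301}, $A$ is right $(D,E)$-invertible iff $\rk(E) = \rk(EAD)$, while by Theorem \ref{thm302}, $A$ is left $(D,E)$-invertible iff $\rk(D) = \rk(EAD)$. Since (i) forces $\rk(D) = \rk(E)$, these two scalar conditions collapse into the same one, so each one-sided invertibility entails the other. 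This gives $(\matriz{n}{m})^{\parallel D, E}_{left} = (\matriz{n}{m})^{\parallel D, E}_{right}$, and as (i) already yields (ii) the common set is nonempty; hence (iii) holds.

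Finally, (iii) $\Rightarrow$ (ii) is trivial, since equality of the two sets together with their being nonempty immediately gives that each is nonempty. The main obstacle I anticipate is not any calculation but the temptation to build explicit one-sided inverses; the clean route is to funnel everything through the invariant $\rk(EAD)$ and the already-established rank characterizations, so that the coincidence of the two sets falls out simply from the coincidence of their two defining rank conditions under hypothesis (i).
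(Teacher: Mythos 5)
Your proof is correct and follows essentially the same route as the paper: (i)$\Leftrightarrow$(ii) via Theorems \ref{thm303} and \ref{thm304}, and the set equality under (i) by observing that the left and right invertibility of a fixed $A$ are both governed by $\rk(EAD)$ through Theorems \ref{thm301} and \ref{thm302}. The only cosmetic difference is that you close the cycle via the trivial implication (iii)$\Rightarrow$(ii), whereas the paper proves (iii)$\Rightarrow$(i) directly by picking an element of the common set; both are equally valid.
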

\begin{proof}To prove the equivalence between statements (i) and (ii), apply Theorem \ref{thm303} and Theorem \ref{thm304}.\par

\indent Suppose that statment (i) holds and consider $A\in (\matriz{n}{m})^{\parallel D, E}_{right}$. According to Theorem \ref{thm301} (iii), $\rk(D)=\rk (EAD)$.
Thus, $\dim\kk(D)=\dim\kk(EAD)$. Consequently, according to Theorem \ref{thm302} (iii), $A\in (\matriz{n}{m})^{\parallel D, E}_{left}$. A similar argument,
using in particular that $\dim \kk(D)=\dim \kk(E)$, proves that $(\matriz{n}{m})^{\parallel D, E}_{left}\subseteq (\matriz{n}{m})^{\parallel D, E}_{right}$. On the other hand,
if statement (iii) holds, then consider $A\in (\matriz{n}{m})^{\parallel D, E}_{left}=(\matriz{n}{m})^{\parallel D, E}_{right}$. According to Theorem \ref{thm301}
and Theorem  \ref{thm302}, $\rk(E)=\rk(EAD)$ and $\kk(D)=\kk(EAD)$. Therefore, $\dim \kk(D)+\rk(E)=n$.
\end{proof}

Now the case $\rk(D)\neq\rk(E)$ will be presented.

\begin{cor}\label{cor3051} Let $D$, $E\in\matriz{m}{n}$ such that $\rk(D)\neq\rk(E)$. Then, the following statements hold.
\begin{enumerate}[{\rm (i)}]
\item If $\rk(D)<\rk(E)$, then $(\matriz{n}{m})^{\parallel D, E}_{left}\ne\emptyset$ and $(\matriz{n}{m})^{\parallel D, E}_{right}=\emptyset$.
\item  If $\rk (E)<\rk D)$, then $(\matriz{n}{m})^{\parallel D, E}_{right}\ne\emptyset$ and $(\matriz{n}{m})^{\parallel D, E}_{left}=\emptyset$.
\end{enumerate}
\end{cor}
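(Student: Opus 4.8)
The plan is to read both statements directly off the two non-emptiness criteria already established, namely Theorem \ref{thm303} and Theorem \ref{thm304}. These assert, respectively, that $(\matriz{n}{m})^{\parallel D, E}_{left}\ne\emptyset$ if and only if $\rk(D)\le\rk(E)$, and that $(\matriz{n}{m})^{\parallel D, E}_{right}\ne\emptyset$ if and only if $\rk(E)\le\rk(D)$. The key observation is that, under the standing hypothesis $\rk(D)\ne\rk(E)$, exactly one of the two strict inequalities holds, so that each of the conditions $\rk(D)\le\rk(E)$ and $\rk(E)\le\rk(D)$ is satisfied in precisely one of the two cases and fails in the other.

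For part (i), I would note that $\rk(D)<\rk(E)$ in particular gives $\rk(D)\le\rk(E)$, so Theorem \ref{thm303} yields $(\matriz{n}{m})^{\parallel D, E}_{left}\ne\emptyset$; at the same time $\rk(D)<\rk(E)$ rules out $\rk(E)\le\rk(D)$, whence Theorem \ref{thm304} forces $(\matriz{n}{m})^{\parallel D, E}_{right}=\emptyset$. Part (ii) then follows by the symmetric argument, interchanging the roles of $D$ and $E$: from $\rk(E)<\rk(D)$ one has $\rk(E)\le\rk(D)$, so Theorem \ref{thm304} gives $(\matriz{n}{m})^{\parallel D, E}_{right}\ne\emptyset$, while the failure of $\rk(D)\le\rk(E)$ together with Theorem \ref{thm303} gives $(\matriz{n}{m})^{\parallel D, E}_{left}=\emptyset$.

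Since both ingredients are already proved, there is no genuine obstacle in this argument; it is a pure bookkeeping deduction. The only point requiring care is to pair each inequality with the correct one-sided set, that is, the left set with $\rk(D)\le\rk(E)$ and the right set with $\rk(E)\le\rk(D)$, so that the strict inequality furnished by the hypothesis is used precisely to annihilate the inverse of the opposite side.
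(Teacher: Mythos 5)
Your proof is correct and is exactly the paper's argument: the paper's proof of this corollary consists of the single line ``Apply Theorem \ref{thm303} and Theorem \ref{thm304},'' and your write-up simply spells out the same deduction from those two non-emptiness criteria. No issues.
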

\begin{proof} Apply Theorem \ref{thm303} and Theorem \ref{thm304}. 
\end{proof}

\indent Next representations of the sets $(\matriz{n}{m})^{\parallel D, E}_{left}$ and $(\matriz{n}{m})^{\parallel D, E}_{right}$
will be given. However, in first place some notation needs to be introduced.\par

Let $D$, $E\in\matriz{m}{n}$ and consider $A\in (\matriz{n}{m})^{\parallel D, E}_{left}$. Then, \it the set of all left $(D, E)$-inverses of $A$ \rm will be denoted by $\mathcal{I}(A)_{left}^{\parallel D, E}$.
Similarly, when $A\in (\matriz{n}{m})^{\parallel D, E}_{right}$, $\mathcal{I}(A)_{right}^{\parallel D, E}$ will stand for \it the set of all
right $(D, E)$-inverses of $A$. \rm

In the following theorems representations of $(\matriz{n}{m})^{\parallel D, E}_{left}$,  $(\matriz{n}{m})^{\parallel D, E}_{right}$, 
$\mathcal{I}(A)_{left}^{\parallel D, E}$ and $\mathcal{I}(A)_{right}^{\parallel D, E}$ will be given.

\begin{thm}\label{thm30000} Let $D, E\in\matriz{m}{n}$ be such that
$(\ce_{n,m})_{left}^{\| D,E} \neq \emptyset$. Let $s=\rk(D)\le \rk(E)=r$. Then
\begin{enumerate}[{\rm (i)}]
\item $A \in (\ce_{n,m})_{left}^{\| D,E}$ if and only if there exist two nonsingular matrices
$X \in \ce_n$ and $Y \in \ce_m$ such that 
\begin{equation}\label{axy-1}
A = X \kbordermatrix{ & s & m-s \\ s & A_1 & * \\ r-s & 0 & * \\ n-r & 0 & *} Y^{-1},
\end{equation}
where the last $n-r$ columns of $X$ are a basis of $\kk(E)$, the first $s$ columns
of $Y$ are a basis of $\rr(D)$, and $A_1$ is nonsingular.
\item Under the conditions in statement {\rm (i)}, $C \in \mathcal{I}(A)^{\parallel D, E}_{left}$
if and only if 
\begin{equation}\label{cteorema}
C = Y \kbordermatrix{ & s & r-s & n-r \\ s & A_1^{-1} & * & 0 \\ m-s & 0 & * & 0 } X^{-1}.
\end{equation}
\end{enumerate}
\end{thm}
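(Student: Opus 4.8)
The plan is to push everything through a single block decomposition obtained by hiding well-chosen bases inside $X$ and $Y$, and then to read off both statements by elementary block bookkeeping combined with the characterisations of left $(D,E)$-invertibility already established in Theorem \ref{thm302} and Theorem \ref{th_3.5}. Throughout, $s=\rk(D)\le\rk(E)=r$ (the inequality is granted by the hypothesis $(\ce_{n,m})^{\|D,E}_{left}\neq\emptyset$ and Theorem \ref{thm303}). In every case I would fix a nonsingular $Y$ whose first $s$ columns are a basis of $\rr(D)$, obtained by extending a basis of $\rr(D)$ to one of $\ce^m$.

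For the forward implication of (i) the essential point is the choice of $X$. Since $A$ is left $(D,E)$-invertible, Theorem \ref{thm302}(vi) gives $\rr(AD)\cap\kk(E)=\on$ and $\rk(AD)=\rk(D)=s$; consequently $A$ restricted to $\rr(D)$ is injective with $s$-dimensional image $\rr(AD)$. I would therefore take the first $s$ columns of $X$ to be a basis of $\rr(AD)$ and the last $n-r$ columns to be a basis of $\kk(E)$. These $s+(n-r)\le n$ vectors are linearly independent precisely because $\rr(AD)\cap\kk(E)=\on$, so they can be completed by $r-s$ further columns to a basis of $\ce^n$, yielding a nonsingular $X$ with the prescribed last block. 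The main obstacle is exactly verifying this compatibility of the two prescribed blocks; once it is settled the rest is automatic. Indeed, for $j\le s$ the column $\yn_j$ lies in $\rr(D)$, so $A\yn_j\in\rr(AD)=\operatorname{span}(\xn_1,\dots,\xn_s)$, which forces the lower two blocks of the first block-column of $X^{-1}AY$ to vanish; and the top-left $s\times s$ block $A_1$ is the matrix of the isomorphism $A|_{\rr(D)}$ in the chosen bases, hence nonsingular. This is precisely (\ref{axy-1}).

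For the converse implication of (i) I would assume $A$ has the form (\ref{axy-1}) with $A_1$ nonsingular and apply Theorem \ref{th_3.5}(i) with $\X=\operatorname{span}(\xn_1,\dots,\xn_r)$, a complement of $\kk(E)=\operatorname{span}(\xn_{r+1},\dots,\xn_n)$. Any $\xn\in\rr(D)$ can be written $\xn=Y[\vn^{T}\,\on^{T}]^{T}$ with $\vn\in\ce^s$, and the block shape gives $X^{-1}A\xn=[(A_1\vn)^{T}\,\on^{T}\,\on^{T}]^{T}$, whose last $n-r$ coordinates already vanish. Hence $\phi(\xn)=A\xn$, and $\phi(\xn)=\on$ forces $A_1\vn=\on$, i.e. $\vn=\on$ by invertibility of $A_1$; thus $\phi$ is injective and $A$ is left $(D,E)$-invertible.

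Finally, for (ii) I would set $N=Y^{-1}CX$ and decompose it with row blocks $s,m-s$ and column blocks $s,r-s,n-r$. The condition $\kk(E)\subseteq\kk(C)$ says $C\xn_j=\on$ for $j>r$, i.e. the two blocks in the last block-column of $N$ are zero. The condition $CAD=D$ is equivalent to $CA\yn=\yn$ for every $\yn\in\rr(D)$; substituting $\yn=Y[\vn^{T}\,\on^{T}]^{T}$ together with the block form of $A$ yields $N_{11}A_1\vn=\vn$ and $N_{21}A_1\vn=\on$ for all $\vn\in\ce^s$, whence $N_{11}=A_1^{-1}$ and $N_{21}=\on$ since $A_1$ is invertible. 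The two remaining blocks $N_{12},N_{22}$ are left unconstrained, which is exactly (\ref{cteorema}); the converse follows by reversing these computations. I would close by noting that the free blocks parametrise all left inverses and disappear when $r=s$, recovering the uniqueness in Proposition \ref{pro3034}.
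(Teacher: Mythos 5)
Your proof is correct and follows essentially the same route as the paper's: bases of $\rr(D)$ and $\kk(E)$ hidden in $Y$ and $X$, the block form of \eqref{axy-1} read off from $\rr(AD)\cap\kk(E)=0$ and $\rk(AD)=\rk(D)$ (Theorem \ref{thm302}), the converse via injectivity of the map $\phi$ of Theorem \ref{th_3.5}, and block bookkeeping on $Y^{-1}CX$ for part (ii). The only (harmless) divergence is that you take the first $s$ columns of $X$ to be a basis of $\rr(AD)$ directly, whereas the paper starts from an arbitrary complement of $\kk(E)$ and then rearranges columns so that the first $s$ span $\phi(\rr(D))$; your choice is slightly cleaner since it makes the vanishing of the lower blocks immediate.
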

\begin{proof}
Assume that $A \in (\ce_{n,m})_{left}^{\| D,E}$.
Let $X \in \ce_n$ be any nonsingular matrix such that the last $n-r$ columns span 
$\kk(E)$. Let $Y \in \ce_m$ be any nonsingular matrix such that the first $s$ columns 
span $\rr(D)$. Let us decompose matrix $A$ as follows:
\begin{equation}\label{x-1ay}
A = X \kbordermatrix{ & s & m-s \\ r & B_1 & * \\ n-r & B_2 & *} Y^{-1}.
\end{equation}
Observe that if $\yn_i$ is the $i$-th column of $Y$, then $A \yn_i \in \rr(AD)$
for $i=1, \ldots, s$, because the first $s$ columns of $Y$ span $\rr(D)$.
From \eqref{x-1ay}, it is obtained that $B_2=0$, because $\rr(AD)\cap\kk(E)=0$
(Theorem~\ref{thm302} (vi)).

Let $\mathcal{X}$ be the subspace spanned by the first $r$ columns of $X$ 
(this subspace satisfies $\mathcal{X} \oplus \kk(E)=\ce^n$). It is
evident that the matrix of the linear mapping $\phi: \rr(D) \to \mathcal{X}$
defined in Theorem~\ref{th_3.5} respect to the considered basis of $\rr(D)$ and
$\mathcal{X}$ is $B_1$. By Theorem~\ref{th_3.5} (i), $\phi$ is injective, 
thus, $\mathcal{X}$ can be decomposed as $\mathcal{X} = \phi(\rr(D)) \oplus \mathcal{X}_1$, 
and without loss of generality, the first $n-r$ columns of $X$ can be rearranged
so that the first $s$ are a basis of $\phi(\rr(D))$. In this way, the decomposition
written in the statement (i) is obtained. 

Assume that $A \in \ce_{n,m}$ is decomposed as in \eqref{axy-1}. As in the previous paragraph,
let $\mathcal{X}$ be the subspace spanned by the first $r$ columns of $X$ and 
consider the mapping $\phi:\rr(D) \to \mathcal{X}$ defined in Theorem~\ref{th_3.5}. The first
$s$ columns of $Y$ is a basis of $\rr(D)$ and the first $r$ columns of $X$ is a basis
of $\mathcal{X}$. The matrix of $\phi$ respect to the aforementioned basis
is $\left[ \begin{smallmatrix} A_1 \\ 0 \end{smallmatrix} \right]$. Since $A_1$
is nonsingular, the mapping $\phi$ is injective, and according to Theorem~\ref{th_3.5} (i),
$A$ is left $(D,E)$-invertible.

Now, it will be proved statement (ii). Let $\yn_j$ be the $j$-th column of $Y$
and $\xn_i$ be the $i$-th column of $X$.

Assume that $C \in \ce_{m,n}$ is a left $(D,E)$-inverse of $A$. Let us decompose
$C$ as follows:
\begin{equation}\label{cyx-1}
C = Y \kbordermatrix{& s & r-s & n - r \\ s & C_1 & C_2 & C_3 \\ m - s & C_4 & C_5 & C_6} X^{-1}.
\end{equation}
Since $\yn_j = Y \left[ \begin{smallmatrix} \en_j \\ \on \end{smallmatrix} \right]$, 
where $\{ \en_1, \ldots, \en_s \}$ is the standard basis of  $\ce^n$, from the above decomposition
and \eqref{axy-1}, it is obtained that $CA \yn_j = 
\left[ \begin{smallmatrix} C_1 A_1 \en_j \\ C_4 A_1 \en_j \end{smallmatrix} \right]$, for any
$j=1, \ldots, s$. From $CAD=D$ (this is true because $C$ is a left $(D,E)$-inverse of $A$), it 
follows that $C_1A_1 \en_j=\en_j$ and $C_4 A_1 \en_j = \on$, for any $j=1, \ldots, s$.
Therefore $C_1=A_1^{-1}$ and $C_4=0$. Having in mind that the last $n-r$ columns of $X$
is a basis of $\kk(E)$ and $\kk(E) \subseteq \kk(C)$, the decomposition \eqref{cyx-1}
yields that $C_3$ and $C_6$ are zero matrices.

Assume that $C \in \ce_{m,n}$ is written as in \eqref{cteorema}.
The following two relations will be proved: $CAD=D$ and $\kk(E) \subseteq \kk(C)$. To prove that
$CAD=D$, it is enough to check that $CA \yn_j = \yn_j$, for $j=1, \ldots, s$ (because
$\rr(D)$ is spanned by $\yn_1, \ldots, \yn_s$),	and this trivially follows from
\eqref{axy-1}, \eqref{cteorema}, and 
$\yn_j = Y \left[ \begin{smallmatrix} \en_j \\ \on \end{smallmatrix} \right]$, 
where $\{ \en_j \}_{j=1}^s$ is the standard basis of $\ce^s$. To prove that $\kk(E) \subseteq \kk(C)$,
it is enough to prove that $C \xn_i=\on$, for $i= r+1, \ldots, n$ (because $\kk(E)$ is spanned
by the last $n-r$ columns of $X$). This is trivial in view of \eqref{cteorema}.
\end{proof}

\begin{thm}\label{thm30001} 
Let $D$, $E\in\matriz{m}{n}$ be such that $(\ce_{n,m})_{right}^{\| D,E } \neq \emptyset$. Let $r=\rk(E)\le\rk(D)=s$. Then
\begin{enumerate}[{\rm (i)}]
\item $A \in (\ce_{n,m})_{right}^{\| D,E }$ if and only if there exist two nonsingular
matrices $X \in \ce_n$ and $Y \in \ce_m$ such that
\begin{equation} \label{teorema2a}
A = X \kbordermatrix{ & s-r & r & m-s \\ r & 0 & A_2 & * \\ n-r & * & * & *}Y^{-1},
\end{equation}
where  the last $n-r$ columns of $X$ are a basis of $\kk(E)$, 
the first $s$ columns of $Y$ are a basis of $\rr(D)$, and $A_2$ is nonsingular.
\item Under the conditions in statement {\rm (i)}, $B \in 
\mathcal{I}(A)^{\parallel D, E}_{right}$ if and only if
\begin{equation} \label{teorema2b}
B = Y \kbordermatrix{& r & n-r \\ s-r & * & * \\ r & A_2^{-1} & 0 \\ m-s & 0 & 0} X^{-1}.
\end{equation}
\end{enumerate}
\end{thm}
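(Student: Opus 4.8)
The plan is to prove the statement directly, paralleling the proof of Theorem~\ref{thm30000} but using the \emph{surjectivity} of the auxiliary map $\phi$ of Theorem~\ref{th_3.5} in place of its injectivity. A duality argument is also available: by Remark~\ref{rema1200}(ii), $A$ is right $(D,E)$-invertible with right inverse $B$ exactly when $A^*$ is left $(E^*,D^*)$-invertible with left inverse $B^*$, so one could apply Theorem~\ref{thm30000} to $A^*$ and the pair $(E^*,D^*)$ and transpose back. I avoid this route because matching the precise block shape of \eqref{teorema2a}--\eqref{teorema2b} afterwards requires permuting basis blocks, which is more cumbersome than arguing directly.

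For statement (i), assume first that $A\in(\ce_{n,m})^{\|D,E}_{right}$ and fix a complement $\X$ with $\ce^n=\kk(E)\oplus\X$. By Theorem~\ref{th_3.5}(ii) the map $\phi:\rr(D)\to\X$ is surjective, so $\dim\X=r$, and $\ker\phi=\rr(D)\cap\kk(EA)$ has dimension $s-r$. I would pick $Y$ whose first $s-r$ columns span $\ker\phi$, whose next $r$ columns complete a basis of $\rr(D)$, and whose last $m-s$ columns complete a basis of $\ce^m$, and pick $X$ whose first $r$ columns span $\X$ and whose last $n-r$ columns span $\kk(E)$. Setting $M=X^{-1}AY$ and examining its columns gives \eqref{teorema2a}: for $j\le s-r$ one has $A\yn_j\in\kk(E)$, so the top $r$ entries of the $j$-th column of $M$ vanish, producing the top-left zero block; for $s-r<j\le s$ the images $\phi(\yn_j)$ form a basis of $\X$, so the top $r\times r$ block $A_2$ is their coordinate matrix and is nonsingular; the last $m-s$ columns are unconstrained. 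Conversely, if $A$ has the form \eqref{teorema2a}, then reading off the matrix of $\phi$ in the same bases yields $[\,0\mid A_2\,]$, an $r\times s$ matrix of rank $r$; hence $\phi$ is surjective and Theorem~\ref{th_3.5}(ii) shows that $A$ is right $(D,E)$-invertible.

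For statement (ii) I would write $B=YN_BX^{-1}$ and partition $N_B$ with row blocks of sizes $s-r,r,m-s$ (indexing the columns of $Y$) and column blocks of sizes $r,n-r$ (indexing the columns of $X$). Since the first $s$ columns of $Y$ span $\rr(D)$, the requirement $\rr(B)\subseteq\rr(D)$ is equivalent to the vanishing of the bottom row strip of $N_B$ (of size $m-s$). The main obstacle is translating $EAB=E$ into block equations, and the key device is the identity $EX=[\,E_1\mid 0\,]$, whose last $n-r$ columns vanish because they lie in $\kk(E)$ and where $E_1\in\ce_{m,r}$ has full column rank $r$, so $E_1v=\on$ forces $v=\on$. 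Evaluating $EAB=E$ on the columns $\xn_i$ of $X$ and using $A=XMY^{-1}$, I find $EAB\xn_i=E_1A_2B_3\en_i$ for $i\le r$ and $EAB\xn_i=E_1A_2B_4\en_{i-r}$ for $i>r$, where $B_3$ and $B_4$ are the two blocks filling the middle row strip of $N_B$. Comparing with $E\xn_i$ (which equals $E_1\en_i$ for $i\le r$ and $\on$ for $i>r$) and cancelling $E_1$ forces $A_2B_3=I_r$ and $A_2B_4=0$, that is $B_3=A_2^{-1}$ and $B_4=0$, while the top strip of $N_B$ stays free. This is precisely \eqref{teorema2b}; running the computation backwards confirms that any $B$ of this shape satisfies both defining conditions, completing the proof.
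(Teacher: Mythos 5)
Your proposal is correct and follows essentially the same route as the paper: statement (i) is obtained by choosing the first $s-r$ columns of $Y$ to span $\kk(\phi)=\rr(D)\cap\kk(EA)$ and invoking the surjectivity criterion of Theorem~\ref{th_3.5}(ii), and statement (ii) by block-decomposing $B$ and extracting $A_2B_3=I_r$, $B_4=0$ from $EAB=E$. The only cosmetic difference is that you cancel the full-column-rank factor $E_1$ in $EX=[\,E_1\mid 0\,]$ where the paper instead rewrites $EAB=E$ as $\rr(AB-I_n)\subseteq\kk(E)$; these are the same computation.
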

\begin{proof} 
Assume that $A \in (\ce_{n,m})_{right}^{\| D,E }$.
Let $X \in \ce_n$ be any nonsingular matrix such that the last $n-r$ columns span $\kk(E)$
and let $\mathcal{X}$ be the subspace spanned by the first $r$ columns of $X$. 
The mapping $\phi\colon\rr(D)\to\mathcal{X}$ defined in Theorem \ref{th_3.5} is surjective,
and therefore, $s=\dim \rr(D) = \dim \kk(\phi) + \dim \mathcal{X} = \dim \kk(\phi) + r$. 
In Theorem \ref{th_3.5}, it was proved that $\kk(\phi)=\rr(D)\cap\kk(EA)$.
Let $Y \in \ce_m$ be any nonsigular matrix such that the first $s-r$ columns of $Y$
are a basis of $\rr(D) \cap \kk(EA)$
and the first $s$ columns of $Y$ are a basis of $\rr(D)$.  Decompose
$$
A = X \kbordermatrix{ & s-r & r & m-s \\ r & A_1 & A_2 & A_3 \\ n-r & A_4 & A_5 & A_6}Y^{-1}.
$$
Let $\yn_j$ be the $j$-th column of $Y$. For $j=1, \ldots, s-r$, it is obtained that $EA \yn_j=\on$,
because the first $s-r$ columns of $Y$ belong to $\kk(EA)$, in other words, 
$A \yn_j \in \kk(E)$, and thus, $A_1$ is a zero matrix.
The matrix of $\phi$ respect the considered basis of $\rr(D)$ and $\mathcal{X}$ is 
$[A_1 \ A_2] = [0 \ A_2]$. Since $\phi:\rr(D) \to \mathcal{X}$ is surjective, 
$r = \dim \mathcal{X} = \rk [0 \ A_2] = \rk(A_2)$. By recalling that $A_2$ is an $r \times r$
matrix, $A_2$ is nonsingular.

Assume that $A \in \ce_{n,m}$ is decomposed as in \eqref{teorema2a}. As in the previous
paragraph, let $\mathcal{X}$ be the subspace spanned by the first $r$ columns of $X$.
The matrix of the mapping $\phi: \rr(D) \to \mathcal{X}$ respect the considered basis is
$$
\kbordermatrix{ & s-r & r \\ r & 0 & A_2}.
$$
Since the rank of this latter matrix is $r$ (because $A_2 \in \ce_r$ is nonsingular)
and $\dim \mathcal{X}=r$, the mapping $\phi$ is surjective. According to Theorem \ref{th_3.5} (ii), $A\in (\ce_{n,m})_{right}^{\| D,E }$.

Assume that $B \in \ce_{m,n}$ is a right $(D,E)$-inverse of $A$, i.e., 
$EAB=B$ and $\rr(B) \subseteq \rr(D)$. Decompose 
$$
B= Y \kbordermatrix{ & r & n-r \\ s-r & B_1 & B_2 \\ r & B_3 & B_4 \\ m-s & B_5 & B_6} X^{-1}.
$$
By the condition $\rr(B) \subseteq \rr(D)$, it is obtained that $B \xn \in \rr(D)$ for 
any $\xn \in \ce^n$. Therefore, 
$$
Y \left[ \begin{array}{cc} B_1 & B_2 \\ B_3 & B_4 \\ B_5 & B_6 \end{array} \right]
\left[ \begin{array}{c} \xn_1 \\ \xn_2 \end{array} \right] \in \rr(D), \quad \forall \ 
(\xn_1, \xn_2 ) \in \ce^r \times \ce^{n-r}.
$$
Recall that the first $s$ columns of $Y$ span $\rr(D)$, and thus, 
$B_5 \xn_1 + B_6 \xn_2 = \on$, for all $(\xn_1, \xn_2 ) \in \ce^r \times \ce^{n-r}$, 
which implies that $B_5$ and $B_6$ are zero matrices. The equality
$EAB=E$ is equivalent to $\rr(AB-I_n) \subseteq \kk(E)$. But
$$
AB-I_n = 
X \left[ \begin{array}{ccc} 0 & A_2 & * \\ * & * & * \end{array} \right]
\left[ \begin{array}{cc} B_1 & B_2 \\ B_3 & B_4 \\ 0 & 0 \end{array} \right]X^{-1} - XX^{-1}
= X \left[ \begin{array}{cc} A_2 B_3 - I_r & A_2 B_4 \\ * & * \end{array} \right] X^{-1}.
$$
Therefore, $X \left[ \begin{smallmatrix} A_2 B_3-I_r & A_2 B_4 \\ * & * \end{smallmatrix} \right]
\left[ \begin{smallmatrix} \xn_1 \\ \xn_2 \end{smallmatrix} \right] 
\in \kk(E)$, for all $(\xn_1, \xn_2) \in \ce^r \times \ce^{n-r}$. Recall that 
the last $n-r$ columns of $X$ span $\kk(E)$, which implies that $A_2B_3=I_r$ and 
$B_4=0$.

Assume that $B \in \ce_{m,n}$ is written as in \eqref{teorema2b}. It will be 
proved now that $\rr(B) \subseteq \rr(D)$. If $\xn \in \ce^n$, then
$$
B \xn = Y \left[ \begin{array}{cc} * & * \\ A_1^{-1} & 0 \\ 0 & 0 \end{array} \right]X^{-1} \xn
\in \rr(D),
$$
because the first $s$ columns of $Y$ belong to $\rr(D)$.
Now it will be proved that $EAB=B$. Since
$$
AB-I_n = X \left[ \begin{array}{ccc} 0 & A_2 & * \\ * & * & * \end{array} \right]
\left[ \begin{array}{cc} * & * \\ A_2^{-1} & 0 \\ 0 & 0 \end{array} \right]X^{-1}-XX^{-1}
= X \left[ \begin{array}{cc} 0 & 0 \\ * & * \end{array} \right]X^{-1},
$$
it is obtained that $(AB-I_n) \xn \in \kk(E)$, for any $\xn \in \ce^n$, because the last $n-r$ columns
of $X$ belong to $\kk(E)$. Thus, $\rr(AB-I_n) \subseteq \kk(E)$, which is equivalent to
$EAB=E$.
\end{proof}

Next the case $\rk(E)=\rk(D)$ will be studied, i.e., when $(\ce_{n,m})_{left}^{\| D,E} = (\ce_{n,m})_{right}^{\| D,E}\neq\emptyset$  ($D, E \in \matriz{m}{n}$). 
Compare with  Proposition~\ref{pro3034}.

\begin{cor}\label{cor30005} 
Let $D, E \in \matriz{m}{n}$ be such that $\rk(E)=\rk(D)=r$.
Let $X \in \ce_n$ be any nonsingular matrix such that its last $n-r$ columns span $\kk(E)$ and 
$Y \in \ce_m$ be any nonsingular matrix such that its first $r$ columns span $\rr(D)$. If
$A \in \ce_{n,m}$ is written as 
$$
A = X \kbordermatrix{ & r & n-r \\ r & A_1 & * \\ m-r & * & * } Y^{-1},
$$
then $A \in (\ce_{n,m})_{left}^{\| D,E} = (\ce_{n,m})_{right}^{\| D,E}$ if and only if 
$A_1$ is nonsingular. Furthermore, 
under the above equivalence, the unique left and the unique right $(D, E)$-inverse of $A$ is 
$$
Y  \left[ \begin{array}{cc} A_1^{-1} & 0 \\ 0 & 0 \end{array} \right]X^{-1}.
$$
\end{cor}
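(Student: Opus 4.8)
The plan is to reduce everything to the single linear map $\phi$ of Theorem \ref{th_3.5} together with the uniqueness statement of Proposition \ref{pro3034}. Since $\rk(E)=\rk(D)=r$, Corollary \ref{cor3050} already guarantees that the two sets of one-sided invertible matrices coincide, so it suffices to decide, for $A$ written in the given block form, when $A$ is (say) left $(D,E)$-invertible, and then to identify the inverse. I would take $\mathcal{X}$ to be the subspace spanned by the first $r$ columns of $X$; since the last $n-r$ columns of $X$ span $\kk(E)$ and $X$ is nonsingular, $\mathcal{X}$ is a complement of $\kk(E)$ in $\ce^n$, so the map $\phi:\rr(D)\to\mathcal{X}$ of Theorem \ref{th_3.5} is well defined with $\ce^n=\kk(E)\oplus\mathcal{X}$.

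The main step is to compute the matrix of $\phi$. Writing $A=X\left[\begin{smallmatrix} A_1 & * \\ * & * \end{smallmatrix}\right]Y^{-1}$ with $A_1\in\ce_r$, and letting $\yn_j$ denote the $j$-th column of $Y$, for $1\le j\le r$ one has $\yn_j\in\rr(D)$ and $X^{-1}A\yn_j$ equals the $j$-th column of the block matrix, namely $\left[\begin{smallmatrix}(A_1)_{\cdot j}\\ *\end{smallmatrix}\right]$. Splitting this column according to the direct sum $\mathcal{X}\oplus\kk(E)$ (the first $r$ coordinates give the $\mathcal{X}$-component, the last $n-r$ the $\kk(E)$-component) and applying $P_{\mathcal{X},\kk(E)}$, the $\kk(E)$-part is annihilated, so $\phi(\yn_j)$ has $\mathcal{X}$-coordinates equal to the $j$-th column of $A_1$. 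Hence the matrix of $\phi$ with respect to the basis $\{\yn_1,\dots,\yn_r\}$ of $\rr(D)$ and the basis of $\mathcal{X}$ given by the first $r$ columns of $X$ is exactly $A_1$. Consequently $\phi$ is bijective precisely when $A_1$ is nonsingular, and by the equivalence of (i) and (v) in Theorem \ref{thm3033} this is exactly the condition for $A$ to be both left and right $(D,E)$-invertible. I expect this identification of the matrix of $\phi$ with $A_1$ — keeping track of the oblique projection $P_{\mathcal{X},\kk(E)}$ in the chosen coordinates — to be the only nonroutine point.

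For the inverse, assuming $A_1$ nonsingular, I would set $R=Y\left[\begin{smallmatrix} A_1^{-1} & 0 \\ 0 & 0 \end{smallmatrix}\right]X^{-1}$ and verify directly the two defining conditions of a left $(D,E)$-inverse. Writing $X^{-1}\xn=\left[\begin{smallmatrix}\un\\\vn\end{smallmatrix}\right]$ with $\un\in\ce^r$, $\vn\in\ce^{n-r}$, one gets $R\xn=\on$ iff $A_1^{-1}\un=\on$ iff $\un=\on$, that is, iff $\xn$ lies in the span of the last $n-r$ columns of $X$; thus $\kk(R)=\kk(E)$. For $1\le j\le r$, using $X^{-1}A\yn_j=\left[\begin{smallmatrix}(A_1)_{\cdot j}\\ *\end{smallmatrix}\right]$ one computes $RA\yn_j=Y\left[\begin{smallmatrix}A_1^{-1}(A_1)_{\cdot j}\\ \on\end{smallmatrix}\right]=Y\left[\begin{smallmatrix}\en_j\\ \on\end{smallmatrix}\right]=\yn_j$, so $RA$ is the identity on $\rr(D)$ and therefore $RAD=D$. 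Hence $R$ is a left $(D,E)$-inverse of $A$, and since $A$ is simultaneously left and right $(D,E)$-invertible, Proposition \ref{pro3034} shows that $R$ is the unique left and the unique right $(D,E)$-inverse, completing the proof. Equivalently, the whole statement is the specialization $s=r$ of Theorems \ref{thm30000} and \ref{thm30001}, in which the middle $r-s$ block collapses and no rearrangement of the columns of $X$ is needed.
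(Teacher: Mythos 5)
Your proposal is correct and follows essentially the same route as the paper: identify the matrix of the map $\phi$ of Theorem \ref{th_3.5} with $A_1$ and invoke Theorem \ref{thm3033}, then obtain the formula for the inverse (the paper cites the proof of Theorem \ref{thm30000} (ii) at this point, whereas you inline the same direct verification and appeal to Proposition \ref{pro3034} for uniqueness). The extra details you supply — the explicit computation of $\phi(\yn_j)$ through the oblique projection and the check that $\kk(R)=\kk(E)$ and $RAD=D$ — are exactly what the paper leaves implicit, so there is no substantive difference.
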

\begin{proof} 
Let $\mathcal{X}$ be the subspace spanned by the first $r$ columns of $X$. 
The matrix of the mapping $\phi: \rr(D) \to \mathcal{X}$ defined in Theorem~\ref{th_3.5}
respect the considered basis is $A_1$. Therefore, according to Theorem~\ref{thm3033}, 
the matrix $A_1$ is nonsingular if and only if $A$ is left and right $(D,E)$-invertible. 
The proof of Theorem~\ref{thm30000} (ii) shows that the unique left and right $(D,E)$-inverse 
of $A$ as the form of the statement of this corollary.
\end{proof}

\begin{rema}\label{nota1}
\rm 
Let $D, E \in \matriz{m}{n}$ be such that $\rk(E)=\rk(D)$ and $A \in (\ce_{n,m})_{left}^{\| D,E}
= (\ce_{n,m})_{right}^{\| D,E}$.
Let $\ce^n = \kk(E) \oplus \mathcal{X}$ be any decomposition. From Corollary \ref{cor30005}, 
the unique left and right $(D,E)$-inverse of $A$, say $R$, satisfies
$$
\kk(R) = \kk(E), \qquad R \yn = \phi^{-1}(\yn), \ \ \forall \yn \in \mathcal{X}. 
$$
\end{rema}

Next the case of left and right inverses along a fixed matrix will be considered.

\begin{cor}\label{cor30200} 
Let $D\in\matriz{m}{n}$ and $r =  \rk(D)$.
Let $X \in \ce_n$ be any nonsingular matrix such that its last columns span $\kk(D)$ and 
$Y \in \ce_m$ be any nonsingular matrix such that its first columns span $\rr(D)$. If
$A \in \ce_{n,m}$ is written as 
$$
A = X \left[ \begin{array}{cc} A_1 & * \\ * & * \end{array} \right] Y^{-1},
\quad A_1 \in \ce_r, 
$$
then $A \in (\ce_{n,m})_{left}^{\| D} = (\ce_{n,m})_{right}^{\| D}$ if and only if 
$A_1$ is nonsingular. Furthermore, 
under the above equivalence, the unique left and the unique right inverse of $A$ along $D$ is 
$$
Y  \left[ \begin{array}{cc} A_1^{-1} & 0 \\ 0 & 0 \end{array} \right]X^{-1}.
$$
\end{cor}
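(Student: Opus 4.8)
The plan is to derive this corollary as the special case $E = D$ of Corollary \ref{cor30005}, so that essentially no new computation is required. First I would record the notational reductions that make the specialization legitimate. By the conventions introduced just before Theorem \ref{thm303}, one has $(\ce_{n,m})_{left}^{\| D} = (\ce_{n,m})_{left}^{\| D,D}$ and $(\ce_{n,m})_{right}^{\| D} = (\ce_{n,m})_{right}^{\| D,D}$, and comparing Definition \ref{def_2.7} with Definition \ref{def_2.700} shows that a left (respectively right) $(D,D)$-inverse of $A$ is exactly a left (respectively right) inverse of $A$ along $D$, since the condition $\kk(E) \subseteq \kk(C)$ becomes $\kk(D) \subseteq \kk(C)$ while $\rr(B) \subseteq \rr(D)$ is unchanged. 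Thus every statement about left/right invertibility along $D$ translates verbatim into a statement about left/right $(D,D)$-invertibility.

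Next I would check that the hypotheses of Corollary \ref{cor30005} are met when $E = D$. The rank condition $\rk(E) = \rk(D) = r$ is automatic, and the requirements there — that the last $n-r$ columns of $X$ span $\kk(E)$ and the first $r$ columns of $Y$ span $\rr(D)$ — become precisely the requirements in the present statement once $\kk(E)$ is replaced by $\kk(D)$. Hence Corollary \ref{cor30005} applies directly and yields both conclusions at once: $A \in (\ce_{n,m})_{left}^{\| D,D} = (\ce_{n,m})_{right}^{\| D,D}$ if and only if $A_1$ is nonsingular, and in that case the unique left and right $(D,D)$-inverse of $A$ equals $Y \left[ \begin{smallmatrix} A_1^{-1} & 0 \\ 0 & 0 \end{smallmatrix} \right] X^{-1}$.

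Finally, to phrase the conclusion in the language of inverses along $D$, I would invoke Theorem \ref{thm3039} (equivalently Corollary \ref{cor305}), which guarantees $(\ce_{n,m})_{left}^{\| D} = (\ce_{n,m})_{right}^{\| D}$, together with the uniqueness already established in Corollary \ref{cor31000} (or Proposition \ref{pro3034}). Combined with the previous paragraph, this identifies the displayed matrix as the common value of the unique left inverse and the unique right inverse of $A$ along $D$. The only point that needs care — and it is the closest thing to an obstacle — is verifying that the two parallel families of definitions genuinely coincide under $E = D$, so that the cited corollary may be applied without modification; once that identification is made, the result is immediate.
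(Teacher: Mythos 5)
Your proposal is correct and follows exactly the paper's own route: the paper proves this corollary by citing Corollary \ref{cor305} and Corollary \ref{cor30005}, which is precisely the specialization $E=D$ you carry out (your extra care in checking that left/right $(D,D)$-inverses coincide by definition with left/right inverses along $D$ is a sound, if implicit, part of that citation).
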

\begin{proof} Apply Corollary \ref{cor305} and Corollary \ref{cor30005}.
\end{proof}
\indent  In the following theorem the sets 
 $\mathcal{I}(A)_{left}^{\parallel D, E}$ and $\mathcal{I}(A)_{right}^{\parallel D, E}$ will be represented 
using the Moore-Penrose inverse.  

\begin{thm}\label{5L1} Let $D, E \in \matriz{m}{n}$. The following statements hold.
\begin{enumerate}[{\rm (i)}]
\item If $A\in (\matriz{n}{m})^{\parallel D, E}_{right}$, then 
	$$
	\mathcal{I}(A)_{right}^{\parallel D, E}=\left\{ D \left[ (EAD)^\dag E + 
	\left(I_n -(EAD)^\dag EAD \right)Z\right] : Z \in \ce_n\right\}.
	$$
\item If $A\in (\matriz{n}{m})^{\parallel D, E}_{left}$, then 
$$
\mathcal{I}(A)_{left}^{\parallel D, E}=\left\{ \left[ D(EAD)^\dag + Z(I_m - EAD(EAD)^\dag) \right]E : Z \in \ce_m\right\}.
$$
\end{enumerate}
\end{thm}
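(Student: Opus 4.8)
The plan is to characterize the set of right $(D,E)$-inverses directly from the defining equations $EAB=E$ and $\rr(B)\subseteq\rr(D)$, and then translate these into a Moore-Penrose formula, with statement (ii) following by the duality already established in Remark~\ref{rema1200}. I would begin with statement (i). Since $A$ is right $(D,E)$-invertible, Theorem~\ref{thm301} guarantees $\rr(E)=\rr(EAD)$, equivalently $\rk(E)=\rk(EAD)$; this consistency condition is exactly what makes the matrix equation $(EAD)W=E$ solvable in $W\in\ce_n$. The first key step is to show that the condition $\rr(B)\subseteq\rr(D)$ forces $B=DW$ for some $W\in\ce_n$ (this is the standard fact recalled before Definition~\ref{def_2.7}), so that $EAB=E$ becomes $(EAD)W=E$. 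Thus the right $(D,E)$-inverses are precisely the matrices $DW$ where $W$ ranges over all solutions of $(EAD)W=E$.

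The second step is to parametrize the solution set of $(EAD)W=E$ using the Moore-Penrose inverse. By the standard theory of consistent linear matrix equations, $(EAD)W=E$ is solvable if and only if $(EAD)(EAD)^\dag E = E$, which holds here because $\rr(E)\subseteq\rr(EAD)$ and $(EAD)(EAD)^\dag$ is the orthogonal projector onto $\rr(EAD)$. The general solution is then
$$
W = (EAD)^\dag E + \left(I_n - (EAD)^\dag EAD\right)Z, \qquad Z\in\ce_n,
$$
since $I_n - (EAD)^\dag EAD$ is the orthogonal projector onto $\kk(EAD)$ and parametrizes the homogeneous solutions. Substituting $B=DW$ yields exactly the expression claimed in statement (i). I would include a brief verification that every such $B$ genuinely satisfies both defining conditions (range inclusion is immediate from the leading factor $D$, and $EAB=E$ follows from the projector identity), so that the displayed set is neither too large nor too small.

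For statement (ii), **the cleanest route** is to invoke Remark~\ref{rema1200}(i), which says $A$ is left $(D,E)$-invertible with left inverse $C$ precisely when $A^*$ is right $(E^*,D^*)$-invertible with right inverse $C^*$. Applying statement (i) to the triple $(A^*,E^*,D^*)$ describes $\mathcal{I}(A^*)^{\parallel E^*,D^*}_{right}$ in terms of $(E^*A^*D^*)^\dag = (DAE)^\dag$ — wait, one must track the order carefully: the relevant product for $A^*$ is $D^*A^*E^* = (EAD)^*$, and $((EAD)^*)^\dag = ((EAD)^\dag)^*$. Taking conjugate transposes of the resulting parametrization, using $(P^\dag)^* = (P^*)^\dag$ and the Hermitian symmetry of the projectors $PP^\dag$ and $P^\dag P$, converts the right-inverse formula for $A^*$ into the left-inverse formula for $A$ stated in (ii).

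**The main obstacle** I anticipate is purely bookkeeping: correctly matching the blocks $D$, $E$, and $EAD$ to their conjugate-transpose counterparts when dualizing, and confirming that the Moore-Penrose inverse commutes with the involution in the needed way so that the two projectors $(EAD)^\dag EAD$ and $EAD(EAD)^\dag$ swap roles under transposition. There is no deep difficulty — the entire argument rests on the consistency of a linear matrix equation and the standard general-solution formula — but the two-sided indexing must be handled with care to land exactly on the asymmetric placement of factors ($D(\cdots)E$ in (i) versus $(\cdots)E$ with a leading $D$ in (ii)) that appears in the statement.
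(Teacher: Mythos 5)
Your proposal is correct and follows essentially the same route as the paper: reduce the range condition to $B=DW$, observe that $EAB=E$ becomes the consistent equation $(EAD)W=E$, invoke Penrose's general-solution formula $W=(EAD)^\dag E+(I_n-(EAD)^\dag EAD)Z$, verify the reverse inclusion via the projector $EAD(EAD)^\dag$ acting as the identity on $\rr(E)=\rr(EAD)$, and obtain statement (ii) by the adjoint duality of Remark~\ref{rema1200}(i). Your bookkeeping for the dualization (the relevant product for $A^*$ is $D^*A^*E^*=(EAD)^*$) is also correct.
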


\begin{proof} Consider $A\in (\matriz{n}{m})^{\parallel D, E}_{right}$. If $B \in \matriz{m}{n}$ satisfies
$EAB = E$ and $\rr(B) \subset \rr(D)$, then according to the proof of Theorem \ref{thm301}, there exists a matrix $M$
such that $B=DM$ and $EADM=E$. Notice that the general solution of the equation $EADX=E$ is
$$
X = (EAD)^\dag E + (I_n-(EAD)^\dag EAD)Z,
$$
where $Z \in \matriz{n}{n}$ is arbitrary (see \cite[Theorem 2]{penrose}).
Hence $M = (EAD)^\dag E + (I_n-(EAD)^\dag EAD)Z$,
for some $Z \in \matriz{n}{n}$. Therefore, $B=D(EAD)^\dag E + D(I_n-(EAD)^\dag EAD)Z$.
Thus,
$$(\matriz{n}{m})^{\parallel D, E}_{right} \subseteq
\{ D(EAD)^\dag E + D(I_n-(EAD)^\dag EAD)Z : Z \in \matriz{n}{n} \}.
$$
\indent To prove the opposite inclusion, let $Z \in \matriz{n}{n}$ be arbitrary and
consider
$$
Y = D \left[(EAD)^\dag E + (I_n-(EAD)^\dag EAD)Z \right].
$$
It is evident that $\rr(Y) \subset \rr(D)$. Furthermore, according to Theorem \ref{thm301} (ii),
$\rr(E)=\rr(EAD)$. Now
$$
EAY= EAD \left[(EAD)^\dag E + (I_n-(EAD)^\dag EAD)Z \right] = EAD (EAD)^\dag E = E.
$$
\noindent In fact, if $\en$ is any column of $E$, then $\en \in \rr(E) = \rr(EAD)$ and
since $EAD(EAD)^\dag$ is the orthogonal projection onto $\rr(EAD)$, 
$EAD(EAD)^\dag \en = \en$.

\indent To prove statement (ii), apply Remark \ref{rema1200} (i) and what has been proved. 
\end{proof}

Let $D,E \in \ce_{m,n}$. If $A \in \ce_{n,m}$ is left (respectively right) $(D, E)$-invertible,
the case in which $\mathcal{I}(A)_{left}^{\parallel D, E}$ (respectively $\mathcal{I}(A)_{right}^{\parallel D, E}$) is a singleton will be studied.  

\begin{thm}\label{thm33000} Let $A \in \ce_{n,m}$ and $D,E \in \ce_{m,n}$. The following statements are equivalent.\par
\begin{enumerate}[{\rm (i)}]
\item The matrix $A$ has a unique left $(D, E)$-inverse.
\item  The matrix $A$ has a unique right $(D, E)$-inverse.
\item $\rk(D)=\rk(E)=\rk(EAD)$.
\end{enumerate}
\noindent Furthermore, in this case $\mathcal{I}(A)_{left}^{\parallel D, E}=\mathcal{I}(A)_{right}^{\parallel D, E}=\{D(EAD)^\dag E\}$.
\end{thm}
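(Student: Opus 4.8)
The plan is to read off both the uniqueness dichotomy and the explicit formula directly from the affine parametrizations in Theorem \ref{5L1}, handling the right and left cases in parallel. Throughout I would use the standard projector identifications: $(EAD)^\dag EAD$ is the orthogonal projector onto $\rr((EAD)^*)=\kk(EAD)^\perp$, so $I_n-(EAD)^\dag EAD$ projects orthogonally onto $\kk(EAD)$; dually, $EAD(EAD)^\dag$ projects onto $\rr(EAD)$, so $I_m-EAD(EAD)^\dag$ projects onto $\rr(EAD)^\perp$. I would also use the two inclusions valid for arbitrary matrices, namely $\kk(D)\subseteq\kk(EAD)$ and $\rr(EAD)\subseteq\rr(E)$.

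First I would settle $\mathrm{(ii)}\Leftrightarrow\mathrm{(iii)}$. Since having a unique right $(D,E)$-inverse presupposes that $A$ is right $(D,E)$-invertible, Theorem \ref{5L1} (i) applies and exhibits $\mathcal{I}(A)_{right}^{\parallel D,E}$ as the family $D(EAD)^\dag E+D(I_n-(EAD)^\dag EAD)Z$ with $Z$ ranging over $\ce_n$. This family is a singleton exactly when its homogeneous part vanishes identically, that is $D(I_n-(EAD)^\dag EAD)=0$. Because $I_n-(EAD)^\dag EAD$ projects onto $\kk(EAD)$, this equation means $\kk(EAD)\subseteq\kk(D)$, which together with the always-valid reverse inclusion is equivalent to $\kk(D)=\kk(EAD)$, i.e. $\rk(D)=\rk(EAD)$. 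As right $(D,E)$-invertibility already forces $\rk(E)=\rk(EAD)$ by Theorem \ref{thm301} (iii), uniqueness of the right inverse is equivalent to $\rk(D)=\rk(E)=\rk(EAD)$, and in that case the surviving term is precisely $D(EAD)^\dag E$. Conversely (iii) gives $\rk(E)=\rk(EAD)$, hence right $(D,E)$-invertibility, so the whole chain reverses.

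The equivalence $\mathrm{(i)}\Leftrightarrow\mathrm{(iii)}$ I would prove symmetrically from Theorem \ref{5L1} (ii): $\mathcal{I}(A)_{left}^{\parallel D,E}$ is the family $D(EAD)^\dag E+Z(I_m-EAD(EAD)^\dag)E$ with $Z\in\ce_m$, which collapses to a single point iff $(I_m-EAD(EAD)^\dag)E=0$. Since $I_m-EAD(EAD)^\dag$ projects onto $\rr(EAD)^\perp$, this says $\rr(E)\subseteq\rr(EAD)$, equivalently $\rr(E)=\rr(EAD)$, i.e. $\rk(E)=\rk(EAD)$; left $(D,E)$-invertibility contributes $\rk(D)=\rk(EAD)$ via Theorem \ref{thm302}, so again uniqueness is equivalent to (iii), with unique value $D(EAD)^\dag E$. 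Chaining the two equivalences yields $\mathrm{(i)}\Leftrightarrow\mathrm{(iii)}\Leftrightarrow\mathrm{(ii)}$, and the final assertion follows since both singletons equal $\{D(EAD)^\dag E\}$. The only step demanding care is the translation between the vanishing of the free summands and the rank equalities; this rests entirely on the projector identifications and the two universal inclusions recorded above, while everything else is a direct substitution into Theorem \ref{5L1}.
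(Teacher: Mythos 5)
Your proof is correct, but it takes a different route from the paper's for the core equivalences. The paper establishes $\mathrm{(i)}\Leftrightarrow\mathrm{(iii)}$ and $\mathrm{(ii)}\Leftrightarrow\mathrm{(iii)}$ by invoking the block-matrix representations of $\mathcal{I}(A)_{left}^{\parallel D,E}$ and $\mathcal{I}(A)_{right}^{\parallel D,E}$ from Theorem \ref{thm30000} (ii) and Theorem \ref{thm30001} (ii) (where the freedom sits in starred blocks whose sizes involve $r-s$ or $s-r$, so uniqueness visibly forces $r=s$), then uses Proposition \ref{pro3034} to see that the unique left and right inverses coincide, and only at the last step calls on Theorem \ref{5L1} to identify the common value as $D(EAD)^\dag E$. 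You instead extract everything from Theorem \ref{5L1} alone: the affine parametrization is a singleton iff its homogeneous part vanishes, and you translate $D\bigl(I_n-(EAD)^\dag EAD\bigr)=0$ and $\bigl(I_m-EAD(EAD)^\dag\bigr)E=0$ into $\kk(EAD)\subseteq\kk(D)$ and $\rr(E)\subseteq\rr(EAD)$ via the standard Moore--Penrose projector identities, closing the loop with the universal inclusions $\kk(D)\subseteq\kk(EAD)$ and $\rr(EAD)\subseteq\rr(E)$ and the rank criteria of Theorem \ref{thm301} (iii) and Theorem \ref{thm302} (iv). Your argument is more self-contained (no change-of-basis matrices, no appeal to Proposition \ref{pro3034}, and the formula $D(EAD)^\dag E$ falls out as the surviving term rather than being verified separately), at the cost of leaning on the projector interpretation of $(EAD)^\dag$; the paper's route makes the dimension of the solution set explicit, which is information your collapse-to-a-point criterion does not record. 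All the individual steps you use check out, including the observation that vanishing of the homogeneous summand for all $Z$ follows by specializing $Z$ to the identity.
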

\begin{proof} Note that according to Theorem \ref{thm302} (iv) and Theorem \ref{thm30000} (ii), statement (i) and statement (iii) are equivalent. 

To prove the equivalence between statements (ii) and (iii), apply Theorem \ref{thm301} (iii) and Theorem \ref{thm30001} (ii).

Now, according to Proposition \ref{pro3034}, the unique left and the unique right $(D, E)$-inverse of $A$ coincide. 
To conclude the proof, notice that according to Theorem \ref{5L1}, $D(EAD)^\dag E\in \mathcal{I}(A)_{left}^{\parallel D, E}\cap\mathcal{I}(A)_{right}^{\parallel D, E}$.
\end{proof}

Due to Theorem \ref{thm33000}, another representation of the left and the right inverses along a matrix can be given.

\begin{cor}\label{cor37000} Let $A \in \ce_{n,m}$ and $D\in \ce_{m,n}$. The matrix  $A$ is  left or right invertible along $D$ if and only if 
$\rk(D)=\rk(DAD)$. Moreover, in this case the unique left inverse of $A$ along $D$ and the unique right inverse of $A$ along $D$ coincide with the matrix $D(DAD)^\dag D$.
\end{cor}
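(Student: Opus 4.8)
The plan is to read off this corollary from the $E=D$ specialisation of the two-sided theory already developed, noting first that, by Definition~\ref{def_2.700}, a left (respectively right) inverse of $A$ along $D$ is nothing but a left (respectively right) $(D,D)$-inverse in the sense of Definition~\ref{def_2.7}. Thus every statement below about invertibility along $D$ can be restated as a statement about $(D,D)$-invertibility, and all the machinery of Sections~2 and~3 becomes available.

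To establish the equivalence I would invoke Theorem~\ref{thm3039}: it asserts that $A$ is left invertible along $D$ if and only if it is right invertible along $D$, and that both are equivalent to its condition~(iv), $\dim\kk(D)=\dim\kk(DAD)$. Since $D$ and $DAD$ both lie in $\ce_{m,n}$ and hence act on $\ce^n$, the rank--nullity identity $\rk(\cdot)+\dim\kk(\cdot)=n$ turns $\dim\kk(D)=\dim\kk(DAD)$ into $\rk(D)=\rk(DAD)$. This is exactly the stated criterion for $A$ to be left or right invertible along $D$.

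For the uniqueness and the closed form I would specialise Theorem~\ref{thm33000} to $E=D$. Its condition~(iii), $\rk(D)=\rk(E)=\rk(EAD)$, then reads $\rk(D)=\rk(DAD)$; so whenever this holds, Theorem~\ref{thm33000} simultaneously yields that $A$ has a unique left $(D,D)$-inverse and a unique right $(D,D)$-inverse, that the two coincide, and that their common value is $D(EAD)^\dag E=D(DAD)^\dag D$. Reinterpreting $(D,D)$-inverses as inverses along $D$ gives the formula claimed. One could instead extract uniqueness and coincidence directly from Corollary~\ref{cor31000}, leaving Theorem~\ref{thm33000} to supply only the explicit Moore--Penrose expression.

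The argument is little more than the substitution $E=D$ followed by bookkeeping, so the sole point demanding care---and the only place an error could creep in---is verifying that the three conditions $\rk(D)=\rk(DAD)$, $\dim\kk(D)=\dim\kk(DAD)$, and the $E=D$ instance of $\rk(D)=\rk(E)=\rk(EAD)$ are literally identical, so that the existence equivalence drawn from Theorem~\ref{thm3039} and the explicit inverse furnished by Theorem~\ref{thm33000} apply to precisely the same class of matrices. Once this is confirmed, no genuine obstacle remains.
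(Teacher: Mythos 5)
Your proposal is correct and follows essentially the same route as the paper, whose proof simply cites Theorem~\ref{thm3039}, Corollary~\ref{cor31000} and Theorem~\ref{thm33000}; your rank--nullity translation of $\dim\kk(D)=\dim\kk(DAD)$ into $\rk(D)=\rk(DAD)$ is valid since both matrices lie in $\ce_{m,n}$, and the specialisation $E=D$ in Theorem~\ref{thm33000} delivers the formula $D(DAD)^\dag D$ exactly as you describe.
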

\begin{proof}
Apply Theorem \ref{thm3039}, Corollary \ref{cor31000}  and Theorem \ref{thm33000}.
\end{proof}

It is known that a nonzero matrix can be expressed as the product of a matrix of 
full column rank and a matrix of full row rank. This factorization is known as 
a full-rank factorization and these factorizations turn out to be a 
powerful tool in the study of generalized inverses. Recall that given $H\in\ce_{m,n}$
such that $\rk(H)=r>0$, the matrix $H$ is said to have a \it full rank factorization,  \rm 
if there exist $F\in\ce_{m,r}$ and $G\in\ce_{r,n}$ such that $H=FG$. Such a factorization always exists but it is not unique (\cite[Theorem 2]{PO}).
Moreover, $F^\dag F=I_r= GG^\dag$ (\cite[Theorem 1]{PO}). Note in particular that $\rk(F)=\rk(G)=\rk(H)$. To learn more results on this topic, see \cite{ben,PO}.
In the following theorem, given $D$, $E\in\ce_{m,n}$, matrices $A\in\ce_{n,m}$ that are left or right $(D, E)$-invertible will be characterized using a full rank factorization. 
In addition, the sets $\mathcal{I}(A)_{left}^{\parallel D, E}$ and $\mathcal{I}(A)_{right}^{\parallel D, E}$ will be represented using a full rank 
factorization and the Moore-Penrose inverse.

\begin{thm}\label{th_3.4}
Let $A \in \ce_{n,m}$ and $D,E \in \ce_{m,n}$. Consider
$D=D_1D_2$ and $E=E_1E_2$ two full rank factorizations of $D$ and $E$, respectively.
\begin{enumerate}[{\rm (i)}]
\item The matrix $A$ has a right $(D,E)$-inverse if and only if $\rk(E_2)=\rk(E_2AD_1)$. In addition,
$$
\mathcal{I}(A)_{right}^{\parallel D, E}=\{D_1 \left[ (E_2AD_1)^\dag E_2 + 
(I_r-(E_2AD_1)^\dag E_2AD_1)Y\right]\colon Y \in \ce_{r,n} \},
$$ 
\noindent where $r=\rk(D)$.
\item The matrix $A$ has a left $(D,E)$-inverse if and only if $\rk(D_1)=\rk(E_2AD_1)$. Moreover,
$$
\mathcal{I}(A)_{left}^{\parallel D, E}=\{ \left[ D_1(E_2AD_1)^\dag + 
Y(I_s-(E_2AD_1)( E_2AD_1)^\dag) \right]E_2: Y \in \ce_{m,s} \},
$$
\noindent  where $s=\rk(E)$.
\end{enumerate}
\end{thm}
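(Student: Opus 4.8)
The strategy is to reduce Theorem~\ref{th_3.4} to the already established characterizations (Theorem~\ref{thm301}, Theorem~\ref{thm302}) and the representation theorem (Theorem~\ref{5L1}) by exploiting the full rank factorizations $D=D_1D_2$ and $E=E_1E_2$. The key algebraic facts I would use throughout are the column-space and null-space identities forced by such factorizations, namely $\rr(D)=\rr(D_1)$ and $\kk(E)=\kk(E_2)$, together with $D_2D_2^\dag=I_r$, $E_1^\dag E_1=I_t$ (where $r=\rk(D)$, $t=\rk(E)$), and the fact that $D_2$ is right invertible while $E_1$ is left invertible. The whole point is that $EAD=E_1(E_2AD_1)D_2$, so questions about the rank of $EAD$ transfer cleanly to questions about $E_2AD_1$, and the outer factors $E_1$, $D_2$ are invertible enough to be absorbed.

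First I would prove statement~(i). The characterization of right $(D,E)$-invertibility from Theorem~\ref{thm301}~(iii) is $\rk(E)=\rk(EAD)$. Since $E=E_1E_2$ with $E_1$ of full column rank and $E_2$ of full row rank, I have $\rk(E)=\rk(E_2)=r'$ where $r'=\rk(E)$, and using $EAD=E_1(E_2AD_1)D_2$ with $E_1$ injective and $D_2$ surjective on the relevant spaces, $\rk(EAD)=\rk(E_2AD_1)$. Thus $\rk(E)=\rk(EAD)$ becomes $\rk(E_2)=\rk(E_2AD_1)$, giving the stated criterion. For the representation of $\mathcal{I}(A)_{right}^{\parallel D,E}$, I would start from the formula in Theorem~\ref{5L1}~(i), substitute the factorizations, and simplify $(EAD)^\dag$. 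The key computation is to show $(EAD)^\dag=D_2^\dag(E_2AD_1)^\dag E_1^\dag$, which holds because $D_2^\dag$ and $E_1^\dag$ behave like one-sided inverses that commute appropriately with the Moore-Penrose operation under the full-rank hypotheses; then $D(EAD)^\dag E=D_1D_2\cdot D_2^\dag(E_2AD_1)^\dag E_1^\dag\cdot E_1E_2=D_1(E_2AD_1)^\dag E_2$, and similarly the projector term $D(I_n-(EAD)^\dag EAD)Z$ collapses to $D_1(I_r-(E_2AD_1)^\dag E_2AD_1)Y$ after re-parametrizing $Y=D_2Z$ and noting that $Y$ ranges over all of $\ce_{r,n}$ as $Z$ ranges over $\ce_n$.

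For statement~(ii), I would invoke the duality in Remark~\ref{rema1200}~(i): $A$ is left $(D,E)$-invertible if and only if $A^*$ is right $(E^*,D^*)$-invertible, and $C$ is a left $(D,E)$-inverse of $A$ if and only if $C^*$ is a right $(E^*,D^*)$-inverse of $A^*$. Applying part~(i) to $A^*$ with the full rank factorizations $E^*=E_2^*E_1^*$ and $D^*=D_2^*D_1^*$ (note the roles of the two factors swap under conjugate transpose), and then taking conjugate transposes of the resulting formulas, should yield both the criterion $\rk(D_1)=\rk(E_2AD_1)$ and the displayed description of $\mathcal{I}(A)_{left}^{\parallel D,E}$. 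Alternatively, one can argue directly via Theorem~\ref{thm302}~(iv), which gives $\rk(D)=\rk(EAD)$ as the left-invertibility criterion, transformed to $\rk(D_1)=\rk(E_2AD_1)$ by the same rank bookkeeping, and then simplify the formula in Theorem~\ref{5L1}~(ii).

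The main obstacle I anticipate is the pseudoinverse identity $(EAD)^\dag=D_2^\dag(E_2AD_1)^\dag E_1^\dag$, since Moore-Penrose inverses do not in general satisfy $(FG)^\dag=G^\dag F^\dag$. The saving feature here is precisely that $E_1$ has full column rank and $D_2$ has full row rank, so that $E_1^\dag E_1=I$ and $D_2D_2^\dag=I$; under these one-sided conditions the reverse-order law does hold. I would handle this carefully, either by verifying the four Penrose equations directly using $E_1^\dag E_1=I_t$ and $D_2D_2^\dag=I_r$, or by citing a standard reverse-order result for full-rank factors. Everything downstream of that identity is routine substitution and a change of free parameter.
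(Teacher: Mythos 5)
Your rank bookkeeping for the two criteria and the duality reduction of (ii) to (i) are fine and agree with the paper. The genuine gap is the identity $(EAD)^\dag=D_2^\dag(E_2AD_1)^\dag E_1^\dag$ on which your entire derivation of the set $\mathcal{I}(A)_{right}^{\parallel D,E}$ from Theorem \ref{5L1} rests: it is false, and the hypotheses $E_1^\dag E_1=I_s$, $D_2D_2^\dag=I_r$ do not rescue it. The two-factor reverse-order law $(FG)^\dag=G^\dag F^\dag$ for $F$ of full column rank and $G$ of full row rank requires in addition $\rk(F)=\rk(G)$ (it is the full-rank-factorization case); in your triple product the middle factor $E_2AD_1$ generally has rank $s<r$, and the law breaks. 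Concretely, take $n=m=2$, $A=E=\mat{1}{0}{0}{0}$ with $E_1=[1\ 0]^T$, $E_2=[1\ 0]$, and $D=\mat{1}{0}{1}{1}$ with $D_1=I_2$, $D_2=D$. Then $\rk(E_2)=\rk(E_2AD_1)=1$, so $A$ is right $(D,E)$-invertible, and $EAD=(EAD)^\dag=\mat{1}{0}{0}{0}$, while $D_2^\dag(E_2AD_1)^\dag E_1^\dag=\mat{1}{0}{-1}{0}$. Worse for your argument, the two ``particular solution'' terms differ:
$$
D(EAD)^\dag E=\mat{1}{0}{1}{0}\neq \mat{1}{0}{0}{0}=D_1(E_2AD_1)^\dag E_2,
$$
so the term-by-term matching of the two parametrizations collapses. (Both displayed matrices are right $(D,E)$-inverses of $A$, so the two \emph{sets} are still equal; but showing that the discrepancy is absorbed by the homogeneous term $D_1(I_r-(E_2AD_1)^\dag E_2AD_1)Y$ is an extra argument you have not supplied.)

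The paper avoids this entirely by never forming $(EAD)^\dag$: given $EAB=E$ with $\rr(B)\subseteq\rr(D)$, left-multiply $E_1E_2AB=E_1E_2$ by $E_1^\dag$ to get $E_2AB=E_2$, write $B=DM=D_1(D_2M)$, and observe that $X=D_2M$ solves the linear equation $E_2AD_1X=E_2$; Penrose's description of the solution set of that equation gives exactly the stated parametrization, and since $D_2D_2^\dag=I_r$ the map $M\mapsto D_2M$ is onto $\ce_{r,n}$, so every solution $X$ is realized. The converse inclusion is a direct check using $\rr(E_2AD_1)=\rr(E_2)$, and (ii) then follows by the duality of Remark \ref{rema1200} as you intended. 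If you wish to keep your route through Theorem \ref{5L1}, you must prove the equality of the two solution sets without the false pseudoinverse identity.
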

\begin{proof} Let $r=\rk(D)$ and $s=\rk(E)$. According to \cite[Theorem 1]{PO}, $D_2D_2^\dag = I_r$, $E_2E_2^\dag = I_s$, $D_1^\dag D_1=I_r$ and 
$E_1^\dag E_1=I_s$. In addition, since
$$
\rk(EAD) = \rk(E_1E_2AD_1D_2) \leq \rk(E_2 A D_1) \leq \rk(E_1^\dag E_1 E_2 A D_1 D_2 D_2^\dag)
\leq \rk(EAD),
$$
$\rk(EAD)=\rk(E_2AD_1)$. According to Theorem \ref{thm301} (iii), $A$ 
has a right $(D,E)$-inverse if and only if $\rk(E_2)=\rk(E)=\rk(EAD)=\rk(E_2AD_1)$. 

Let $A \in(\matriz{n}{m})^{\parallel D, E}_{right}$ and consider $B\in\mathcal{I}(A)_{right}^{\parallel D, E}$, i.e., $EAB=E$ and $\rr(B) \subseteq \rr(D)$.
Since $E_1E_2AB=E_1E_2$, multiplying by $E_1^\dag$ on the left hand side of this equation,  $E_2AB=E_2$. In addition, since
$\rr(B) \subseteq \rr(D)$, there exists $M \in \ce_n$ such that $B=DM$. Therefore, 
$E_2AD_1(D_2M)=E_2$, and according to \cite[Theorem 2]{penrose}, 
there exists $Y \in \ce_{r,n}$ such that
$D_2M= (E_2AD_1)^\dag E_2 + \left[I_r-(E_2AD_1)^\dag (E_2AD_1)\right]Y$. Thus,
$B=DM=D_1D_2M$ implies 
$$
B = D_1 \left[ 
(E_2AD_1)^\dag E_2 + \left(I_r-(E_2AD_1)^\dag E_2AD_1\right)Y\right].
$$
\indent Now suppose that  $B \in \ce_{m,n}$  has this form. Observe that 
$B=D_1Z$ for some matrix $Z$. Thus, $B=D_1D_2D_2^\dag Z = DD_2^\dag Z$
implies that $\rr(B) \subseteq \rr(D)$. In addition, 
$$
EAB = E_1E_2AD_1 \left[ 
(E_2AD_1)^\dag E_2 + \left(I_r-(E_2AD_1)^\dag E_2AD_1\right)Y \right]
= E_1E_2AD_1 (E_2AD_1)^\dag E_2.
$$
Since $\rk(E_2AD_1)=\rk(E_2)$ and $\rr(E_2AD_1) \subseteq \rr(E_2)$, $\rr(E_2AD_1)=\rr(E_2)$.
Therefore,
$E_2AD_1 (E_2AD_1)^\dag = P_{\rr(E_2AD_1)} = P_{\rr(E_2)}$, which 
implies $E_2AD_1 (E_2AD_1)^\dag E_2=E_2$. Thus,
$EAB=E_1E_2=E$. In particular, $B\in \mathcal{I}(A)_{right}^{\parallel D, E}$.

To prove statement (ii), apply Remark \ref{rema1200} (i) and what has been proved.
\end{proof}

\indent Next two particular cases will be derived from Theorem \ref{th_3.4}.

\begin{cor}\label{cor3800} Let $D,E \in \ce_{m,n}$ and consider $D=D_1D_2$ and $E=E_1E_2$ two full rank factorizations of $D$ and $E$, respectively.
The matrix $A \in \ce_{n,m}$ is left and right $(D, E)$-invertible if and only if $\rk(E_2)=\rk(D_1)=\rk(E_2AD_1)$. Moreover, in this case, 
$$
\mathcal{I}(A)_{left}^{\parallel D, E}=\mathcal{I}(A)_{right}^{\parallel D, E}=\{D_1(E_2AD_1)^{-1} E_2\}.
$$
\end{cor}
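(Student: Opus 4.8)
The plan is to derive this corollary directly from Theorem \ref{th_3.4}. Since $A$ is left and right $(D,E)$-invertible precisely when it is \emph{both} left $(D,E)$-invertible and right $(D,E)$-invertible, I would first invoke both parts of that theorem: part (ii) gives left $(D,E)$-invertibility if and only if $\rk(D_1)=\rk(E_2AD_1)$, and part (i) gives right $(D,E)$-invertibility if and only if $\rk(E_2)=\rk(E_2AD_1)$. Conjoining these two equivalences yields exactly the stated condition $\rk(E_2)=\rk(D_1)=\rk(E_2AD_1)$, so the ``if and only if'' assertion is immediate.

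Next I would record the sizes and ranks coming from the two full rank factorizations. Writing $r=\rk(D)$ and $s=\rk(E)$, the factorizations furnish $D_1\in\ce_{m,r}$, $D_2\in\ce_{r,n}$, $E_1\in\ce_{m,s}$, $E_2\in\ce_{s,n}$ with $\rk(D_1)=r$ and $\rk(E_2)=s$. Hence $E_2AD_1\in\ce_{s,r}$, and the rank equality $\rk(E_2)=\rk(D_1)$ forces $s=r$. Thus $E_2AD_1$ is a \emph{square} matrix of order $r$ whose rank equals $r$, i.e.\ it is invertible; in particular $(E_2AD_1)^\dag=(E_2AD_1)^{-1}$.

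Finally, I would substitute this observation back into the two representation formulas of Theorem \ref{th_3.4}. Because $E_2AD_1$ is invertible, the projection-correction terms collapse: $I_r-(E_2AD_1)^\dag E_2AD_1=0$ in the right-inverse formula and $I_s-(E_2AD_1)(E_2AD_1)^\dag=0$ in the left-inverse formula, for every admissible free parameter $Y$. Consequently both $\mathcal{I}(A)_{right}^{\parallel D,E}$ and $\mathcal{I}(A)_{left}^{\parallel D,E}$ reduce to the single element $D_1(E_2AD_1)^{-1}E_2$, which in particular confirms that the two sets coincide (in agreement with Proposition \ref{pro3034}).

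I do not anticipate a serious obstacle: the statement is a clean specialization of Theorem \ref{th_3.4}, and the only point requiring a moment of care is verifying that the rank equality $\rk(E_2)=\rk(D_1)$ genuinely makes $E_2AD_1$ square, so that its Moore--Penrose inverse becomes an ordinary inverse and the free terms parametrizing the two sets vanish.
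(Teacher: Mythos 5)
Your proof is correct. The equivalence part is handled exactly as in the paper: both arguments conjoin the two rank criteria of Theorem \ref{th_3.4} (i) and (ii). For the representation of the inverse sets, however, you take a slightly different and arguably more direct route. The paper first invokes Theorem \ref{thm33000} --- via the rank identities $\rk(D)=\rk(D_1)$, $\rk(E)=\rk(E_2)$ and $\rk(EAD)=\rk(E_2AD_1)$ --- to conclude that $\mathcal{I}(A)_{left}^{\parallel D, E}=\mathcal{I}(A)_{right}^{\parallel D, E}$ is a singleton, then observes that $D_1(E_2AD_1)^\dag E_2$ lies in both sets by Theorem \ref{th_3.4}, and only at the very end converts $(E_2AD_1)^\dag$ into $(E_2AD_1)^{-1}$. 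You instead note at once that $E_2AD_1\in\ce_{s,r}$ with $s=\rk(E_2)=\rk(D_1)=r=\rk(E_2AD_1)$ is square and nonsingular, so the correction terms $I_r-(E_2AD_1)^\dag E_2AD_1$ and $I_s-(E_2AD_1)(E_2AD_1)^\dag$ vanish identically and the two parametrized families of Theorem \ref{th_3.4} collapse, for every choice of the free parameter $Y$, to the single matrix $D_1(E_2AD_1)^{-1}E_2$. This makes the singleton property and the coincidence of the two sets fall out of the parametrization itself, avoiding the detour through Theorem \ref{thm33000} and Proposition \ref{pro3034}; both arguments are equally rigorous, and yours is marginally more self-contained.
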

\begin{proof} The first statement can be derived from Theorem \ref{th_3.4}. \par

Note that $\rk(D)=\rk(D_1)$, $\rk(E)=\rk(E_2)$ and $\rk(EAD)=\rk(E_2AD_1)$ (see the proof of Theorem \ref{th_3.4}).
Then, according to Theorem \ref{thm33000}, $\mathcal{I}(A)_{left}^{\parallel D, E}=\mathcal{I}(A)_{right}^{\parallel D, E}$ is a singleton.
In addition, according to Theorem \ref{th_3.4}, $D_1(E_2AD_1)^\dag E_2\in \mathcal{I}(A)_{left}^{\parallel D, E}\cap\mathcal{I}(A)_{right}^{\parallel D, E}$.
Thus, 
$$
\mathcal{I}(A)_{left}^{\parallel D, E}=\mathcal{I}(A)_{right}^{\parallel D, E}=\{D_1(E_2AD_1)^\dag E_2\}.
$$
\noindent  However, since $\rk(E_2)=r=\rk(D_1)$ and $(E_2AD_1)^\dag \in\ce_r$ is such that $\rk(E_2AD_1)=r$,
$(E_2AD_1)^\dag=(E_2AD_1)^{-1}$.
\end{proof}

\indent To end this section, the case of left and right inverses along a  matrix will be presented.

\begin{cor}\label{cor3900} Let $D\in \ce_{m,n}$ and consider $D=D_1D_2$ a full rank factorizations of $D$.
The matrix  $A \in \ce_{n,m}$
is left or right invertible along $D$ if and only if $\rk(D_1)=\rk(D_2AD_1)$. Moreover, in this case the unique left inverse of $A$ along $D$ and the unique right inverse of $A$ along $D$ 
coincide with $D_1(D_2AD_1)^{-1} D_2$.
\end{cor}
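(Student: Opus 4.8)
The plan is to obtain this statement as the specialization $E=D$ of Corollary~\ref{cor3800}, combined with the equivalence of left and right invertibility along $D$ established in Theorem~\ref{thm3039}. First I would invoke Theorem~\ref{thm3039}, which shows that $A$ is left invertible along $D$ if and only if it is right invertible along $D$. Hence the hypothesis that $A$ be \emph{left or right} invertible along $D$ is the same as requiring $A$ to be simultaneously left and right invertible along $D$; in the language of Definition~\ref{def_2.700} this means precisely that $A$ is both left and right $(D,D)$-invertible.

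Next I would apply Corollary~\ref{cor3800} with $E=D$, taking the full rank factorization $E=E_1E_2$ to be $D=D_1D_2$, so that $E_1=D_1$ and $E_2=D_2$. Recall that for a full rank factorization $D=D_1D_2$ one has $\rk(D_1)=\rk(D_2)=\rk(D)$. Consequently the invertibility criterion of Corollary~\ref{cor3800}, namely $\rk(E_2)=\rk(D_1)=\rk(E_2AD_1)$, becomes $\rk(D_2)=\rk(D_1)=\rk(D_2AD_1)$; since the equality $\rk(D_1)=\rk(D_2)$ holds automatically, this collapses to the single stated condition $\rk(D_1)=\rk(D_2AD_1)$.

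Finally, under this condition Corollary~\ref{cor3800} produces the unique common left and right $(D,D)$-inverse $D_1(E_2AD_1)^{-1}E_2$, which upon substituting $E_2=D_2$ is exactly $D_1(D_2AD_1)^{-1}D_2$. Because a left (respectively right) $(D,D)$-inverse is by definition a left (respectively right) inverse of $A$ along $D$, this simultaneously gives the claimed formula and the uniqueness of both one-sided inverses along $D$. As the whole argument is a direct substitution into Corollary~\ref{cor3800}, I do not expect any genuine obstacle; the only point needing attention is the bookkeeping observation that $\rk(D_1)=\rk(D_2)$ reduces the three-fold rank condition of Corollary~\ref{cor3800} to the single equality appearing in the statement, and that the $(D,D)$-inverses of Definition~\ref{def_2.7} are, for $E=D$, the inverses along $D$ of Definition~\ref{def_2.700}.
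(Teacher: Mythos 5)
Your proposal is correct and follows exactly the paper's own argument: the paper proves this corollary by citing Theorem~\ref{thm3039} together with Corollary~\ref{cor3800} specialized to $E=D$. Your additional bookkeeping remarks (that $\rk(D_1)=\rk(D_2)=\rk(D)$ collapses the three-fold rank condition, and that $(D,D)$-inverses are inverses along $D$) simply make explicit what the paper leaves implicit.
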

\begin{proof} Apply Theorem \ref{thm3039} and Corollary \ref{cor3800}.
\end{proof}


\section{The $(D, E)$-inverse of arbitrary matrices}

\indent First of all the $(b, c)$-inverse will be extended to rectangular matrices.
Compare with Definition~\ref{def1} and recall the observation
before Definition \ref{def_2.7}. 

\begin{df}\label{def3035} Let $A \in \ce_{n,m}$ and $D,E \in \ce_{m,n}$. The matrix $A$ is said to be $(D, E)$-{\rm invertible}, 
if there exist a matrix $X\in \ce_{m,n}$ such that the following conditions hold.
$$
XAD=D,\hskip.3truecm EAX=E,\hskip.3truecm \rr(X)\subseteq \rr(D),\hskip.3truecm \kk(E)\subseteq\kk(X).
$$
\end{df}

Under the same conditions as in Definition \ref{def3035}, note that $\rr(X)\subseteq \rr(D)$ (respectively $\kk(E)\subseteq\kk(X)$)
is equivalent to $\rr(X)= \rr(D)$  (respectively $\kk(E)\subseteq\kk(X)$). In the following theorem, it will be proved that the $(D, E)$-inverse of a matrix $A$ is unique, if it exists.\par

\begin{thm}\label{thm3036} Let $A \in \ce_{n,m}$ and $D,E \in \ce_{m,n}$. The following statements are equivalent.\par
\begin{enumerate}[{\rm (i)}]
\item The $(D, E)$-inverse of the matrix $A$ exists.
\item The matrix $A$ is both left and right $(D, E)$-invertible.
\end{enumerate}
\noindent Furthermore, in this case,  the $(D, E)$-inverse of the matrix $A$ is unique.
\end{thm}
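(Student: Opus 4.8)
The plan is to observe that the four defining conditions of the $(D, E)$-inverse in Definition~\ref{def3035} split cleanly into the two conditions defining a left $(D, E)$-inverse and the two conditions defining a right $(D, E)$-inverse, and then to invoke Proposition~\ref{pro3034} to glue these together into a single two-sided inverse.

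For the implication (i) $\Rightarrow$ (ii), I would take a $(D, E)$-inverse $X$ and simply read off from Definition~\ref{def3035} that the pair of conditions $XAD = D$ and $\kk(E) \subseteq \kk(X)$ makes $X$ a left $(D, E)$-inverse in the sense of Definition~\ref{def_2.7}(i), while $EAX = E$ and $\rr(X) \subseteq \rr(D)$ makes $X$ a right $(D, E)$-inverse in the sense of Definition~\ref{def_2.7}(ii). Hence $A$ is simultaneously left and right $(D, E)$-invertible. This direction is immediate and requires no computation.

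For (ii) $\Rightarrow$ (i), this is where the real content lies, and the key tool is Proposition~\ref{pro3034}. Since $A$ is both left and right $(D, E)$-invertible, Proposition~\ref{pro3034} guarantees that the unique left $(D, E)$-inverse and the unique right $(D, E)$-inverse of $A$ coincide with a single matrix $R \in \ce_{m,n}$. Setting $X = R$, I would recover $XAD = D$ and $\kk(E) \subseteq \kk(X)$ from the fact that $X$ is a left $(D, E)$-inverse, and $EAX = E$ together with $\rr(X) \subseteq \rr(D)$ from the fact that $X$ is a right $(D, E)$-inverse. All four conditions of Definition~\ref{def3035} are therefore met, so $X$ is a $(D, E)$-inverse of $A$.

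Finally, for uniqueness, I would note that any $(D, E)$-inverse is simultaneously a left and a right $(D, E)$-inverse of $A$, by exactly the reading used for (i) $\Rightarrow$ (ii); then the uniqueness clause of Proposition~\ref{pro3034} forces every such matrix to equal $R$. The main (and essentially only) obstacle here is conceptual rather than technical: recognizing that the coincidence of the one-sided inverses already established in Proposition~\ref{pro3034} is precisely what is needed to manufacture a single two-sided $(D, E)$-inverse. Once this is observed, no further calculation is required.
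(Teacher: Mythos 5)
Your proposal is correct and follows essentially the same route as the paper: the authors likewise dismiss (i) $\Rightarrow$ (ii) as immediate from the definitions and obtain both (ii) $\Rightarrow$ (i) and uniqueness by invoking Proposition~\ref{pro3034}, exactly as you do. Your write-up merely makes explicit the bookkeeping that the paper leaves to the reader.
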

\begin{proof} It is enough to prove that statement (ii) implies statement (i). To this end, apply Proposition  \ref{pro3034}.
Proposition  \ref{pro3034} also proves that there is only one $(D, E)$-inverse of $A$, when it exists.
\end{proof}

\indent According to Theorem \ref{thm3036}, if the matrix $A\in \ce_{n,m}$ has a $(D, E)$-inverse ($D,E \in \ce_{m,n}$), then
it will be denoted by $A^{\parallel (D, E)}$. In addition, note that according to Definition \ref{def1100} and \cite[Corollary 3.7]{KVC}, when the matrices $A$, $D$ and $E$
are square, Definition \ref{def3035} reduces to  the $(b, c)$-inverse (\cite[Definition 1.3]{Drazin}, i.e., Definition \ref{def1}). In the following remark some basic results on this inverse that can be derived from
what has been proved in sections 2 and  3 will be collected.

\begin{rema}\label{rema40000}\rm Let $D, E \in \ce_{m,n}$ and consider $A\in \ce_{n,m}$.
\begin{enumerate}[{\rm (i)}]
\item The matrix $A$ is $(D, E)$-invertible if and only if $A^*\in\ce_{m,n}$ is $(E^*, D^*)$-invertible ($E^*$, $D^*\in\ce_{n,m}$).
Moreover, in this case $(A^*)^{\parallel (E^*, D^*)}=(A^{\parallel (D, E)})^*$. Apply Theorem \ref{thm3036} and Remark \ref{rema1200} (i)-(ii).
\item Let $D', E' \in \ce_{m,n}$ be such that $\rr(D)=\rr(D')$ and $\kk(E)=\kk(E')$. Necessary and sufficient for $A^{\parallel (D, E)}$ to exist is that
$A^{\parallel (D', E')}$ exists. Furthermore, in this case, $A^{\parallel (D', E')}=A^{\parallel (D, E)}$. Apply Theorem \ref{thm3036} and Remark \ref{rema1200} (iii)-(iv).
\item Theorem \ref{thm3033} and Theorem \ref{thm33000} characterize matrices $A$ such that $A^{\parallel (D, E)}$ exists. 
\item When $A$ is $(D, E)$-invertible, $A^{\parallel (D, E)}$ can be represented as 
in Propostion~\ref{pro3034}, Corollary~\ref{cor30005} and Remark~\ref{nota1}.
\end{enumerate} 
\end{rema}

Although some results have been presented in connection to left and right $(D, E)$-inverses ($D,E \in \ce_{m,n}$), they deserve to 
be considered again for the $(D, E)$-inverse. Recall that according to \cite[Remark 2.4]{Drazin} (see also \cite[Theorem 2.2]{Drazin}), when the marices $A$, $D$, $E$ are square,
$A$ is $(D, E)$-invertible if and only if $\rk(D)=\rk(EAD)=\rk(E)$. In the following theorem this result will be extended to arbitrary matrices.

\begin{thm}\label{thm60000} Let $A \in \ce_{n,m}$ and $D,E \in \ce_{m,n}$. The following statements are equivalent.\par
\begin{enumerate}[{\rm (i)}]
\item The $(D, E)$-inverse of $A$ exists.
\item $\rk(D)=\rk(E)=\rk(EAD)$.
\end{enumerate}
\noindent Furthermore, in this case $A^{\parallel (D, E)}=D(EAD)^\dag E$.
\end{thm}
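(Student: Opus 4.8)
The plan is to treat this theorem as a consequence of the one-sided theory already developed, rather than proving anything from scratch. The key structural fact is Theorem~\ref{thm3036}: the $(D,E)$-inverse of $A$ exists if and only if $A$ is simultaneously left and right $(D,E)$-invertible, and in that case it is unique. So I would immediately reduce statement (i) to the conjunction of left $(D,E)$-invertibility and right $(D,E)$-invertibility, and then translate each of these into a rank condition using the characterizations of Section~3.

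For the equivalence of (i) and (ii), I would argue as follows. By Theorem~\ref{thm3036}, (i) holds precisely when $A$ is both left and right $(D,E)$-invertible. Now Theorem~\ref{thm301}~(iii) says that $A$ is right $(D,E)$-invertible if and only if $\rk(E)=\rk(EAD)$, while Theorem~\ref{thm302}~(iv) says that $A$ is left $(D,E)$-invertible if and only if $\rk(D)=\rk(EAD)$. Combining these two equivalences, $A$ is both left and right $(D,E)$-invertible exactly when $\rk(E)=\rk(EAD)$ and $\rk(D)=\rk(EAD)$, that is, exactly when $\rk(D)=\rk(E)=\rk(EAD)$. This gives (i) $\Leftrightarrow$ (ii) at once; note that this is essentially the content already packaged in Theorem~\ref{thm33000}~(iii), so one could alternatively cite that theorem directly for the rank condition.

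For the closed-form expression, I would invoke Theorem~\ref{thm33000}. Under the hypothesis $\rk(D)=\rk(E)=\rk(EAD)$, that theorem guarantees that $A$ has a unique left $(D,E)$-inverse and a unique right $(D,E)$-inverse, and that both coincide with $D(EAD)^\dag E$. The only point requiring a moment's care is to observe that the $(D,E)$-inverse $X=A^{\parallel (D,E)}$ is, by Definition~\ref{def3035}, at the same time a left $(D,E)$-inverse (since $XAD=D$ and $\kk(E)\subseteq\kk(X)$) and a right $(D,E)$-inverse (since $EAX=E$ and $\rr(X)\subseteq\rr(D)$). Hence, by the uniqueness in Theorem~\ref{thm33000}, $X$ must equal the single element $D(EAD)^\dag E$, which yields the stated formula.

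There is no genuine analytic obstacle here: the whole statement is a repackaging of the left/right results proved earlier, and the argument is purely a matter of citing Theorem~\ref{thm3036}, Theorem~\ref{thm301}~(iii), Theorem~\ref{thm302}~(iv) and Theorem~\ref{thm33000} in the right order. The one place where I would be deliberately careful is the logical bookkeeping of the forward direction — confirming that existence of the two-sided inverse really does force \emph{both} one-sided rank equalities — together with the elementary but essential remark that the two-sided inverse is itself a one-sided inverse, which is exactly what lets the uniqueness in Theorem~\ref{thm33000} pin down its value as $D(EAD)^\dag E$.
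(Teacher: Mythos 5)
Your proposal is correct and follows essentially the same route as the paper, whose proof is simply to apply Theorem~\ref{thm3036}, Proposition~\ref{pro3034} and Theorem~\ref{thm33000}; your unpacking of the rank equivalence via Theorem~\ref{thm301}~(iii) and Theorem~\ref{thm302}~(iv) is just the content of Theorem~\ref{thm33000} made explicit. The observation that the two-sided inverse is itself both a left and a right $(D,E)$-inverse, so that the uniqueness in Theorem~\ref{thm33000} forces it to equal $D(EAD)^\dag E$, is exactly the step the paper delegates to Proposition~\ref{pro3034}.
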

\begin{proof} Apply Theorem \ref{thm3036}, Proposition \ref{pro3034} and Theorem \ref{thm33000}.
\end{proof}

Now a corollary will be derived from Theorem \ref{thm60000}.

\begin{cor}\label{cor_rank}
Let $A \in \matriz{n}{m}$ and $D,E \in \matriz{m}{n}$ be such that $\ADE$ exists.
Then $\rk(AD)=\rk(EA)=\rk(E)=\rk(D)$.
\end{cor}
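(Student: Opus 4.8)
The plan is to deduce everything from Theorem \ref{thm60000}, which already does the main work. By that theorem, the existence of $\ADE$ is equivalent to the chain of rank equalities $\rk(D)=\rk(E)=\rk(EAD)$; in particular $\rk(E)=\rk(D)$ is handed to us for free, and both of these common values equal $\rk(EAD)$. It therefore only remains to squeeze $\rk(AD)$ and $\rk(EA)$ between $\rk(EAD)$ and the ranks of $D$ and $E$.

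For this I would use only the elementary fact that multiplying a matrix on either side can never increase its rank. Writing $EAD=E(AD)=(EA)D$ gives the two inequality chains $\rk(EAD)\le\rk(AD)\le\rk(D)$ and $\rk(EAD)\le\rk(EA)\le\rk(E)$. Since the hypothesis forces $\rk(EAD)=\rk(D)$ and $\rk(EAD)=\rk(E)$, both chains collapse, yielding $\rk(AD)=\rk(D)$ and $\rk(EA)=\rk(E)$. Combining these with $\rk(E)=\rk(D)$ produces $\rk(AD)=\rk(EA)=\rk(E)=\rk(D)$, as desired.

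There is essentially no obstacle here: the only thing to verify is that the correct prior result is invoked, after which the argument is a two-line rank squeeze. An equally short alternative would be to observe that, by Theorem \ref{thm3036}, the existence of $\ADE$ means $A$ is both left and right $(D,E)$-invertible, so Theorem \ref{thm3033} applies; its statement (iii) gives $\rk(D)=\rk(AD)$, statement (iv) gives $\rk(E)=\rk(EA)$, and its concluding clause gives $\rk(E)=\rk(D)$, again closing the loop. I would present the rank-monotonicity argument, since it is self-contained and avoids appealing to the full structural characterization.
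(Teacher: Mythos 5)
Your proposal is correct. The paper's own proof simply cites Theorem \ref{thm60000} together with Theorem \ref{thm3033}: the latter's statements (iii) and (iv) (reached via Theorem \ref{thm3036}) supply $\rk(D)=\rk(AD)$ and $\rk(E)=\rk(EA)$, and its concluding clause supplies $\rk(E)=\rk(D)$ --- this is exactly the alternative you sketch at the end. Your primary argument is genuinely different and more elementary: from Theorem \ref{thm60000} you take only the rank chain $\rk(D)=\rk(E)=\rk(EAD)$, and then the fact that multiplication cannot increase rank, applied to the factorizations $EAD=E(AD)=(EA)D$, squeezes $\rk(AD)$ between $\rk(EAD)$ and $\rk(D)$, and $\rk(EA)$ between $\rk(EAD)$ and $\rk(E)$. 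This buys self-containedness --- no appeal to the structural characterization of Theorem \ref{thm3033}, with its direct-sum decompositions $\rr(AD)\oplus\kk(E)=\ce^n$ and $\rr(D)\oplus\kk(EA)=\ce^m$, is needed --- at essentially no cost, whereas the paper's route leans on machinery already established for other purposes. Both arguments are valid and equally short.
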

\begin {proof} Apply Theorem \ref{thm60000} and Theorem \ref{thm3033}.
\end{proof}

\begin{rema}\rm Let $A \in \matriz{n}{m}$ and $D,E \in \matriz{m}{n}$.
Note that the condition in Corollary \ref{cor_rank} does not imply that $\ADE$ exists.
In fact, consider
$$
A = \mat{0}{1}{1}{0}, \qquad D = E = \mat{1}{0}{0}{0}.
$$
Since $EAD = 0$ and $\rk(D) = \rk(E) = 1$, according to  Theorem \ref{thm60000}, $\ADE$ 
does not exist. However
$$
AD = \mat{0}{0}{1}{0}, \qquad EA = \mat{0}{1}{0}{0},
$$
which lead to $\rk(AD) = 1$ and $\rk(EA) = 1$. 
\end{rema}
 
In the following theorem the $(D, E)$-inverse will be characterized using full rank factorizations.

\begin{thm}\label{thm61000} Let $D,E \in \ce_{m,n}$ and consider $D=D_1D_2$ and $E=E_1E_2$ two full rank factorizations of $D$ and $E$, respectively.
The matrix $A \in \ce_{n,m}$ is  $(D, E)$-invertible if and only if $\rk(E_2)=\rk(D_1)=\rk(E_2AD_1)$. Moreover, in this case, $A^{\parallel (D, E)}=D_1(E_2AD_1)^{-1} E_2$.
\end{thm}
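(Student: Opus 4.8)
The plan is to obtain this statement by combining the existence criterion for the $(D,E)$-inverse with the full-rank-factorization description of the two-sided invertible case that is already in hand. First I would recall Theorem \ref{thm3036}: the $(D,E)$-inverse of $A$ exists if and only if $A$ is simultaneously left and right $(D,E)$-invertible, and in that case it is unique. This reduces the existence part of the present theorem to deciding when $A$ is both left and right $(D,E)$-invertible in terms of the factors $D_1, D_2, E_1, E_2$.

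For that step I would invoke Corollary \ref{cor3800}, which states precisely that $A$ is left and right $(D,E)$-invertible if and only if $\rk(E_2)=\rk(D_1)=\rk(E_2AD_1)$, and which also identifies the then unique and common left and right $(D,E)$-inverse as $D_1(E_2AD_1)^{-1}E_2$. Combining these two facts immediately yields both the asserted equivalence and the candidate formula.

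The only point that still needs justification is that this common one-sided inverse is genuinely the $(D,E)$-inverse and not merely a left and a right inverse. Here I would argue as follows: a matrix that is at once a left $(D,E)$-inverse and a right $(D,E)$-inverse automatically satisfies the four defining conditions of Definition \ref{def3035} (namely $XAD=D$ and $\kk(E)\subseteq\kk(X)$ from the left side, together with $EAX=E$ and $\rr(X)\subseteq\rr(D)$ from the right side). By Proposition \ref{pro3034}, when $A$ is two-sided invertible the unique left $(D,E)$-inverse and the unique right $(D,E)$-inverse coincide, so the single matrix $D_1(E_2AD_1)^{-1}E_2$ furnished by Corollary \ref{cor3800} meets all four conditions and is therefore equal to $\ADE$.

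I do not anticipate a genuine obstacle, since the work has essentially been front-loaded into Theorem \ref{th_3.4} and Corollary \ref{cor3800}; the residual subtlety is purely bookkeeping, namely keeping straight that $\rk(D)=\rk(D_1)$, $\rk(E)=\rk(E_2)$ and $\rk(EAD)=\rk(E_2AD_1)$, all of which are verified inside the proof of Theorem \ref{th_3.4}, so that the condition $\rk(E_2)=\rk(D_1)=\rk(E_2AD_1)$ is exactly the translation of the criterion $\rk(D)=\rk(E)=\rk(EAD)$ of Theorem \ref{thm60000}. As a cross-check one could alternatively start from Theorem \ref{thm60000}, whose formula $\ADE=D(EAD)^\dag E$ must then be shown to collapse to $D_1(E_2AD_1)^{-1}E_2$ under the given full rank factorizations; that reconciliation is again carried out within the proof of Corollary \ref{cor3800}.
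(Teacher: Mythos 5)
Your proposal is correct and follows essentially the same route as the paper, whose proof consists precisely of applying Theorem \ref{thm3036} together with Corollary \ref{cor3800}. The additional remarks you make (that the common left and right $(D,E)$-inverse satisfies the four conditions of Definition \ref{def3035}, and the rank bookkeeping $\rk(D)=\rk(D_1)$, $\rk(E)=\rk(E_2)$, $\rk(EAD)=\rk(E_2AD_1)$) simply make explicit what the paper leaves implicit.
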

\begin{proof} Apply Theorem \ref{thm3036} and Corollary \ref{cor3800}.
\end{proof}

In the following corollaries two particular cases will be derived from Theoerm \ref{thm61000}.

\begin{cor}\label{cor_rankbis} Let $A \in \matriz{n}{m}$ and $D,E \in \matriz{m}{n}$ be such that
exists $\ADE$ exists. The following statements hold.
\begin{enumerate}
\item[{\rm (i)}] If the columns of $D$ are linearly independent, then
$\ADE = D(AD)^{-1}$.
\item[{\rm (ii)}] If the rows of $E$ are linearly independent, then
$\ADE = (EA)^{-1}E$.
\end{enumerate}
\end{cor}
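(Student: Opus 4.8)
The plan is to reduce both parts to Theorem \ref{thm61000} by choosing especially economical full rank factorizations, and then to obtain part (ii) from part (i) by the transpose duality recorded in Remark \ref{rema40000} (i). First I would collect the rank data forced by the hypothesis: since $\ADE$ exists, Corollary \ref{cor_rank} gives $\rk(AD)=\rk(EA)=\rk(E)=\rk(D)$, and Theorem \ref{thm60000} gives $\rk(D)=\rk(E)=\rk(EAD)$. These equalities are what will make the extra factors introduced by Theorem \ref{thm61000} cancel.

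For (i), the assumption that the columns of $D$ are linearly independent means exactly $\rk(D)=n=:r$, so by the above $\rk(E)=n$ and $\rk(AD)=n$, whence $AD\in\ce_n$ is invertible. I would then use the trivial full rank factorization $D=D_1D_2$ with $D_1=D$ and $D_2=I_n$ (legitimate since $\rk(D)=n$), together with any full rank factorization $E=E_1E_2$; because $\rk(E)=n$, the factor $E_2\in\ce_n$ is a square matrix of rank $n$, hence invertible. Theorem \ref{thm61000} then yields $\ADE=D_1(E_2AD_1)^{-1}E_2=D(E_2AD)^{-1}E_2$, and since $E_2$ is invertible, $(E_2AD)^{-1}=(AD)^{-1}E_2^{-1}$, so the $E_2$ factors cancel and $\ADE=D(AD)^{-1}$.

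For (ii), I would invoke Remark \ref{rema40000} (i): $A^*$ is $(E^*,D^*)$-invertible and $(A^*)^{\parallel(E^*,D^*)}=(\ADE)^*$. The rows of $E$ being linearly independent is the same as the columns of $E^*$ being linearly independent, so part (i) applied to $A^*$ with the pair $(E^*,D^*)$ (in which $E^*$ plays the role of the first matrix) gives $(A^*)^{\parallel(E^*,D^*)}=E^*(A^*E^*)^{-1}$. Taking conjugate transposes and using $(A^*E^*)^*=EA$ turns this into $\ADE=((A^*E^*)^*)^{-1}E=(EA)^{-1}E$, with $EA$ invertible since $\rk(EA)=\rk(E)=m$ by Corollary \ref{cor_rank}.

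The computations are routine; the only step requiring care is the rank bookkeeping. The crux is recognizing that full column rank of $D$ forces $E_2$ to be square and invertible, via the equality $\rk(E)=\rk(D)$ coming from the existence hypothesis, since this is precisely what lets the spurious $E_2$ factors in Theorem \ref{thm61000} cancel; without that equality the cancellation fails. Part (ii) then introduces no genuinely new work once the duality of Remark \ref{rema40000} (i) is used to transport (i), provided one keeps track of the conjugate transposes correctly, in particular the identity $(A^*E^*)^*=EA$.
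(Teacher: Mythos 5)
Your proof is correct and follows essentially the same route as the paper: both parts reduce to Theorem \ref{thm61000} via the trivial full rank factorization $D = D\cdot I_n$ forced by full column rank, with the existence hypothesis supplying $\rk(E)=\rk(D)$ so that the remaining factor is invertible. The only cosmetic differences are that the paper chooses the factorization $E = E\cdot I_n$ outright, so no cancellation of an $E_2$ factor is needed, and proves (ii) by the symmetric direct argument (using $E=I_mE$ and $D=I_mD$) rather than by transposition via Remark \ref{rema40000} (i).
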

\begin{proof}
If $D$ has full column rank, then $D=DI_n$ is a full rank factorization of $D$. 
Since $\ADE$ exists, according to Theorem \ref{thm60000}, $\rk(E)=\rk(D)$. Consequently, $E$ has full column rank and 
$E=EI_n$ is a full rank factorization of $E$. Then, according to Theorem \ref{thm61000}, 
$\ADE=D(AD)^{-1}$. 

Apply a similar argument to prove statement (ii), using in particular the full rank factorizatons $E=I_m E$ and $D=I_mD$ and 
Theorem \ref{thm61000}.
\end{proof}

\begin{cor}\label{c_rank_one} Let $A \in \matriz{n}{m}$ and $D, E \in \matriz{m}{n}$ be such that $\rk(D) =\rk(E)= 1$. 
If $D = \dn_1 \dn_2^*$ and $E = \en_1 \en_2^*$ \rm(\it$\dn_1, \en_1\in \matriz{m}{1}$, 
$\dn_2, \en_2\in \matriz{n}{1}$\rm) \it are full rank factorizations of $D$ and $E$, respectively, 
then $\ADE$ exists if and only if $\en_2^* A \dn_1 \neq 0$. Moreover, in
this case,
$$
\ADE = \frac{1}{\en_2^* A \dn_1} \dn_1 \en_2^*.
$$
\end{cor}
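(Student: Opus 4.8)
The plan is to specialize Theorem~\ref{thm61000} to the present rank-one situation rather than to argue from scratch. Since $\rk(D)=\rk(E)=1$, the given decompositions $D=\dn_1\dn_2^*$ and $E=\en_1\en_2^*$ are genuine full rank factorizations with $r=1$: here $D_1=\dn_1$ and $E_1=\en_1$ are nonzero $m\times 1$ columns, while $D_2=\dn_2^*$ and $E_2=\en_2^*$ are nonzero $1\times n$ rows. First I would record the two identifications that drive everything: $E_2AD_1=\en_2^*A\dn_1$ is a $1\times 1$ matrix, that is, a complex scalar, and $\rk(\dn_1)=\rk(\en_2^*)=1$.

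With these identifications in hand, the existence criterion of Theorem~\ref{thm61000}, namely $\rk(E_2)=\rk(D_1)=\rk(E_2AD_1)$, collapses to $1=1=\rk(\en_2^*A\dn_1)$. Since a $1\times 1$ matrix has rank $1$ exactly when its single entry is nonzero, this reduces precisely to $\en_2^*A\dn_1\neq 0$, which is the asserted characterization of the existence of $\ADE$.

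For the formula I would again invoke Theorem~\ref{thm61000}, which in the invertible case gives $\ADE=D_1(E_2AD_1)^{-1}E_2=\dn_1(\en_2^*A\dn_1)^{-1}\en_2^*$. Because $\en_2^*A\dn_1$ is a nonzero scalar, its inverse as a $1\times 1$ matrix is simply the reciprocal $1/(\en_2^*A\dn_1)$, and pulling this factor out of the product yields $\ADE=\frac{1}{\en_2^*A\dn_1}\,\dn_1\en_2^*$, exactly as stated.

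There is no substantial obstacle here, since the corollary is a direct specialization of Theorem~\ref{thm61000}; the only points needing a moment's care are the harmless observations that a rank-one matrix written as an outer product is already presented as a full rank factorization, and that for a $1\times 1$ matrix the notions ``nonsingular'', ``rank one'', and ``nonzero entry'' coincide, with matrix inversion degenerating to taking a reciprocal.
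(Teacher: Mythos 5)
Your proposal is correct and follows exactly the paper's own route: specialize Theorem~\ref{thm61000} to the rank-one full rank factorizations $D=\dn_1\dn_2^*$, $E=\en_1\en_2^*$, observe that $E_2AD_1=\en_2^*A\dn_1$ is a scalar whose rank is $1$ precisely when it is nonzero, and read off the formula with the $1\times 1$ inverse as a reciprocal. Nothing to add.
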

\begin{proof} According to Theorem \ref{thm61000},
the $(D, E)$-inverse of the matrix $A$ exists if and only if
$\rk(\dn_1) = \rk(\en_2^*) = \rk(\en_2^* A \dn_1)$. Therefore,
$\ADE$ exists if and only if $\rk(\en_2^* A \dn_1)=1$. But observe that
$\en_2^* A \dn_1$ is a complex number. Thus,  the first part of the theorem has been proved. 
The expression of $\ADE$  also follows from Theorem \ref{thm61000}.	
\end{proof}

Next an application of Theorem \ref{thm61000} will lead to representations of several
generalized inverses in terms of a full rank representation.
The explicit expression for $A^\dagger$ is attributed to C.C. MacDufee by Ben-Israel and Greville
in \cite{ben}. Ben-Israel and Greville report that around 1959, MacDufee was the first to point
out that a full-rank factorization of $A$ leads to the mentioned formula.

\begin{cor} \label{cor61050} Let
$A \in \ce_n$ and consider  a full rank factorization $A=FG$. Then, the following statements hold.
\begin{enumerate}[{\rm (i)}]
\item $A^\dag = G^* (F^* A G^*)^{-1}F^*$.
\item  
$A$ is group invertible if and only if $GF$ is nonsingular; in this case,
$A^\# = F(GF)^{-2}G$ {\rm{(}}see {\rm \cite{cline}}{\rm{)}}.
\item $A$ is core invertible if and only if $GF$ is nonsingular; in this case, $\core{A}=F(GF)^{-1}F^\dag$.
\item $A$ is dual core invertible if and only if $GF$ is nonsingular; in this case, $\cored{A}=G^\dag(GF)^{-1}G$.
\item $A_{M,N}^\dag = N^{-1}G^* (F^*MAN^{-1}G^*)^{-1}F^*M$, where $M$ and $N$ are 
nonsingular and positive.
\end{enumerate}
\end{cor}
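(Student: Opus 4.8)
The plan is to exhibit each of the five inverses as a $(D,E)$-inverse with $D=E$ and then read off both its existence and its value from Theorem \ref{thm61000}. Recall that for square matrices the $(D,D)$-inverse coincides with the inverse along $D$, and that $\ce_n$ is a ring with the conjugate-transpose involution, so the theorems recalled in Section 2 apply. By those theorems $A^\dag$ is the inverse along $A^*$, $A^\#$ the inverse along $A$, $\core{A}$ the inverse along $AA^*$, $\cored{A}$ the inverse along $A^*A$, and $A^\dag_{M,N}$ the inverse along $N^{-1}A^*M$; here every matrix in $\ce_n$ is Moore-Penrose invertible, so the hypotheses of Theorem \ref{thmA} are automatically met. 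Thus each inverse equals $A^{\parallel(D,D)}=D_1(D_2AD_1)^{-1}D_2$ for the appropriate $D=D_1D_2$, and the whole corollary reduces to choosing a full rank factorization of $D$, verifying the rank condition of Theorem \ref{thm61000}, and simplifying.

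Before starting I would record the identities $F^\dag=(F^*F)^{-1}F^*$ and $G^\dag=G^*(GG^*)^{-1}$, valid since $F$ has full column rank and $G$ full row rank; together with the nonsingularity of the $r\times r$ Gram matrices $F^*F$ and $GG^*$, these drive every simplification. For (i), take $A^*=G^*F^*$, so $D_1=G^*$, $D_2=F^*$ and $D_2AD_1=F^*AG^*=(F^*F)(GG^*)$ is nonsingular; Theorem \ref{thm61000} gives both the (always valid) existence and the formula $A^\dag=G^*(F^*AG^*)^{-1}F^*$. For (ii), take $A=FG$, so $D_1=F$, $D_2=G$ and $D_2AD_1=GAF=(GF)^2$; as $GF\in\ce_r$, the rank condition $\rk((GF)^2)=r$ holds exactly when $GF$ is nonsingular, which then yields $A^\#=F(GF)^{-2}G$.

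For (iii) and (iv) I would use the full rank factorizations $AA^*=F\,(GG^*F^*)$ and $A^*A=(G^*F^*F)\,G$, which are genuine (the second factor has full row rank, respectively the first has full column rank, because $GG^*$ and $F^*F$ are nonsingular). In both cases $D_2AD_1$ is a product of $GF$ with the nonsingular matrices $GG^*$ and $F^*F$, so $\rk(D_2AD_1)=\rk(GF)$ and existence is again equivalent to $GF$ nonsingular. The remaining step is algebraic: cancelling those factors gives, for (iii), $D_1(D_2AD_1)^{-1}D_2=F(GF)^{-1}(F^*F)^{-1}F^*=F(GF)^{-1}F^\dag$, and for (iv), $D_1(D_2AD_1)^{-1}D_2=G^*(GG^*)^{-1}(GF)^{-1}G=G^\dag(GF)^{-1}G$, using the two identities above. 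For (v), taking $N^{-1}A^*M=(N^{-1}G^*)(F^*M)$ gives $D_1=N^{-1}G^*$, $D_2=F^*M$ and hence immediately $A^\dag_{M,N}=N^{-1}G^*(F^*MAN^{-1}G^*)^{-1}F^*M$.

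The main obstacle is the nonsingularity bookkeeping, which is most delicate in (v): one must check that $F^*MAN^{-1}G^*$ is invertible. Here I would exploit positivity, writing $M=P^2$ and $N^{-1}=Q^2$ with $P,Q$ Hermitian and nonsingular, so that $F^*MAN^{-1}G^*=(F^*MF)(GN^{-1}G^*)=(PF)^*(PF)\,(QG^*)^*(QG^*)$ is a product of two nonsingular $r\times r$ Gram matrices, the factors $PF$ and $QG^*$ having full column rank. The analogous checks in (iii) and (iv) follow the same template and reduce, after cancellation, to the nonsingularity of $GF$; confirming that the stated factorizations really are full rank factorizations and tracking which $r\times r$ blocks are invertible is the only point requiring care.
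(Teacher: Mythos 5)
Your proposal is correct and, for parts (i), (ii) and (v), follows the paper's proof essentially verbatim: identify each generalized inverse as a $(D,E)$-inverse and read off existence and formula from Theorem \ref{thm61000}. The genuine difference is in (iii) and (iv). The paper handles the core inverse via the $(A,A^*)$-inverse characterization $\core{A}=A^{\parallel (A,A^*)}$ from \cite{rdd} (so $D=A=FG$, $E=A^*=G^*F^*$ and $E_2AD_1=F^*FGF$), obtains the equivalence with the nonsingularity of $GF$ by quoting that core invertibility is equivalent to group invertibility, and then gets (iv) for free from $\cored{A}=[\core{(A^*)}]^*$. You instead stay entirely within the $D=E$ setting, using Theorem \ref{thmA} to realize $\core{A}$ and $\cored{A}$ as the inverses along $AA^*$ and $A^*A$ and manufacturing the full rank factorizations $AA^*=F\,(GG^*F^*)$ and $A^*A=(G^*F^*F)\,G$; after cancelling the nonsingular Gram matrices $F^*F$ and $GG^*$ this yields the same closed forms, and the equivalence with the nonsingularity of $GF$ drops out of the rank condition of Theorem \ref{thm61000} rather than being imported from \cite{bt}. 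Both routes are sound; yours requires slightly messier factorizations but is more uniform across the five parts, and your explicit verification in (v) that $F^*MAN^{-1}G^*=(PF)^*(PF)(QG^*)^*(QG^*)$ is nonsingular (writing $M=P^2$, $N^{-1}=Q^2$) supplies a check that the paper leaves implicit.
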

\begin{proof}
Observe that $A^*=G^*F^*$ is a full rank factorization of $A^*$.\par

\indent Recall that according to \cite[p. 1912]{Drazin}, $A$ is Moore-Penrose invertible if and only if $A$ is $(A^*, A^*)$-invertible and in this case $A^\dag=A^{\parallel (A^*, A^*)}$.
Now apply Theorem \ref{thm61000} with $D=E=A^*=G^*F^*$. 

According to \cite[p. 1910]{Drazin}, $A$ is group invertible if and only if $A$ is $(A, A)$-invertible and in this case $A^\#=A^{\parallel (A, A)}$. 
According to Theorem \ref{thm61000}, $A$ is group invertible if and only if $\rk(G)=\rk(F)=\rk(GFGF)$. Thus, 
if $r=\rk(A)$, then $GF\in\ce_r$ is invertible.  To prove the formula representing $A^\#$, apply Theorem \ref{thm61000} with $D=E=A=FG$.

Recall that according to \cite[p. 684]{bt},  $A$ is core 
invertible if and only if it is group invertible. Thus, according to what has been proved, the first part of statement (iii) holds.
In addition, according to \cite[Theorem 4.4]{rdd} (i), necessary and sufficient for $A$ to be core invertible is that $A$ is $(A, A^*)$-invertible and in this case
$\core{A}=A^{\parallel (A, A^*)}$. Now, apply  Theorem \ref{thm61000} with $D=A=FG$ and $E=A^*=G^*F^*$. Observe however first that if $r=\rk(A)$, then
$F^*F\in \ce_r$ is such that $\rk(F^*F)=\rk(F)=r$, and recall that $(F^*F)^{-1}F^*=F^\dag$ (\cite[Theorem 1]{PO}).  Then,
$$
\core{A} = A^{\parallel (A, A^*)} = F(F^*FGF)^{-1} F^* = F(GF)^{-1}(F^*F)^{-1}F^*=F(GF)^{-1}F^\dag.
$$

Note that $A$ is dual core dual invertible if and only if $A^*$ is core invertible and
$\cored{A}=[\core{(A^*)}]^*$. Thus, the first part of statement (iv) can be derived from what has been proved and
$$
\cored{A}=[{(G^*(F^*G^*)^{-1}(G^*)^\dag)}]^*=G^\dag(GF)^{-1}G.
$$

According to \cite[Theorem  3.2]{bb},  $A$ is weighted
Moore-Penrose invertible with weights $M$ and $N$ if and only if
$A$ is invertible along $N^{-1} A^* M$. In addition, according to \cite[Definition~6.1]{Drazin},
this is equivalent to the fact that $A$ is $(N^{-1} A^* M, N^{-1} A^* M)$-invertible. Now apply
Theorem \ref{thm61000} with $D=E=N^{-1} A^* M$ and consider the full rank factorization $N^{-1} A^* M=(N^{-1}G^*)(F^*M)$. 
\end{proof}

To end this section,
the set of all matrices $A\in \ce_{n,m}$ such that they are $(D, E)$-invertible will be studied ($D$, $E \in \ce_{m,n}$).
First some notation need  to be introduced.

\indent  Given $D$, $E \in \ce_{m,n}$, let $\matriz{n}{m}^{\parallel D, E}$ denote the set of all matrices $A\in \ce_{n,m}$ such that the $(D, E)$-inverse of $A$ exists, i.e.,
$$
\matriz{n}{m}^{\parallel D, E}=\{A\in \ce_{n,m}\colon A^{\parallel (D, E)}\hbox{ exists }\}.
$$

\begin{thm}\label{thm4001} Let $D$, $E \in \ce_{m,n}$. 
Necessary and sufficient for $\matriz{n}{m}^{\parallel D, E}\neq\emptyset$
is that $\rk(E)=\rk(D)$. Moreover, in this case, 
$\matriz{n}{m}^{\parallel D, E} = 
(\matriz{n}{m})_{left}^{\parallel D, E}=(\matriz{n}{m})_{right}^{\parallel D, E}$. 
\end{thm}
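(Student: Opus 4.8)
The plan is to reduce the statement entirely to results already established about the one-sided invertibility sets, since no genuinely new computation is needed. The first step is to invoke Theorem~\ref{thm3036}, according to which a matrix $A \in \ce_{n,m}$ is $(D,E)$-invertible if and only if it is simultaneously left and right $(D,E)$-invertible. This yields the set identity $\matriz{n}{m}^{\parallel D, E} = (\matriz{n}{m})_{left}^{\parallel D, E} \cap (\matriz{n}{m})_{right}^{\parallel D, E}$, which converts both the nonemptiness assertion and the claimed triple equality into statements purely about the two one-sided sets.

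For the necessity of $\rk(E) = \rk(D)$, I would assume $\matriz{n}{m}^{\parallel D, E} \neq \emptyset$ and pick any $A$ in it. By the set identity just noted, $A$ is both left and right $(D,E)$-invertible, so in particular $(\matriz{n}{m})_{left}^{\parallel D, E} \neq \emptyset$ and $(\matriz{n}{m})_{right}^{\parallel D, E} \neq \emptyset$; Corollary~\ref{cor3050} (the implication (ii)$\Rightarrow$(i)) then forces $\rk(E) = \rk(D)$. Alternatively, one could apply Theorem~\ref{thm60000} directly: the existence of $\ADE$ gives $\rk(D) = \rk(E) = \rk(EAD)$, whence $\rk(E) = \rk(D)$ at once.

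For the converse together with the equality, I would start from $\rk(E) = \rk(D)$ and apply Corollary~\ref{cor3050} (the implication (i)$\Rightarrow$(iii)), which gives $(\matriz{n}{m})_{left}^{\parallel D, E} = (\matriz{n}{m})_{right}^{\parallel D, E} \neq \emptyset$. Since the two one-sided sets coincide, their intersection equals each of them, so the set identity from the first step becomes $\matriz{n}{m}^{\parallel D, E} = (\matriz{n}{m})_{left}^{\parallel D, E} = (\matriz{n}{m})_{right}^{\parallel D, E}$, and this common set is nonempty. This simultaneously establishes nonemptiness and the desired triple equality.

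The proof presents no real obstacle: every ingredient is already in hand, and the only thing to watch is the logical bookkeeping, namely routing the two directions through the correct implications of Corollary~\ref{cor3050} and observing that \emph{``left set equals right set''} is exactly what collapses the intersection appearing in the reduction of the first step. If anything deserves attention, it is merely confirming that the equality in Corollary~\ref{cor3050}(iii) already carries the nonemptiness with it, so that no separate existence argument (such as the explicit construction in Theorem~\ref{thm303}) is required here.
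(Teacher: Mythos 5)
Your proposal is correct and follows essentially the same route as the paper, whose proof is simply ``Apply Theorem~\ref{thm3036} and Corollary~\ref{cor3050}''; you have merely made explicit the bookkeeping of how the intersection $(\matriz{n}{m})_{left}^{\parallel D, E} \cap (\matriz{n}{m})_{right}^{\parallel D, E}$ collapses once the two one-sided sets coincide. No gaps.
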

\begin{proof} Apply Theorem \ref{thm3036} and Corollary \ref{cor3050}. 
\end{proof}

Observe that Corollary \ref{cor30005} gives an explicit representation of 
$\matriz{n}{m}^{\parallel D, E}$.


\section{Invertible matrices along a fixed matrix} 
Now the case $D=E$ will be considered. Compare with Definition \ref{def1000} and recall the observation
before Definition \ref{def_2.7}. 

\begin{df}\label{def3037} Let $A \in \ce_{n,m}$ and $D\in \ce_{m,n}$. The matrix $A$ is said to be {\rm invertible along the matrix} $D$, 
if there exists a matrix $X\in \ce_{m,n}$ such that the following conditions hold.
$$
XAD=D=DAX,\qquad \rr(X)\subseteq \rr(D),\qquad \kk(D)\subseteq\kk(X).
$$
\end{df}

According to Theorem \ref{thm3036}, if the inverse of the matrix $A\in \ce_{n,m}$ along the matrix $D\in \ce_{m,n}$ exists, then
it is unique and  it will be denoted by $A^{\parallel D}$.  In addition,  when $A$ and $D$ are square matrices, since according to \cite[Proposition 6.1]{Drazin}, the inverse of $A$ along $D$ coincides with 
the $(D, D)$-inverse of $A$, Definition \ref{def3037} reduces to the notion of the inverse along an element in a ring of square matrices (\cite[Definition 4]{Mary}, i.e., Definition \ref{def1000}).
Moreover, under the same conditions as in Definition \ref{def3037},
note that  $\rr(X)\subseteq \rr(D)$ (respectively $\kk(D)\subseteq\kk(X)$) is equivalent to $\rr(X)=\rr(D)$ (respectively $\kk(D)=\kk(X)$).
Now characterizations of the inverse along a matrix will be given.

\begin{thm}\label{thm4100} Let $A \in \ce_{n,m}$ and $D \in \ce_{m,n}$. The following statements are equivalent.
\end{thm}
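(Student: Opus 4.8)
The plan is to obtain Theorem~\ref{thm4100} as a direct specialization to $E=D$ of the machinery already built for one-sided $(D,E)$-inverses, so that essentially no new computation is needed beyond assembling prior results. Since the excerpt cuts off before the enumerated list, I anticipate that the equivalent conditions are the natural $D=E$ analogues of those in Theorems~\ref{thm3039}, \ref{thm3036} and \ref{thm60000}: namely that $A^{\parallel D}$ exists; that $A$ is left invertible along $D$; that $A$ is right invertible along $D$; the rank/kernel/range conditions $\rk(D)=\rk(DAD)$, $\kk(D)=\kk(DAD)$, $\rr(D)=\rr(DAD)$; the group-invertibility conditions on $AD$ (with $\rk(AD)=\rk(D)$) and on $DA$ (with $\rk(DA)=\rk(D)$); and the bijectivity of the map $\phi$ of Theorem~\ref{th_3.5} taken with $E=D$. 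The backbone of the argument is a short chain of implications anchored on two facts.

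First I would invoke Theorem~\ref{thm3036} in the case $E=D$: the inverse of $A$ along $D$ exists if and only if $A$ is simultaneously left and right invertible along $D$. Next I would apply Theorem~\ref{thm3039}, which already shows that for a fixed $D$ the notions of left invertibility along $D$ and right invertibility along $D$ coincide, and are in turn equivalent to each of $\kk(D)=\kk(DAD)$, $\dim\kk(D)=\dim\kk(DAD)$, the two group-invertibility conditions, and the bijectivity of $\phi$. Combining these two theorems collapses the three statements ``$A^{\parallel D}$ exists'', ``$A$ is left invertible along $D$'' and ``$A$ is right invertible along $D$'' into a single equivalence class: if either one-sided notion holds then by Theorem~\ref{thm3039} so does the other, whence Theorem~\ref{thm3036} yields the two-sided inverse, while the converse is immediate from Definition~\ref{def3037}. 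The remaining rank and subspace conditions are then read off from Theorem~\ref{thm3039} after two elementary observations, which hold without any hypothesis on $A$: the inclusion $\kk(D)\subseteq\kk(DAD)$ (since $D\xn=\on$ forces $DAD\xn=\on$) makes $\kk(D)=\kk(DAD)$ equivalent to $\dim\kk(D)=\dim\kk(DAD)$, and via the identity $n=\rk(DAD)+\dim\kk(DAD)=\rk(D)+\dim\kk(D)$ this is equivalent to $\rk(D)=\rk(DAD)$; dually, $\rr(DAD)\subseteq\rr(D)$ makes $\rr(D)=\rr(DAD)$ equivalent to the same rank equality.

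For the ``furthermore'' clause I would specialize Theorem~\ref{thm60000} (equivalently Corollary~\ref{cor37000}) to $E=D$: once the equivalent conditions are known to hold, Theorem~\ref{thm60000} with $E=D$ gives the explicit formula $A^{\parallel D}=D(DAD)^\dag D$. The step I expect to require the most care is organizational rather than mathematical, namely making explicit that passing from the one-sided to the two-sided notion of Definition~\ref{def3037} adds no genuine constraint. This is not automatic: for general $E\neq D$ left and right $(D,E)$-invertibility can fail independently (compare Corollary~\ref{cor3051}), and it is precisely the symmetry $E=D$, together with the equivalence of the two one-sided notions in Theorem~\ref{thm3039}, that forces the collapse. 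I would therefore state this reduction at the outset, after which each listed condition is matched to its counterpart in Theorems~\ref{thm3039} and \ref{thm60000} by a one-line citation.
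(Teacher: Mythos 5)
Your proposal is correct and follows exactly the paper's route: the theorem's actual list consists only of the three statements ``$A$ is left invertible along $D$'', ``$A$ is right invertible along $D$'' and ``$A$ is invertible along $D$'', and the paper's proof is precisely the two citations you identify, namely Theorem~\ref{thm3036} (existence of the two-sided inverse is equivalent to simultaneous left and right invertibility) combined with Theorem~\ref{thm3039} (the two one-sided notions coincide when $E=D$). The additional rank, range and $\phi$-bijectivity conditions you anticipated, as well as the formula $A^{\parallel D}=D(DAD)^\dag D$, are not part of this theorem but appear separately in Remark~\ref{rema41000} and Corollary~\ref{cor70000}, so including them does no harm and your reasoning for them is also sound.
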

\begin{enumerate}[{\rm (i)}]
\item The matrix $A$ is left invertible along $D$.
\item The matrix $A$ is right invertible along $D$.
\item The matrix $A$ is  invertible along $D$.
\end{enumerate}
\begin{proof}Apply Theorem \ref{thm3036} and Theorem \ref{thm3039}.
\end{proof}

In the following remark several results on this inverse that can be derived from
what has been proved in sections 2, 3 and 4 will be collected.

\begin{rema}\label{rema41000}\rm Let $D\in \ce_{m,n}$ and consider $A\in \ce_{n,m}$.
\begin{enumerate}[{\rm (i)}]
\item The matrix $A$ is invertible along $D$ if and only if $A^*\in\ce_{m,n}$ is invertible along $D^*\in\ce_{n,m}$.
Moreover, in this case $(A^*)^{\parallel  D^*}=(A^{\parallel D)})^*$. Apply Remark \ref{rema40000} (i) to the case $E=D$.
\item Let $D' \in \ce_{m,n}$ be such that $\rr(D)=\rr(D')$ and $\kk(D)=\kk(D')$. Necessary and sufficient for $A^{\parallel D}$ to exists is that
$A^{\parallel D'}$ exists. Furthermore, in this case, $A^{\parallel D'}=A^{\parallel D}$. Apply Remark \ref{rema40000} (ii) to the case $E=D$.
\item Theorem \ref{thm3038} and Theorem \ref{thm3039} characterize matrices $A$ such that $A^{\parallel D}$ exists. Compare Theorem \ref{thm3039} (v)-(vi)
with \cite[Theorem 7]{Mary} and \cite[Theorem 2.1]{MP}.
\item When $A$ is invertible along $D$, $A^{\parallel D}$ can be represented as in Corollary \ref{cor31000} and Corollary \ref{cor30200}. 
\item According to Remark \ref{nota1} applied to the case $D=E$, another representation of $A^{\parallel D}$, when it exists, is the following. 
In the decomposition $\ce^n=\kk(D)\oplus\mathcal{X}$, it holds that
$$
\kk(A^{\| D}) = \kk(D), \qquad A^{\| D} \yn = \phi^{-1}(\yn), \ \yn \in \mathcal{X},
$$
\noindent where $\phi\colon\rr(D)\to\mathcal{X}$ is the map of Theorem \ref{th_3.5}.
\item According to Corollary \ref{cor_rank} applied to the case $D=E$, if $A^{\parallel D}$ exists, then  $\rk(AD)=\rk(DA)=\rk(D)$.
\end{enumerate} 
\end{rema}

Some results, however, deserve to be presented separately.

\begin{cor}\label{cor70000} Let $A \in \ce_{n,m}$ and $D \in \ce_{m,n}$. The following statements are equivalent.\par
\begin{enumerate}[{\rm (i)}]
\item $A$ is invertible along $D$.
\item $\rk(D)=\rk(DAD)$.
\end{enumerate}
\noindent Furthermore, in this case $A^{\parallel D}=D(DAD)^\dag D$.
\end{cor}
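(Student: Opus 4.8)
The plan is to recognize the inverse along $D$ as the special case $E=D$ of the $(D,E)$-inverse and then invoke Theorem \ref{thm60000}. First I would observe that Definition \ref{def3037} is nothing but Definition \ref{def3035} with $E$ replaced by $D$: the four conditions $XAD=D$, $EAX=E$, $\rr(X)\subseteq\rr(D)$, $\kk(E)\subseteq\kk(X)$ become exactly $XAD=D=DAX$, $\rr(X)\subseteq\rr(D)$, $\kk(D)\subseteq\kk(X)$. Hence $A$ is invertible along $D$ if and only if $A$ is $(D,D)$-invertible, and whenever the inverse exists one has $A^{\parallel D}=A^{\parallel (D,D)}$. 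By Theorem \ref{thm4100} this two-sided notion moreover coincides with left and with right invertibility along $D$, so there is no ambiguity in the object being characterized.

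Next I would apply Theorem \ref{thm60000} to the pair $(D,E)=(D,D)$. That theorem asserts that $A^{\parallel (D,D)}$ exists if and only if $\rk(D)=\rk(D)=\rk(DAD)$; since the first two ranks are trivially equal, this collapses to the single requirement $\rk(D)=\rk(DAD)$, which is precisely statement (ii). This immediately settles the equivalence (i) $\Leftrightarrow$ (ii). For the closed form, Theorem \ref{thm60000} yields $A^{\parallel (D,D)}=D(DAD)^\dag D$ in the case $E=D$, and therefore $A^{\parallel D}=D(DAD)^\dag D$, as claimed.

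I do not anticipate any genuine obstacle: the entire content is the specialization $E=D$ of an already-established result, so uniqueness and the explicit formula transfer verbatim. The only point deserving a moment's care is confirming that Definition \ref{def3037} really is the $E=D$ instance of Definition \ref{def3035}, which is exactly what the paragraph following Definition \ref{def3037} records. As an alternative route, one could instead combine Corollary \ref{cor37000}, which already states both the rank criterion $\rk(D)=\rk(DAD)$ and the formula $D(DAD)^\dag D$ for one-sided invertibility along $D$, with Theorem \ref{thm4100} in order to pass from the one-sided to the two-sided formulation.
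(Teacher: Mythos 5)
Your proposal is correct and follows exactly the paper's own route: the paper's proof is simply ``Apply Theorem \ref{thm60000} for the case $D=E$.'' Your additional remarks on Definition \ref{def3037} being the $E=D$ instance of Definition \ref{def3035} and the alternative via Corollary \ref{cor37000} are consistent with the surrounding text but add nothing beyond the specialization the paper already performs.
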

\begin{proof} Apply Theorem \ref{thm60000} for the case $D=E$.
\end{proof}

\begin{cor}\label{cor71000} Let $D\in \ce_{m,n}$ and consider $D=D_1D_2$ a full rank factorizations of $D$.
The matrix $A \in \ce_{n,m}$ is  invertible along $D$ if and only if $\rk(D_1)=\rk(D_2AD_1)$. Moreover, in this case, $A^{\parallel D}=D_1(D_2AD_1)^{-1} D_2$.
\end{cor}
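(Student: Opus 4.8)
The plan is to obtain this statement as the specialization $E=D$ of the full-rank-factorization characterization of the $(D,E)$-inverse already established in Theorem \ref{thm61000}. Recall from the discussion following Definition \ref{def3037} that the inverse of $A$ along $D$ coincides with the $(D,D)$-inverse of $A$, and that by Theorem \ref{thm4100} being invertible along $D$ is equivalent to being both left and right $(D,D)$-invertible. So it suffices to feed the data $E=D$ into Theorem \ref{thm61000} and read off what it says.

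Concretely, I would apply Theorem \ref{thm61000} taking $E=D$ and choosing for $E$ the very same full rank factorization $E=E_1E_2=D_1D_2$ already fixed for $D$, so that $E_2=D_2$ and $E_1=D_1$. (Independence from the particular factorization chosen is not an issue here, since we are free to select any full rank factorization of $E=D$; alternatively one may invoke Remark \ref{rema41000}(ii).) Theorem \ref{thm61000} then asserts that $A$ is $(D,D)$-invertible if and only if $\rk(E_2)=\rk(D_1)=\rk(E_2AD_1)$, that is, $\rk(D_2)=\rk(D_1)=\rk(D_2AD_1)$, and that in this case $A^{\parallel(D,D)}=D_1(D_2AD_1)^{-1}D_2$.

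It then remains only to simplify the rank condition. Since $D=D_1D_2$ is a full rank factorization, $\rk(D_1)=\rk(D_2)=\rk(D)=r$ (the basic property of full rank factorizations recalled before Theorem \ref{th_3.4}), so the equality $\rk(D_2)=\rk(D_1)$ is automatic and the triple condition collapses to the single requirement $\rk(D_1)=\rk(D_2AD_1)$. Combining this with the identification $A^{\parallel D}=A^{\parallel(D,D)}$ yields both the stated criterion and the formula $A^{\parallel D}=D_1(D_2AD_1)^{-1}D_2$. There is essentially no genuine obstacle in this argument, as it is pure specialization; the only point that requires a word of care is confirming that the superfluous conditions in Theorem \ref{thm61000} are automatically met for $E=D$, which is exactly what the constancy of rank across a full rank factorization provides.
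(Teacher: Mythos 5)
Your proof is correct and follows exactly the paper's route: the paper's own proof is simply ``Apply Theorem \ref{thm61000} for the case $D=E$,'' and you have carried out that specialization, adding the (correct) observation that $\rk(D_1)=\rk(D_2)=\rk(D)$ collapses the triple rank condition to the single stated one.
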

\begin{proof} Apply Theorem \ref{thm61000} for the case $D=E$.
\end{proof}

\begin{cor}\label{cor_rankbister} Let $A \in \matriz{n}{m}$ and $D \in \matriz{m}{n}$. 
\begin{enumerate}
\item[{\rm (i)}] Suppose that  the columns of $D$ are linearly independent. Necessary and sufficient for $A^{\parallel D}$
to exists is that $AD\in\ce_n$ is nonsingular. Moreover, in this case, $A^{\parallel D} = D(AD)^{-1}$.
\item[{\rm (ii)}] Suppose that the rows of $D$ are linearly independent. The inverse of $A$ along $D$ exists if and only if
$DA\in\ce_m$ is invertible. Moreover, in this case, $A^{\parallel D} = (DA)^{-1}D$.
\end{enumerate}
\end{cor}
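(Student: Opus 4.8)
The plan is to deduce both statements directly from Corollary~\ref{cor71000}, which characterizes invertibility along $D$ and supplies the formula $A^{\parallel D}=D_1(D_2AD_1)^{-1}D_2$ in terms of any full rank factorization $D=D_1D_2$. The only real choice to make is which full rank factorization to use, and under each hypothesis there is an obvious trivial one that places $D$ itself as one of the two factors.

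For statement (i), when the columns of $D$ are linearly independent we have $\rk(D)=n$, so $D\in\ce_{m,n}$ has full column rank. I would therefore take the full rank factorization $D=D_1D_2$ with $D_1=D\in\ce_{m,n}$ and $D_2=I_n\in\ce_n$; this is legitimate since $D_1$ has full column rank $n=\rk(D)$ and $D_2=I_n$ has full row rank $n$. With this choice $D_2AD_1=AD\in\ce_n$, so Corollary~\ref{cor71000} says that $A$ is invertible along $D$ if and only if $\rk(D_1)=\rk(D_2AD_1)$, i.e. $n=\rk(AD)$, which for the square matrix $AD\in\ce_n$ is exactly nonsingularity. The same corollary then yields $A^{\parallel D}=D_1(D_2AD_1)^{-1}D_2=D(AD)^{-1}$.

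For statement (ii), when the rows of $D$ are linearly independent we have $\rk(D)=m$, and now I would use the mirror-image factorization $D=D_1D_2$ with $D_1=I_m\in\ce_m$ and $D_2=D\in\ce_{m,n}$, again legitimate because $D_1=I_m$ has full column rank $m$ and $D_2=D$ has full row rank $m=\rk(D)$. Here $D_2AD_1=DA\in\ce_m$, so Corollary~\ref{cor71000} gives existence if and only if $\rk(DA)=m$, that is, $DA$ invertible, together with $A^{\parallel D}=D_1(D_2AD_1)^{-1}D_2=(DA)^{-1}D$. Alternatively, I could obtain (ii) from (i) by passing to adjoints through Remark~\ref{rema41000}(i): the rows of $D$ being independent is the same as the columns of $D^*$ being independent, so (i) applied to $A^*$ and $D^*$ gives that $(A^*)^{\parallel D^*}$ exists iff $A^*D^*$ is nonsingular, with $(A^*)^{\parallel D^*}=D^*(A^*D^*)^{-1}$, and conjugating back (using $(A^*D^*)^*=DA$) recovers both the criterion and the stated formula.

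There is essentially no hard step here: the whole content is recognizing that each hypothesis makes $D$ itself a factor of a degenerate full rank factorization. The only points needing a line of care are confirming that these trivial factorizations genuinely meet the full-rank requirements of the definition before invoking Corollary~\ref{cor71000}, and observing that $\rk(AD)=n$ (respectively $\rk(DA)=m$) coincides with nonsingularity precisely because $AD\in\ce_n$ (respectively $DA\in\ce_m$) is square.
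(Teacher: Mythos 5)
Your proof is correct and takes essentially the same route as the paper: the paper's own proof of the underlying Corollary~\ref{cor_rankbis} rests on exactly the degenerate full rank factorizations $D=DI_n$ and $D=I_mD$ that you use, and your verification that these qualify as full rank factorizations is sound. The only cosmetic difference is that the paper derives the existence criterion from the group-invertibility characterization in Theorem~\ref{thm3039}~(v)--(vi) and then invokes Corollary~\ref{cor_rankbis} for the formula, whereas you read both the criterion and the formula off Corollary~\ref{cor71000} in a single step.
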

\begin{proof} Suppose that  the columns of $D$ are linearly independent. According to Theorem \ref{thm4100} and Theorem \ref{thm3039} (v),
the characterization of the inverse of $A$ along $D$ holds. To prove the formula that represents $A^{\parallel D}$, apply Corollary \ref{cor_rankbis} (i).

To prove  statement (ii), apply a similar argument to the one used to prove statement (i), using in particular Theorem \ref{thm3039} (vi) and Corollary \ref{cor_rankbis} (ii).
\end{proof}
\begin{cor}\label{c_rank_oneter} 
Let $A \in \matriz{n}{m}$ and $D \in \matriz{m}{n}$.
If $\rk(D)= 1$, 
then $A^{\parallel D}$ exists if and only if $\tr(AD) \neq 0$. Moreover, in
this case,
$$
A^{\parallel D}=\frac{1}{{\rm tr}(AD)}D.
$$
\end{cor}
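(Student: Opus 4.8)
The plan is to obtain this as the special case $E=D$ of Corollary~\ref{c_rank_one}, combined with the fact (noted after Definition~\ref{def3037}) that the inverse of $A$ along $D$ coincides with the $(D,D)$-inverse of $A$. Since $\rk(D)=1$, the matrix $D$ admits a full rank factorization $D=\dn_1\dn_2^*$ with nonzero $\dn_1\in\matriz{m}{1}$ and $\dn_2\in\matriz{n}{1}$. Taking $\en_1=\dn_1$ and $\en_2=\dn_2$ (so that $E=D$), I would apply Corollary~\ref{c_rank_one} to conclude immediately that $A^{\parallel D}=A^{\parallel(D,D)}$ exists if and only if the complex number $\dn_2^*A\dn_1$ is nonzero, in which case
$$
A^{\parallel D}=\frac{1}{\dn_2^*A\dn_1}\,\dn_1\dn_2^*=\frac{1}{\dn_2^*A\dn_1}\,D.
$$

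The only remaining step is to identify the scalar $\dn_2^*A\dn_1$ with $\tr(AD)$. This follows from the cyclic invariance of the trace: writing $AD=A\dn_1\dn_2^*$, I would compute $\tr(AD)=\tr(A\dn_1\dn_2^*)=\tr(\dn_2^*A\dn_1)$, and since $\dn_2^*A\dn_1\in\ce_1$ is a $1\times 1$ matrix, its trace equals itself, so $\tr(AD)=\dn_2^*A\dn_1$. Substituting this identity into the existence criterion and into the displayed formula yields both assertions of the corollary at once.

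There is no genuine obstacle here; the statement is essentially a coordinate-free repackaging of Corollary~\ref{c_rank_one}, and the single point requiring a little care is the scalar-versus-trace identification just described (the ambiguity $\tr(AD)$ versus $\tr(DA)$ is harmless, as the two coincide). As an alternative that avoids invoking the full rank form of Corollary~\ref{c_rank_one}, I could argue directly from Corollary~\ref{cor70000}: using $D=\dn_1\dn_2^*$ one obtains $DAD=(\dn_2^*A\dn_1)D=\tr(AD)\,D$, so the criterion $\rk(D)=\rk(DAD)$ holds precisely when $\tr(AD)\neq 0$, and in that case $A^{\parallel D}=D(DAD)^\dag D=\tr(AD)^{-1}\,DD^\dag D=\tfrac{1}{\tr(AD)}D$, recovering the same formula.
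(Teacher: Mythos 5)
Your proposal is correct and follows essentially the same route as the paper: take a full rank factorization $D=\dn_1\dn_2^*$, apply Corollary~\ref{c_rank_one} with $E=D$, and identify $\dn_2^*A\dn_1$ with $\tr(AD)$ via the cyclic property of the trace. The alternative argument via Corollary~\ref{cor70000} is a valid bonus but not needed.
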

\begin{proof} 
Let $D = \dn_1 \dn_2^*$ ($\dn_1\in \matriz{m}{1}$ and 
$\dn_2\in \matriz{n}{1}$) be a full rank factorization of $D$, 
According to Corollary \ref{c_rank_one} applied to the case $D=E$,
$\dn_2^* A \dn_1 = \tr(\dn_2^* A \dn_1) = \tr(A \dn_1 \dn_2^*) = \tr(AD)$.
\end{proof}

\indent Let $\matriz{n}{m}^{\parallel D}$ stand for the set of all matrices $A\in \ce_{n,m}$ such that $A$ is invertible along $D$, i.e.,
$$
\matriz{n}{m}^{\parallel D}=\{A\in \ce_{n,m}\colon A^{\parallel D}\hbox{ exists }\}.
$$
The following corollary proves that the set under consideration is nonempty.

\begin{cor} \label{cor4002}Let $D \in \ce_{m,n}$. Then,  $\matriz{n}{m}^{\parallel D}\neq\emptyset$. 
\end{cor}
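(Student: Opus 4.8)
The plan is to observe that this statement is an immediate consequence of the characterizations already established, so that no genuinely new argument is required; the only real task is to line up the correct earlier results and check that the matrix shapes match. I would record two routes and keep the more self-contained one.

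The first route is purely set-theoretic. By Theorem \ref{thm4100}, a matrix $A \in \ce_{n,m}$ is invertible along $D$ if and only if it is left invertible along $D$, so that $\matriz{n}{m}^{\parallel D} = (\matriz{n}{m})^{\parallel D}_{left}$. Since Corollary \ref{cor305} asserts $(\matriz{n}{m})^{\parallel D}_{left} = (\matriz{n}{m})^{\parallel D}_{right} \neq \emptyset$, the nonemptiness follows at once. This is the shortest argument and is the one I would cite in the one-line proof.

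The second route, which I would prefer to keep because it produces an explicit witness, is to exhibit a concrete element of $\matriz{n}{m}^{\parallel D}$. The candidate I would try first is the Moore--Penrose inverse $A = D^\dag \in \matriz{n}{m}$, which has exactly the right size since $D \in \matriz{m}{n}$. The defining identity $D D^\dag D = D$ gives $DAD = D$, hence $\rk(DAD) = \rk(D)$, and Corollary \ref{cor70000} then yields that $A$ is invertible along $D$. Thus $D^\dag \in \matriz{n}{m}^{\parallel D}$, and in fact $(D^\dag)^{\parallel D} = D(DD^\dag D)^\dag D = DD^\dag D = D$.

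I do not expect any real obstacle here: the difficulty has been entirely absorbed into the earlier results, so the argument is a routine invocation of Corollary \ref{cor70000} (or of Theorem \ref{thm4100} together with Corollary \ref{cor305}). The only point requiring care is bookkeeping of matrix sizes, namely confirming that $D^\dag$ indeed lands in $\matriz{n}{m}$ and that the cited corollaries apply verbatim in the specialization $E = D$.
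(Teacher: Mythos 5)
Your first route is exactly the paper's own proof: the paper cites Theorem \ref{thm4100} and Corollary \ref{cor305} (together with Corollary \ref{cor30200}, which it uses only to record an explicit description of the set), so on that route the proposal is correct and identical in approach.

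Your second route is a genuinely different and more constructive argument, and it is also correct: since $D\in\ce_{m,n}$ gives $D^\dag\in\ce_{n,m}$, the identity $DD^\dag D=D$ yields $\rk(DD^\dag D)=\rk(D)$, so Corollary \ref{cor70000} applies and shows $D^\dag\in\matriz{n}{m}^{\parallel D}$ with $(D^\dag)^{\parallel D}=D(DD^\dag D)^\dag D=DD^\dag D=D$. What this buys over the paper's argument is an explicit witness with no appeal to the nonsingular-matrix construction buried in the proof of Theorem \ref{thm303}; what it costs is nothing, since Corollary \ref{cor70000} is already available at this point in the paper. Note also that the degenerate case $D=0$ is handled automatically ($X=0$ is the inverse along $0$), so there is no hidden edge case. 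Either route is acceptable; the second would arguably make a cleaner self-contained proof.
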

\begin{proof} Apply Theorem \ref{thm4100}, Corollary \ref{cor305} and Corollary \ref{cor30200}.
\end{proof}

Observe that Corollary \ref{cor30200} gives an explicit representation of 
$\matriz{n}{m}^{\parallel D}$.


\section{Outer and inner inverses}

In the following theorem it will be proved that the notion introduced in Definition \ref{def3035} is an outer inverse.

\begin{thm}\label{outer} Let $A \in \ce_{n,m}$ and $D,E \in \ce_{m,n}$.
If $A$ is $(D, E)$-invertible, then $A^{\parallel (D, E)}$ is an outer inverse of $A$.
\end{thm}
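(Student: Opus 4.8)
The plan is to work directly from the defining conditions of the $(D,E)$-inverse recalled in Definition \ref{def3035}, together with the elementary fact used throughout Section 2 that a range inclusion of the form $\rr(X)\subseteq\rr(D)$ is equivalent to the existence of a matrix factoring $X$ through $D$ on the left. Writing $X=A^{\parallel (D, E)}$, the goal is simply to establish the outer-inverse identity $XAX=X$.

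First I would invoke the condition $\rr(X)\subseteq\rr(D)$ from Definition \ref{def3035}. By the characterization recalled immediately before Definition \ref{def_2.7}, this inclusion guarantees the existence of a matrix $M\in\ce_n$ with $X=DM$. The key step is then a one-line computation: substituting $X=DM$ into the product $XAX$ and using the other defining equation $XAD=D$ yields
$$
XAX = XA(DM) = (XAD)M = DM = X,
$$
which is exactly the required identity.

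I expect no serious obstacle here, since the conclusion is essentially a formal consequence of the two ``range/left'' conditions $XAD=D$ and $\rr(X)\subseteq\rr(D)$ alone; in particular it does not even call upon the ``null-space/right'' conditions $EAX=E$ and $\kk(E)\subseteq\kk(X)$ (by symmetry, a dual argument writing $X=NE$ from $\kk(E)\subseteq\kk(X)$ and using $EAX=E$ would work equally well). As a cross-check, one could instead start from the explicit representation $A^{\parallel (D, E)}=D(EAD)^\dag E$ furnished by Theorem \ref{thm60000} and apply the Moore-Penrose identity $(EAD)^\dag (EAD)(EAD)^\dag=(EAD)^\dag$ to reach the same conclusion; this verifies the computation but is less economical than the direct argument above.
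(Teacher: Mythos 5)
Your argument is correct and coincides with the paper's own proof: both factor $A^{\parallel (D,E)}=DM$ from the range inclusion $\rr(A^{\parallel (D,E)})\subseteq\rr(D)$ and then apply $A^{\parallel (D,E)}AD=D$ to get $XAX=X$ in one line. Nothing to add.
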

\begin{proof} According to Definition \ref{def3035}, since $\rr(A^{\parallel (D, E)})\subseteq\rr(D)$,  there exists 
$M \in \ce_n$ such that $A^{\parallel (D, E)}=DM$. Thus,
$A^{\parallel (D, E)} A A^{\parallel (D, E)}=A^{\parallel (D, E)} A DM = DM =A^{\parallel (D, E)}$.
\end{proof}

\begin{cor}\label{corouter} Let $A \in \ce_{n,m}$ and $D\in \ce_{m,n}$. If $A$ is invertible along $D$, then $A^{\parallel D}$ is an outer inverse of $A$.
\end{cor}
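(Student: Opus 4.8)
The plan is to reduce this corollary to the already-established Theorem~\ref{outer}. Recall from Section 5 (the discussion following Definition~\ref{def3037}, or directly from \cite[Proposition 6.1]{Drazin} extended to matrices) that the inverse of $A$ along $D$ is precisely the $(D,D)$-inverse of $A$; that is, $A^{\parallel D} = A^{\parallel (D,D)}$ whenever either side exists. Since Theorem~\ref{outer} establishes that $A^{\parallel (D,E)}$ is an outer inverse of $A$ for arbitrary $D,E \in \ce_{m,n}$, specializing to the case $E = D$ immediately yields the claim.

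Concretely, first I would invoke the hypothesis that $A$ is invertible along $D$, so that $A^{\parallel D}$ exists and equals $A^{\parallel (D,D)}$. Then I would apply Theorem~\ref{outer} with $E = D$ to conclude that $A^{\parallel (D,D)}$ satisfies $A^{\parallel (D,D)} A A^{\parallel (D,D)} = A^{\parallel (D,D)}$, which is exactly the outer-inverse identity. Rewriting this in terms of $A^{\parallel D}$ finishes the argument.

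There is essentially no obstacle here: the corollary is a direct specialization of the preceding theorem. The only point requiring a word of justification is the identification $A^{\parallel D} = A^{\parallel (D,D)}$, which has already been recorded in the text preceding this corollary, so it may be cited without further ado. Thus the proof amounts to a single line: apply Theorem~\ref{outer} to the case $E=D$.

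\begin{proof}
Recall that the inverse of $A$ along $D$ coincides with the $(D, D)$-inverse of $A$, i.e., $A^{\parallel D}=A^{\parallel (D, D)}$. Therefore, the result follows from Theorem \ref{outer} applied to the case $E=D$.
\end{proof}
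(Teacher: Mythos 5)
Your proof is correct and follows exactly the paper's argument: the paper also proves this corollary by applying Theorem~\ref{outer} to the case $D=E$, relying on the identification of $A^{\parallel D}$ with the $(D,D)$-inverse recorded after Definition~\ref{def3037}.
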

\begin{proof} Apply Theorem \ref{outer} for the case $D=E$.
\end{proof}
 Given $A \in \matriz{n}{m}$ and  $D, E \in \matriz{m}{n}$ such that $\ADE$ exists, according to Corollary \ref{cor30200} and Theorem \ref{thm60000},
$$
\rk(\ADE) = \rk(D)=\rk(E) \leq \rk(A).
$$
In particular, when $A$, $D$ and $E$ are square matrices, $\ADE$ is nonsingular if and only if $D$ or $E$ are nonsingular.
In addition, if $\ADE$ is nonsingular, then $A$ is nonsingular. However,
if $A$ is nonsingular and $D,E$ are such that $\ADE$ exists, then it may be happen
that $D$, $E$, or $\ADE$ are singular. For example, take 
$$
A=I_2, \hskip.3truecm D=E=\mat{1}{0}{0}{0}.
$$
According to Theorem \ref{thm60000}, $\ADE$ exists
($\ADE=D$). However, it is possible to characterize when $\rk(A)=\rk(D)$. This characterization is linked with the following observation:
$\ADE$ is always an outer inverse of $A$  (Theorem \ref{outer}), but it is not necessarily an inner inverse of $A$. To prove this
characterization some preparation is needed first.

\begin{thm}\label{thm90000}Let $A\in \matriz{n}{m}$ and $D,E \in \matriz{m}{n}$ be such that $\ADE$ exists. Then, the following statements hold.
\begin{itemize}
\item[{\rm (i)}] $\rr(D) \oplus \kk(A) = \kk(A \ADE A-A)$.
\item[{\rm (ii)}] $\rk(A) = \rk(D) + \rk(A \ADE A-A)$.
\item[{\rm (iii)}] $\rr(A) + \kk(E) = \ce^n$ and $\rr(A) \cap \kk(E) = \rr(A \ADE A-A)$.
\end{itemize}
\end{thm}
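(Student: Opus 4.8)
The plan is to set $X = \ADE$ and work entirely from the four defining identities of Definition \ref{def3035} together with the outer-inverse relation $XAX = X$ furnished by Theorem \ref{outer}. Writing $A\ADE A - A = (AX - I_n)A$, I would record at the outset the two reformulations I will use repeatedly:
$$
\kk(A\ADE A - A) = \{\xn \in \ce^m : AXA\xn = A\xn\}, \qquad \rr(A\ADE A - A) = (AX - I_n)(\rr(A)).
$$
Since $AX \in \ce_n$ satisfies $(AX)^2 = AX$ (a consequence of $XAX = X$), it is an idempotent on $\ce^n$; this is the structural picture behind the two splittings below, although the verifications only need the defining identities.

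For statement (i) I would first check $\rr(D) \subseteq \kk(A\ADE A - A)$ and $\kk(A) \subseteq \kk(A\ADE A - A)$. The second is immediate. For the first, $\xn = D\un$ gives $XA\xn = XAD\un = D\un = \xn$ by $XAD = D$, hence $AXA\xn = A\xn$. The sum is direct, since $\xn \in \rr(D) \cap \kk(A)$ forces $\xn = XAD\un = XA\xn = X\on = \on$. For the reverse inclusion, given $\xn$ with $AXA\xn = A\xn$, I would set $\dn = XA\xn \in \rr(X) \subseteq \rr(D)$; then $A\dn = AXA\xn = A\xn$, so $\xn - \dn \in \kk(A)$ and $\xn = \dn + (\xn - \dn) \in \rr(D) + \kk(A)$. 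This yields $\rr(D) \oplus \kk(A) = \kk(A\ADE A - A)$. Statement (ii) then follows purely by counting: applying rank--nullity to $A\ADE A - A \in \ce_{n,m}$ and using (i) gives $\dim\kk(A\ADE A - A) = \rk(D) + \dim\kk(A) = \rk(D) + (m - \rk(A))$, whence $\rk(A\ADE A - A) = m - \dim\kk(A\ADE A - A) = \rk(A) - \rk(D)$.

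For statement (iii), the sum $\rr(A) + \kk(E) = \ce^n$ I would obtain for free: by Theorem \ref{thm3036} the matrix $A$ is left and right $(D,E)$-invertible, so $\rr(AD) \oplus \kk(E) = \ce^n$ by Theorem \ref{thm3033}, and $\rr(AD) \subseteq \rr(A)$. For the intersection, one inclusion uses $EAX = E$: any element of $\rr(A\ADE A - A)$ has the form $(AX - I_n)A\un$, which lies in $\rr(A)$ and satisfies $E(AX - I_n)A\un = (EAX - E)A\un = \on$, so it lies in $\rr(A) \cap \kk(E)$. Conversely, if $\wn \in \rr(A) \cap \kk(E)$, then $\wn \in \kk(E) \subseteq \kk(X)$ gives $X\wn = \on$ and hence $AX\wn = \on$, so $(AX - I_n)\wn = -\wn$; writing $\wn = A\un$ yields $\wn = (AX - I_n)A(-\un) \in \rr(A\ADE A - A)$. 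This gives $\rr(A) \cap \kk(E) = \rr(A\ADE A - A)$.

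Every computation here is short, so I do not expect a genuine obstacle; the one thing to get right is the bookkeeping of ambient spaces, since $\kk(A\ADE A - A)$ and $\kk(A)$ live in $\ce^m$ while $\rr(A\ADE A - A)$, $\rr(A)$ and $\kk(E)$ live in $\ce^n$. Correspondingly, one must correctly pair the identity $XAD = D$, which drives the $\ce^m$-side statements (i) and (ii), with the identity $EAX = E$, which drives the $\ce^n$-side statement (iii).
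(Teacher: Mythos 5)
Your argument is correct. For statements (i) and (ii) it coincides with the paper's proof: the same two inclusions, the same directness check via $\xn=XAD\un=XA\xn=\on$, and the same decomposition $\xn=(\xn-XA\xn)+XA\xn$ for the reverse inclusion, followed by rank--nullity in $\ce^m$. Where you genuinely diverge is statement (iii), in two places. First, for $\rr(A)+\kk(E)=\ce^n$ the paper dualizes: it applies statement (i) to $A^*$, $E^*$, $D^*$ to get $\kk(A^*)\cap\rr(E^*)=0$ and then takes orthogonal complements; you instead read the sum off Theorem \ref{thm3033} via $\rr(AD)\oplus\kk(E)=\ce^n$ and $\rr(AD)\subseteq\rr(A)$, which is shorter and avoids invoking the $*$-symmetry of the $(D,E)$-inverse. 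Second, for $\rr(A)\cap\kk(E)=\rr(A\ADE A-A)$ the paper proves only the inclusion $\subseteq$ of the right side into the left and then forces equality by a dimension count that leans on statement (ii) and Theorem \ref{thm60000}; you prove both inclusions directly, the nontrivial one by observing that $\wn\in\kk(E)\subseteq\kk(\ADE)$ gives $(A\ADE-I_n)\wn=-\wn$, so writing $\wn=A\un$ exhibits $\wn=(A\ADE A-A)(-\un)$. Your version is more elementary and self-contained (no rank bookkeeping needed for the intersection), at the cost of not displaying the clean duality between parts (i) and (iii) that the paper's argument makes visible.
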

\begin{proof}
The inclusion $\kk(A) \subseteq \kk(A \ADE A-A)$ is evident.
The equality $\ADE AD=D$ leads to $\rr(D) \subseteq \kk(A \ADE A-A)$.
If $\xn \in \rr(D) \cap \kk(A)$, then there exists
$\un \in \ce^n$ such that $\xn = D \un$, and therefore,
$\xn = D \un = \ADE AD \un = \ADE A \xn = \on$. Thus,
$\rr(D)\oplus \kk(A) \subseteq \kk(A \ADE A-A)$.

To prove the opposite inclusion, take $\yn \in \kk(A \ADE A-A)$. Now,
$A (\ADE A \yn - \yn) = \on$ and the
decomposition $\yn = (\yn - \ADE A(\yn)) + \ADE A(\yn)$
proves that $\kk(A \ADE A-A)\subseteq \rr(D) \oplus \kk(A)$
(because $\rr(\ADE)=\rr(D)$).

Statement (ii) follows from statement (i).

According to Remark \ref{rema40000} (i), $(A^*)^{\parallel (E^*, D^*)}$ exists, so that, according to statement (i) applied to $A^*$, $D^*$ and $E^*$,
$\kk(A^*) \cap \rr(E^*) = 0$. Then,
$$
[\rr(A) + \kk(E)]^\perp = \rr(A)^\perp \cap \kk(E)^\perp
= \kk(A^*) \cap \rr(E^*) = 0.
$$
Therefore, $\rr(A)+ \kk(E) = \ce^n$. The inclusion
$\rr(A \ADE A-A) \subseteq \rr(A) \cap \kk(E)$ follows from
$EA\ADE=E$. According to statement (ii), Theorem \ref{thm60000} and
\begin{equation*}
\begin{split} \dim (\rr(A) \cap \kk(E)) & = \dim \rr(A) + \dim \kk(E)- \dim(\rr(A)+\kk(E))\\
& = \rk(A)+n-\rk(D)-n = \rk(A \ADE A-A),
\end{split}
\end{equation*}
$\rr(A \ADE A-A)$ and $\rr(A) \cap \kk(E)$ have the same dimension.
Therefore, both subspaces are equal.
\end{proof}

The following corollary characterizes when $\ADE$
is an inner inverse of $A$.

\begin{cor}\label{cor100000}
Let $A\in \matriz{n}{m}$ and $D,E \in \matriz{m}{n}$ be such that $\ADE$ exists. Then,
the following statements are equivalent.
\begin{itemize}
\item[{\rm (i)}] $A\ADE A=A$.
\item[{\rm (ii)}] $\rk(A)=\rk(D)$.
\item[{\rm (iii)}] $\rr{(D)} \oplus \kk{(A)} = \ce^m$.
\item[{\rm (iv)}] $\rr{(A)} \oplus \kk{(E)} = \ce^n$.
\end{itemize}
\end{cor}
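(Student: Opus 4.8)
The plan is to obtain all three equivalences as immediate consequences of Theorem~\ref{thm90000}, observing that each of the conditions (ii), (iii) and (iv) is merely a reformulation of the single statement $A\ADE A - A = \on$, which is exactly (i). Thus the whole corollary reduces to three short applications of the three parts of that theorem, and the real content has already been established.

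First I would prove (i) $\Leftrightarrow$ (ii) using the rank identity of Theorem~\ref{thm90000}(ii), namely $\rk(A) = \rk(D) + \rk(A\ADE A - A)$. From this, $\rk(A) = \rk(D)$ holds if and only if $\rk(A\ADE A - A) = 0$, that is, if and only if $A\ADE A - A$ is the zero matrix; equivalently, $A\ADE A = A$. Next, for (i) $\Leftrightarrow$ (iii) I would invoke Theorem~\ref{thm90000}(i), which gives $\rr(D) \oplus \kk(A) = \kk(A\ADE A - A)$. Since $A\ADE A - A \in \ce_{n,m}$ acts on $\ce^m$, its null space coincides with all of $\ce^m$ precisely when the matrix itself vanishes; hence $\rr(D) \oplus \kk(A) = \ce^m$ holds exactly when $A\ADE A = A$.

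Finally, for (i) $\Leftrightarrow$ (iv) I would use Theorem~\ref{thm90000}(iii), which already supplies $\rr(A) + \kk(E) = \ce^n$ with no extra hypothesis, together with $\rr(A) \cap \kk(E) = \rr(A\ADE A - A)$. Because the sum always fills $\ce^n$, the condition $\rr(A) \oplus \kk(E) = \ce^n$ is equivalent to the directness of that sum, i.e. to $\rr(A) \cap \kk(E) = 0$, which by the second identity is the same as $\rr(A\ADE A - A) = 0$, and hence again equivalent to $A\ADE A = A$.

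There is essentially no obstacle here, since Theorem~\ref{thm90000} does all the heavy lifting; the only two points requiring a moment's care are the elementary facts that a matrix whose null space is the entire domain must be the zero matrix (used in (i) $\Leftrightarrow$ (iii)) and that the already-guaranteed equality $\rr(A) + \kk(E) = \ce^n$ reduces the directness condition in (iv) to the vanishing of the intersection $\rr(A) \cap \kk(E)$. Chaining the three equivalences through the common pivot $A\ADE A = A$ then yields the stated result at once.
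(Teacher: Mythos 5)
Your proof is correct and is exactly the route the paper takes: its entire proof of this corollary is the single line ``Apply Theorem \ref{thm90000},'' and your three short deductions are precisely the intended unpacking of that theorem's three parts. Nothing to add.
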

\begin{proof} Apply Theorem \ref{thm90000}.
\end{proof}

In the next corollary the case when the inverse along an element is an inner inverse will be presented.

\begin{cor}\label{cor200000}
Let $A\in \matriz{n}{m}$ and $D \in \matriz{m}{n}$ be such that $A^{\parallel D}$ exists. Then,
the following statements are equivalent.
\begin{itemize}
\item[{\rm (i)}] $AA^{\parallel D} A=A$.
\item[{\rm (ii)}] $\rk(A)=\rk(D)$.
\item[{\rm (iii)}] $\rr{(D)} \oplus \kk{(A)} = \ce^m$.
\item[{\rm (iv)}] $\rr{(A)} \oplus \kk{(D)} = \ce^n$.
\end{itemize}
\end{cor}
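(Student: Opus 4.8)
The plan is to obtain this corollary as a direct specialization of Corollary \ref{cor100000} to the case $E = D$. The first step is to recall the identification made in the discussion following Definition \ref{def3037}: the inverse of $A$ along the matrix $D$ is precisely the $(D,D)$-inverse of $A$, so that $A^{\parallel D} = A^{\parallel (D,D)}$. Since the hypothesis guarantees that $A^{\parallel D}$ exists, the $(D,E)$-inverse $\ADE$ with $E = D$ exists as well, and hence Corollary \ref{cor100000} is applicable with this choice of $E$.

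The second step is simply to read off the four equivalent conditions of Corollary \ref{cor100000} under the substitution $E = D$. Statement (i), namely $A\ADE A = A$, becomes $AA^{\parallel D}A = A$; statement (ii), $\rk(A) = \rk(D)$, is unchanged; statement (iii), $\rr(D) \oplus \kk(A) = \ce^m$, is unchanged; and statement (iv), $\rr(A) \oplus \kk(E) = \ce^n$, becomes $\rr(A) \oplus \kk(D) = \ce^n$. These are exactly the four conditions appearing in the present statement, so their mutual equivalence is inherited immediately.

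I do not expect any genuine obstacle, since this is an honest specialization rather than a new argument. The only point requiring (minimal) care is verifying that the object $A^{\parallel D}$ in the hypothesis coincides with $\ADE$ evaluated at $E = D$; this is precisely the content of the remark after Definition \ref{def3037} together with Theorem \ref{thm4100}. Consequently the entire proof reduces to a single invocation: apply Corollary \ref{cor100000} for the case $D = E$.
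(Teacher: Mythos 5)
Your proposal is correct and coincides with the paper's own proof, which is exactly the one-line invocation of Corollary \ref{cor100000} for the case $D=E$, using the identification $A^{\parallel D}=A^{\parallel (D,D)}$ noted after Definition \ref{def3037}.
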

\begin{proof} Apply Corollary \ref{cor100000} to the case $D=E$.
\end{proof}

Let $A \in \matriz{n}{m}$ and $D,E \in \matriz{m}{n}$. Since $\ADE$ is an outer inverse, if it exists, 
$\ADE A$ and $A \ADE$ are idempotents. Now some properties of these idempotents will be studied.

\begin{thm}\label{propidem}
Let $A \in \matriz{n}{m}$ and $D,E \in \matriz{m}{n}$ be such that $\ADE$ exists.
\begin{enumerate}
\item[{\rm (i)}] $\ADE A$ and $A \ADE$ are idempotents, $\rr(\ADE A) = \rr(D)$, $\rr(A \ADE) = \rr(AD)$ and 
$\rk(D) = \rk(A \ADE) = \rk(\ADE A)=\rk(E)$.
\item[{\rm (ii)}] $\kk(A \ADE) = \kk(E)$ and $\kk(\ADE A) = \kk(EA)$.
\item[{\rm (iii)}] $\kk(\ADE A) = \kk(A)$
if and only if $\ADE$ is an inner inverse of $A$.
\item[{\rm (iv)}] 
$\rr(A \ADE) = \rr(A)$ if and only if $\ADE$ is an inner inverse of $A$.
\item[{\rm (v)}] $A \ADE$ is an orthogonal projector if and only if
$\rr(AD) = \rr(E^*)$.
\item[{\rm (vi)}] $\ADE A$ is an orthogonal projector if and only if
$\rr((EA)^*) = \rr(D)$.
\end{enumerate}
\end{thm}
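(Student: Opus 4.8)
The plan is to establish each of the six statements by exploiting the fact that $\ADE = D(EAD)^\dag E$ (Theorem~\ref{thm60000}) together with the rank identities $\rk(D) = \rk(E) = \rk(EAD)$ and the range/kernel data already collected in Theorem~\ref{thm3033} and Corollary~\ref{cor_rank}. Write $P = \ADE A$ and $Q = A\ADE$; since $\ADE$ is an outer inverse of $A$ (Theorem~\ref{outer}), both $P$ and $Q$ are idempotent, which is the whole of statement~(i) except for the explicit ranges and ranks. For the ranges, I would argue that $\rr(P) = \rr(\ADE)$ and $\rr(Q) = \rr(A\ADE) \subseteq \rr(A)$; using $\rr(\ADE)=\rr(D)$ (from Definition~\ref{def3035}) gives $\rr(P)=\rr(D)$, and from $\ADE AD = D$ one gets $\rr(AD)\subseteq \rr(Q)$, while the reverse inclusion is immediate, yielding $\rr(Q)=\rr(AD)$. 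The rank equalities then follow since the rank of an idempotent equals the dimension of its range, and $\rk(AD)=\rk(D)=\rk(E)$ by Corollary~\ref{cor_rank}.

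For statement~(ii) I would compute the kernels directly. Because $Q = A\ADE = AD(EAD)^\dag E$, clearly $\kk(E) \subseteq \kk(Q)$; for the reverse inclusion, $\rk(Q)=\rk(E)$ forces $\dim\kk(Q)=\dim\kk(E)$, so the inclusion is an equality. The argument for $\kk(P) = \kk(EA)$ is symmetric: $P = \ADE A = D(EAD)^\dag EA$ gives $\kk(EA)\subseteq\kk(P)$, and the rank count $\rk(P)=\rk(E)=\rk(EA)$ (the last equality from Theorem~\ref{thm3033}(iv) or Corollary~\ref{cor_rank}) closes the gap. Statements~(iii) and~(iv) are then quick consequences of the inner-inverse characterization in Corollary~\ref{cor100000}: since $P$ is idempotent with $\rr(P)\oplus\kk(P)=\ce^m$, the condition $\kk(P)=\kk(A)$ is equivalent to $\rr(D)\oplus\kk(A)=\ce^m$, which by Corollary~\ref{cor100000}(iii) is equivalent to $\ADE$ being an inner inverse; dually, $\rr(Q)=\rr(A)$ matches Corollary~\ref{cor100000}(iv) via $\rr(Q)=\rr(AD)$ and the fact that an idempotent is determined by its range and kernel.

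The genuinely delicate parts are the orthogonality criteria~(v) and~(vi), and I expect~(v) to be the main obstacle. The key fact is that an idempotent $Q$ is an orthogonal projector precisely when $Q=Q^*$, equivalently when $\kk(Q)=\rr(Q)^\perp$. I would use the range and kernel already identified: $\rr(Q)=\rr(AD)$ and $\kk(Q)=\kk(E)$. Recalling the basic identity $\kk(E)=\rr(E^*)^\perp$ from the notation section, the condition $\kk(Q)=\rr(Q)^\perp$ becomes $\rr(E^*)^\perp = \rr(AD)^\perp$, which is equivalent to $\rr(AD)=\rr(E^*)$. Statement~(vi) follows by the same reasoning applied to $P$: here $\rr(P)=\rr(D)$ and $\kk(P)=\kk(EA)=\rr((EA)^*)^\perp$, so $P$ is an orthogonal projector iff $\rr((EA)^*)^\perp = \rr(D)^\perp$, i.e. $\rr((EA)^*)=\rr(D)$. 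The care needed in~(v)--(vi) is to justify that an idempotent with a given range and a given kernel that are orthogonal complements of each other must be Hermitian; I would supply this by noting that such an idempotent equals $P_{\rr(Q)}$, the orthogonal projector onto its range, since both agree on $\rr(Q)$ and on $\kk(Q)=\rr(Q)^\perp$, which is exactly the orthogonal projector condition.
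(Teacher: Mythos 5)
Your proposal is correct and follows essentially the same route as the paper: idempotency from Theorem \ref{outer}, identification of ranges and kernels via the defining equations plus the rank identities of Corollary \ref{cor_rank}, reduction of (iii)--(iv) to Corollary \ref{cor100000}, and the criterion $\kk=\rr^{\perp}$ for (v)--(vi). The only cosmetic differences are that in (ii) and (vi) you argue directly where the paper closes the corresponding inclusions from the opposite side or dualizes via adjoints; both variants are sound.
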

\begin{proof} Since $\ADE$ is an outer inverse (Theorem \ref{outer}), $\ADE A$ and $A\ADE$are idempotents and $\rr(\ADE A)=\rr(\ADE)$.
Moreover, since $\rr(\ADE)=\rr(D)$, according to Theorem \ref{thm60000},
$$
\rk(E)=\rk(D)=\rk(\ADE)=\rk(\ADE A).
$$
In addition, according to Remark \ref{rema40000} (i) and what has been proved, 
$\rk(E)=\rk(E^*)=\rk((\ADE)^* A^*)=\rk(A\ADE)$. Moreover, 
$$
\rr(AD)=\rr(A\ADE AD)\subseteq \rr(A\ADE).
$$
However, since according to Corollary \ref{cor_rank}, $\rk(AD)=\rk(D)=\rk(A\ADE)$, $\rr(A\ADE)=\rr(AD)$.

Since $\ADE$ is an outer inverse, $\kk(A \ADE)=\kk(\ADE)=\kk(E)$. Note that since $EA=EA\ADE A$,
$\kk(\ADE A)\subseteq \kk(EA)$. However, since according to Corollary \ref{cor_rank}, $\rk(EA)=\rk(D)=\rk(\ADE A)$,
$\dim \kk(\ADE A)=\dim \kk(EA)$. Therefore, $\kk(\ADE A)=\kk(EA)$.

 Naturally, $\kk(A) \subseteq \kk(\ADE A)$. Since $\dim \kk(\ADE A) = m- \rk(\ADE A) = m-\rk(D)$
and $\dim \kk(A) = m - \rk(A)$, $\kk(A) = \kk(\ADE A)$ if and only if $\rk(A) = \rk(D)$. Now apply Corollary \ref{cor100000}.

 Note that $\ADE$ is an inner inverse of $A$
if and only if $(\ADE)^*$ is an inner inverse of $A^*$. Now, according to  statement (iii), this is equivalent to 
$\kk((\ADE)^* A^*)=\kk(A^*)$, which in turn is equivalent to $\rr(A \ADE) = \rr(A)$.

Observe that $A\ADE$ is an orthogonal projector
if and only if 
$$
\rr(AD)=\rr(A\ADE) = \kk(A\ADE)^\perp = \kk(E)^\perp = \rr(E^*).
$$

The proof of statement (vi) follows Remark \ref{rema40000} (i) and statement (v).
\end{proof}

In the next corollary the case of the inverse along a fixed matrix will be studied.

\begin{cor}\label{coridem_bis}
Let $A \in \matriz{n}{m}$ and $D\in \matriz{m}{n}$ be such that $A^{\parallel D}$ exists.
\begin{enumerate}
\item[{\rm (i)}] $A^{\parallel D} A$ and $A A^{\parallel D}$ are idempotents, $\rr(A^{\parallel D} A) = \rr(D)$, $\rr(A A^{\parallel D}) = \rr(AD)$ and 
$\rk(D) = \rk(A A^{\parallel D}) = \rk(A^{\parallel D} A)$.
\item[{\rm (ii)}] $\kk(A A^{\parallel D}) = \kk(D)$ and $\kk(A^{\parallel D} A) = \kk(DA)$.
\item[{\rm (iii)}] $\kk(A^{\parallel D} A) = \kk(A)$
if and only if $A^{\parallel D}$ is an inner inverse of $A$.
\item[{\rm (iv)}] 
$\rr(A A^{\parallel D}) = \rr(A)$ if and only if $A^{\parallel D}$ is an inner inverse of $A$.
\item[{\rm (v)}] $A A^{\parallel D}$ is an orthogonal projector if and only if
$\rr(AD) = \rr(D^*)$.
\item[{\rm (vi)}] $A^{\parallel D} A$ is an orthogonal projector if and only if
$\rr((DA)^*) = \rr(D)$.
\end{enumerate}
\end{cor}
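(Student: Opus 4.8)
The plan is to obtain the entire statement as the specialization $E=D$ of Theorem \ref{propidem}, so essentially no new computation is needed. The only point that must be checked first is that the hypothesis ``$A^{\parallel D}$ exists'' is exactly the $E=D$ instance of the hypothesis ``$\ADE$ exists'' of Theorem \ref{propidem}, together with the identification of the two inverses. This is legitimate: by Definition \ref{def3037} and the remark following it, the inverse of $A$ along $D$ coincides with the $(D,D)$-inverse of $A$, so that $A^{\parallel D}=A^{\parallel(D,D)}$ whenever either side exists (cf. Theorem \ref{thm4100}). Hence I would open the proof by recording $A^{\parallel D}=\ADE$ in the case $E=D$, after which Theorem \ref{propidem} applies verbatim with every occurrence of $E$ replaced by $D$.

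With that identification in hand, each of the six items reads off directly. For statement (i), the assertion that $A^{\parallel D}A$ and $AA^{\parallel D}$ are idempotents and the range equalities $\rr(A^{\parallel D}A)=\rr(D)$, $\rr(AA^{\parallel D})=\rr(AD)$ are unchanged under $E=D$; the rank chain $\rk(D)=\rk(A\ADE)=\rk(\ADE A)=\rk(E)$ of Theorem \ref{propidem} (i) collapses, since its last term $\rk(E)$ now equals $\rk(D)$, leaving $\rk(D)=\rk(AA^{\parallel D})=\rk(A^{\parallel D}A)$. For statement (ii), the substitution turns $\kk(A\ADE)=\kk(E)$ into $\kk(AA^{\parallel D})=\kk(D)$ and $\kk(\ADE A)=\kk(EA)$ into $\kk(A^{\parallel D}A)=\kk(DA)$. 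Statements (iii) and (iv) contain no free occurrence of $E$ beyond the inverse itself, so they are literally Theorem \ref{propidem} (iii)--(iv) with $\ADE$ renamed $A^{\parallel D}$.

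The remaining two items are handled the same way. For statement (v), Theorem \ref{propidem} (v) characterizes $A\ADE$ being an orthogonal projector by $\rr(AD)=\rr(E^*)$, and setting $E=D$ gives the stated condition $\rr(AD)=\rr(D^*)$; likewise statement (vi) follows from Theorem \ref{propidem} (vi) by reading $\rr((EA)^*)=\rr(D)$ as $\rr((DA)^*)=\rr(D)$. Thus the complete argument is simply: \emph{Apply Theorem \ref{propidem} to the case $D=E$.} I do not anticipate any genuine obstacle; the only thing to be careful about is confirming the existence equivalence and the identity $A^{\parallel D}=A^{\parallel(D,D)}$, so that invoking Theorem \ref{propidem} is justified, and noting that the $\rk(E)$ term in part (i) becomes redundant rather than a new requirement.
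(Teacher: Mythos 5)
Your proposal is correct and is exactly the paper's argument: the printed proof reads ``Apply Theorem \ref{propidem} to the case $D=E$.'' The extra care you take in justifying the identification $A^{\parallel D}=A^{\parallel(D,D)}$ (via Definition \ref{def3037} and the remark following it) is sound and only makes the specialization more explicit.
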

\begin{proof} Apply Theorem \ref{propidem} to the case $D=E$.
\end{proof}

Given $A\in\ce_n$ such that $A$ is group invetible, according to \cite{bt} and \cite{rdd}, $A \core{A}$ and  $\cored{A} A$
are orthogonal projectors. In the next corollaries similar properties for several generalized inverses will be characterized using Corollary \ref{coridem_bis}.
Note that the following identities hold: $\rr(XY) = \rr(X)$ and
$\rr(XX^*) = \rr(X)$, where $Y$ is a nonsingular matrix and $X$ is any matrix. 
In addition, recall that a matrix $A\in\ce_n$ is said to be \it $EP$, \rm if  $AA^\dag = A^\dag A$.
It is well known that this condition is equivalent to $\rr(A)=\rr(A^*)$.

\begin{cor}\label{cor810000} Consider $A\in\ce_n$ a  group invertible matrix. The following statements holds.\par
\begin{itemize}
\item[{\rm (i)}] $\core{A} A$ is an orthogonal projector.
\item[{\rm (ii)}] $A \cored{A}$ is an orthogonal projector.
\item[{\rm (iii)}] $A$ is EP.
\end{itemize}
\end{cor}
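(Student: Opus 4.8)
The plan is to read Corollary \ref{cor810000} as the assertion that the three statements are \emph{equivalent}: a group invertible matrix need not be EP (for instance an idempotent of rank one whose column and row spaces differ), so the three cannot all hold unconditionally. The strategy is to recognize the core and the dual core inverses as particular $(D,E)$-inverses and then to invoke the orthogonal-projector criteria of Theorem \ref{propidem}, items (v) and (vi).

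First I would recall, as in the proof of Corollary \ref{cor61050}, that for a group invertible $A$ both $\core{A}$ and $\cored{A}$ exist (core invertibility is equivalent to group invertibility) and that they are realized as $(D,E)$-inverses: $\core{A} = A^{\parallel (A, A^{*})}$ and $\cored{A} = A^{\parallel (A^{*}, A)}$. The second identification follows from $\cored{A} = (\core{(A^{*})})^{*}$ together with Remark \ref{rema40000} (i). Since these $(D,E)$-inverses exist, Theorem \ref{propidem} applies with the appropriate choices of $D$ and $E$.

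For statement (i) I would write $\core{A}A = A^{\parallel(A,A^{*})}A$ and apply Theorem \ref{propidem} (vi) with $D=A$ and $E=A^{*}$: this product is an orthogonal projector if and only if $\rr((A^{*}A)^{*}) = \rr(A)$. Using $(A^{*}A)^{*} = A^{*}A$ and the standard identity $\rr(A^{*}A) = \rr(A^{*})$, the condition collapses to $\rr(A^{*}) = \rr(A)$, which is exactly the EP property. Symmetrically, for statement (ii) I would write $A\cored{A} = A\,A^{\parallel(A^{*},A)}$ and apply Theorem \ref{propidem} (v) with $D=A^{*}$ and $E=A$: the product is an orthogonal projector if and only if $\rr(AA^{*}) = \rr(A^{*})$, and since $\rr(AA^{*}) = \rr(A)$ this again reduces to $\rr(A) = \rr(A^{*})$, i.e.\ $A$ is EP. Hence (i) $\Leftrightarrow$ (iii) $\Leftrightarrow$ (ii), which is the corollary.

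I expect the only genuinely delicate step to be the correct identification of $\cored{A}$ as $A^{\parallel(A^{*},A)}$ and the verification that Theorem \ref{propidem} is legitimately applicable, namely that the relevant $(D,E)$-inverses exist; once this translation dictionary between the two generalized inverses and their $(D,E)$-forms is fixed, the remaining work is routine, resting only on the range identities $\rr(A^{*}A)=\rr(A^{*})$ and $\rr(AA^{*})=\rr(A)$ and on the characterization of EP matrices by $\rr(A)=\rr(A^{*})$ recalled just before the statement.
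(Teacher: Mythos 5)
Your reading of the statement as an equivalence is the right one (the paper's phrasing ``the following statements holds'' is loose, but its proof establishes precisely (i) $\Leftrightarrow$ (iii) and (ii) $\Leftrightarrow$ (iii)), and your argument is correct. It follows essentially the same route as the paper: both reduce each of (i) and (ii) to the orthogonal-projector criteria of Theorem \ref{propidem} (v)--(vi) and then to the range identity $\rr(A)=\rr(A^*)$. The only difference is the translation dictionary. You represent $\core{A}=A^{\parallel (A,A^*)}$ and $\cored{A}=A^{\parallel (A^*,A)}$ (via \cite[Theorem 4.4]{rdd}, exactly as the paper itself does in the proof of Corollary \ref{cor61050} (iii)--(iv)), and then apply Theorem \ref{propidem} directly; the paper instead uses Theorem \ref{thmA} (i)--(ii) to write $\core{A}=A^{\parallel AA^*}$ and $\cored{A}=A^{\parallel A^*A}$ and invokes the $D=E$ specialization, Corollary \ref{coridem_bis}. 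Your choice makes the final range computation marginally shorter ($\rr((A^*A)^*)=\rr(A^*A)=\rr(A^*)$ versus the paper's $\rr((AA^*A)^*)=\rr(A^*AA^*)=\rr(A^*)$), and your derivation of $\cored{A}=A^{\parallel(A^*,A)}$ from $\cored{A}=(\core{(A^*)})^*$ and Remark \ref{rema40000} (i) is sound; both versions rest on the same existence check, which you correctly note is supplied by the equivalence of core invertibility and group invertibility.
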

\begin{proof} According to Theorem \ref{thmA} (i) and Theorem \ref{coridem_bis} (vi), 
$\core{A} A$ is an orthogonal projector if and only if $\rr((AA^* A)^*)=\rr(AA^*)$.  However, 
$\rr(AA^*)=\rr(A)$ and $\rr((AA^* A)^*)=\rr(A^*AA^*)=\rr(A^*A)=\rr(A^*)$.

Similarly, according to Theorem \ref{thmA} (ii) and Theorem \ref{coridem_bis} (v),
$A \cored{A}$ is an orthogonal projector  if and only $\rr(AA^*A)=\rr(A^*A)$.
However, $\rr(A^*A)=\rr(A^*)$ and $\rr(AA^*A)=\rr(AA^*)=\rr(A)$.
\end{proof}

\begin{cor}\label{cor700ter} Let $A \in \ce_n$ and consider  $M$, $N\in\ce_n$ nonsingular
 and positive. The following statements hold.
\begin{itemize}
\item[{\rm (i)}] $AA^\dag_{M,N}$ is an orthogonal projector
if and only if $\rr(A) =\rr(MA)$. In particular, if $M=I_n$, then $AA^\dag_{M,N}$ is an orthogonal projector.
\item[{\rm (ii)}] $A^\dag_{M,N} A$ is an orthogonal projector
if and only if $\rr(A^*) =\rr(N^{-1} A^*)$. In particular, if $N=I_n$, then $A^\dag_{M,N} A$ is an orthogonal projector.
\end{itemize}
\end{cor}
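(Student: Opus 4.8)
The plan is to reduce everything to the results already established for the inverse along a matrix. By Theorem \ref{thmA} (iii), the weighted Moore-Penrose inverse $A^\dag_{M,N}$ is exactly the inverse of $A$ along $D := N^{-1} A^* M$, that is, $A^\dag_{M,N} = A^{\parallel D}$. With this identification in hand, I would invoke Corollary \ref{coridem_bis} (v)--(vi), which characterize when the idempotents $A A^{\parallel D}$ and $A^{\parallel D} A$ are orthogonal projectors: the former holds if and only if $\rr(AD) = \rr(D^*)$, and the latter if and only if $\rr((DA)^*) = \rr(D)$. Thus the whole statement becomes a matter of rewriting these two range equalities in terms of $A$, $M$ and $N$, using the identities $\rr(XY)=\rr(X)$ (for $Y$ nonsingular) and $\rr(XX^*)=\rr(X)$ recorded just before the corollary.

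For statement (i), I would first compute $D^* = (N^{-1}A^*M)^* = M A N^{-1}$, using that $M$ and $N$ (hence $N^{-1}$) are Hermitian; since $N^{-1}$ is nonsingular this gives $\rr(D^*) = \rr(MA)$. For the left-hand side $\rr(AD) = \rr(A N^{-1} A^* M)$, I would strip the trailing nonsingular factor $M$ to reduce to $\rr(A N^{-1}A^*)$. The key step is to exploit positivity of $N^{-1}$: writing $N^{-1} = N^{-1/2}N^{-1/2}$ with $N^{-1/2}$ Hermitian and nonsingular, one has $A N^{-1} A^* = (AN^{-1/2})(AN^{-1/2})^*$, so $\rr(XX^*)=\rr(X)$ yields $\rr(AN^{-1}A^*) = \rr(AN^{-1/2}) = \rr(A)$. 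Hence $\rr(AD)=\rr(A)$, and the projector condition $\rr(AD)=\rr(D^*)$ collapses to $\rr(A)=\rr(MA)$. The ``in particular'' clause is then immediate, since $M=I_n$ makes $\rr(A)=\rr(MA)$ trivially true.

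Statement (ii) follows by the same device applied on the other side. Here $\rr(D) = \rr(N^{-1}A^*M) = \rr(N^{-1}A^*)$ after discarding the nonsingular factor $M$, while $(DA)^* = A^*MAN^{-1}$ gives $\rr((DA)^*) = \rr(A^*MA)$. Using positivity of $M$, I would write $M=M^{1/2}M^{1/2}$ and $A^*MA = (M^{1/2}A)^*(M^{1/2}A)$, and apply the companion identity $\rr(X^*X)=\rr(X^*)$ (obtained by applying $\rr(XX^*)=\rr(X)$ to $X^*$) to get $\rr(A^*MA) = \rr((M^{1/2}A)^*) = \rr(A^*M^{1/2}) = \rr(A^*)$. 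Therefore $\rr((DA)^*)=\rr(D)$ becomes $\rr(A^*)=\rr(N^{-1}A^*)$, and setting $N=I_n$ gives the final assertion.

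The only genuinely non-routine step is recognizing that the quadratic expressions $AN^{-1}A^*$ and $A^*MA$ can be folded into the forms $XX^*$ and $X^*X$ via the positive square roots of $N^{-1}$ and $M$; once that is seen, the two range identities recorded before the corollary do all the remaining bookkeeping, so I expect no real obstacle beyond this observation.
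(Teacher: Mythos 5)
Your proposal is correct and follows essentially the same route as the paper: identify $A^\dag_{M,N}$ with $A^{\parallel(N^{-1}A^*M)}$ via Theorem \ref{thmA} (iii), apply Corollary \ref{coridem_bis} (v)--(vi), and simplify the resulting range equalities using a Hermitian square root of $N^{-1}$ (resp.\ $M$) together with $\rr(XX^*)=\rr(X)$. The paper's proof is the same argument with $Q^2=N^{-1}$ and $P^2=M$ in place of your $N^{-1/2}$ and $M^{1/2}$.
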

\begin{proof} According to Theorem \ref{thmA} (iii) and Theorem \ref{coridem_bis} (v), 
$A A^\dag_{M,N}$ is an orthogonal projector if and only if
$\rr(A N^{-1}A^* M) =\rr((N^{-1}A^*M)^*)$. This last condition
is equivalent to $\rr(A N^{-1}A^*) = \rr(M A)$. 
Since $N$ is nonsingular and positive, there exists a Hermitian and 
nonsingular matrix $Q$ such that $N^{-1} = Q^2$. Define $R=AQ$. Then, 
$AN^{-1}A^*= RR^*$, and thus, 
$$
\rr(A N^{-1}A^*) = \rr(RR^*) = \rr(R) =
\rr(AQ) = \rr(A).
$$

Similarly, according to Theorem \ref{thmA} (iii) and Theorem \ref{coridem_bis} (vi), $A^\dag_{M,N} A$ is an orthogonal projector if and only if 
$\rr((N^{-1}A^* M A)^*)=\rr(N^{-1}A^* M)$. This identity is equivalent to $\rr(A^*MA)=\rr(N^{-1}A^*)$. Since $M$ is nonsingular and positive, 
there exists a Hermitian and nonsingular matrix $P$ such that $M=P^2$. 
Then $A^*MA=A^*P^*PA=(PA)^*PA$, and thus,
$$
\rr(A^*MA)=\rr((PA)^*PA)=\rr((PA)^*)=\rr(A^*P^*)=\rr(A^*).
$$
\end{proof}


\section{The outer inverse with prescribed range and null space}

In first place the relationship between the outer inverse with prescribed range and null space and the $(D, E)$-inverse
will be considered ($D$, $E\in\matriz{m}{n}$).

\begin{thm}\label{thm70000} 
Let $A \in \matriz{n}{m}$ and $D, E \in \matriz{m}{n}$. The following statements are equivalent.
\begin{enumerate}[{\rm (i)}]
\item The matrix $A$ is $(D, E)$-invertible.
\item The outer inverse $A^{(2)}_{\rr(D), \kk(E)}$ exists.
\end{enumerate}
Furthermore, in this case $\ADE=A^{(2)}_{\rr(D), \kk(E)}$.
\end{thm}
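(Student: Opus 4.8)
The plan is to prove the two implications separately and to read off the identity $\ADE = A^{(2)}_{\rr(D),\kk(E)}$ from either direction, relying throughout on the uniqueness clauses in Theorem \ref{thm3036} and in the definition of the outer inverse with prescribed range and null space recalled in Section 1.

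First I would treat (i) $\Rightarrow$ (ii). Assume $A$ is $(D,E)$-invertible and set $X = \ADE$. By Theorem \ref{outer}, $X$ is an outer inverse of $A$, so it remains only to identify its range and null space. From $\rr(X) \subseteq \rr(D)$ (Definition \ref{def3035}) together with $\rr(D) = \rr(XAD) \subseteq \rr(X)$ (using $XAD = D$) I obtain $\rr(X) = \rr(D)$. For the null space, $\kk(E) \subseteq \kk(X)$ is part of Definition \ref{def3035}; for the reverse inclusion I would compare dimensions, using $\rk(X) = \rk(D)$ and, by Corollary \ref{cor_rank}, $\rk(E) = \rk(D)$, so that $\dim \kk(X) = n - \rk(D) = n - \rk(E) = \dim \kk(E)$, forcing $\kk(X) = \kk(E)$. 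The same corollary gives $\rk(D) = \rk(AD) \le \rk(A)$ and $\dim \kk(E) = n - \rk(D)$, so the dimension requirements in the definition of $A^{(2)}_{\rr(D),\kk(E)}$ are met; since $X$ is then an outer inverse with the prescribed range and null space, $A^{(2)}_{\rr(D),\kk(E)}$ exists and, by its uniqueness, equals $X = \ADE$.

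For (ii) $\Rightarrow$ (i) I would set $Z = A^{(2)}_{\rr(D),\kk(E)}$ and show that $X = Z$ meets all four conditions of Definition \ref{def3035}. The conditions $\rr(X) \subseteq \rr(D)$ and $\kk(E) \subseteq \kk(X)$ are immediate from $\rr(Z) = \rr(D)$ and $\kk(Z) = \kk(E)$. The two remaining equalities come from the projector structure of $AZ$ and $ZA$. Since $ZAZ = Z$, both $AZ$ and $ZA$ are idempotent; moreover $\rr(ZA) = \rr(Z) = \rr(D)$ (again from $ZAZ = Z$), so $ZA\xn = \xn$ for every $\xn \in \rr(D)$, whence $ZAD = D$. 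Computing $\rr(AZ) = A(\rr(D)) = \rr(AD)$ and $\kk(AZ) = \kk(E)$ then shows $AZ = P_{\rr(AD),\kk(E)}$, so that $I_n - AZ$ maps into $\kk(E)$ and therefore $EAZ = E$. With all four conditions verified, Theorem \ref{thm3036} yields that $A$ is $(D,E)$-invertible and, by uniqueness, $\ADE = Z = A^{(2)}_{\rr(D),\kk(E)}$.

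The routine but slightly delicate point, and the step I expect to require the most care, is the identification $\kk(AZ) = \kk(E)$ in the second implication: it is not automatic that the null space of the idempotent $AZ$ collapses onto $\kk(Z)$, and establishing it forces me to use the defining direct-sum condition $A(\rr(D)) \oplus \kk(E) = \ce^n$ (equivalent to the existence of $A^{(2)}_{\rr(D),\kk(E)}$) to see that $A$ restricts to an isomorphism of $\rr(D)$ onto $\rr(AD)$. This gives $\dim \rr(AZ) = \rk(D)$ and hence $\dim \kk(AZ) = \dim \kk(E)$, which together with $\kk(E) = \kk(Z) \subseteq \kk(AZ)$ yields equality. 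Everything else reduces to the idempotency of $AZ$ and $ZA$ and the rank bookkeeping already packaged in Corollary \ref{cor_rank} and Theorem \ref{thm60000}.
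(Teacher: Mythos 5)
Your proposal is correct and follows essentially the same route as the paper: both implications are verified directly against Definition \ref{def3035} using the outer-inverse property, with the identity $\ADE=A^{(2)}_{\rr(D),\kk(E)}$ read off from uniqueness. The one step you single out as delicate, $\kk(AZ)=\kk(E)$, actually needs no dimension counting: if $AZ\xn=\on$ then $Z\xn=ZAZ\xn=\on$, so $\kk(AZ)=\kk(Z)=\kk(E)$ automatically, and this observation (equivalently, that $AZ\xn-\xn\in\kk(Z)=\kk(E)$ for every $\xn$) is precisely how the paper obtains $EAZ=E$ in one line.
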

\begin{proof} Suppose that statement (i) holds and let $H=\ADE$. Then, according to 
Theorem~\ref{outer}, $H=HAH$. 
In addition, according to Definition \ref{def3035}, $\kk(H)=\kk(E)$ and $\rr(H)=\rr(D)$.
In particular, $A^{(2)}_{\rr(D), \kk(E)}$ exists and $A^{(2)}_{\rr(D), \kk(E)}=H$.

Now suppose that statement (ii) holds and let $L=A^{(2)}_{\rr(D), \kk(E)}$. In particular, 
$\rr(L) \subseteq \rr(D)$ and $\kk(E) \subseteq \kk(L)$. Since $L$ is an outer inverse of 
$A$ and  $\rr(L)=\rr(D)$, it is not difficult to prove that $LAD=D$. 
In addition, since $AL \xn - \xn \in \kk(L) = \kk(E)$, for all $\xn \in \ce^n$,
$EAL=E$. Therefore, $L=\ADE$.
\end{proof}

\begin{cor}\label{cor70011} 
Let $A \in \matriz{n}{m}$ and $D \in \matriz{m}{n}$. The following statements are equivalent.
\begin{enumerate}[{\rm (i)}]
\item The matrix $A$ is invertible along $D$.
\item The outer inverse $A^{(2)}_{\rr(D), \kk(D)}$ exists.
\end{enumerate}
Furthermore, in this case $A^{\parallel D}=A^{(2)}_{\rr(D), \kk(D)}$.
\end{cor}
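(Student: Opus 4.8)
The plan is to reduce Corollary \ref{cor70011} directly to Theorem \ref{thm70000} by specializing to the case $E=D$. Since the inverse of $A$ along the matrix $D$ is, by Definition \ref{def3037}, precisely the $(D,D)$-inverse of $A$, every hypothesis and conclusion of the corollary is the $E=D$ instance of the corresponding statement in the theorem. So the proof is essentially a one-line invocation, and the only thing to verify is that the specialization is legitimate and that the prescribed range and null space match up correctly.

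First I would recall that, by the discussion following Definition \ref{def3037}, the matrix $A$ is invertible along $D$ if and only if it is $(D,D)$-invertible, and in that case $A^{\parallel D}=A^{\parallel(D,D)}$. Then I would apply Theorem \ref{thm70000} with $E$ replaced by $D$: statement (i) of the theorem becomes ``$A$ is $(D,D)$-invertible'', which is exactly ``$A$ is invertible along $D$'' (statement (i) of the corollary); and statement (ii) of the theorem becomes ``the outer inverse $A^{(2)}_{\rr(D),\kk(D)}$ exists'', which is exactly statement (ii) of the corollary. The equivalence (i)$\Leftrightarrow$(ii) of the corollary is therefore immediate.

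For the final formula, Theorem \ref{thm70000} gives $A^{\parallel(D,D)}=A^{(2)}_{\rr(D),\kk(D)}$, and since $A^{\parallel(D,D)}=A^{\parallel D}$, the identity $A^{\parallel D}=A^{(2)}_{\rr(D),\kk(D)}$ follows at once. There is essentially no obstacle here: the entire content has already been established in the greater generality of Theorem \ref{thm70000}, and the corollary is just the diagonal case $D=E$. The only point requiring a moment's care is the matching of the prescribed subspaces, namely that the range of the inverse is $\rr(D)$ and its null space is $\kk(D)$, both of which specialize cleanly from $\rr(D)$ and $\kk(E)$ upon setting $E=D$.

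\begin{proof}
Apply Theorem \ref{thm70000} to the case $E=D$, recalling that the inverse of $A$ along the matrix $D$ coincides with the $(D,D)$-inverse of $A$.
\end{proof}
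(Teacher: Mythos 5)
Your proposal is correct and follows exactly the paper's own proof, which reads ``Apply Theorem \ref{thm70000} to the case $D=E$.'' The additional remark that the inverse along $D$ is the $(D,D)$-inverse (so the specialization is legitimate) is a sensible point to make explicit, and is consistent with the discussion following Definition \ref{def3037}.
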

\begin{proof} Apply Theorem \ref{thm70000}  to the case $D=E$.
\end{proof}

\indent Due to Theorem \ref{thm70000},  
the properties of the outer inverse with prescribed range and null space can be easily proved for the $(D, E)$-inverse
($D, E \in \matriz{m}{n}$).
Here only some of the most well known result are considered. Other results and the case of the inverse along a fixed matrix, i.e. when
$D=E$, are left to the reader.

\begin{cor}\label{cor70020}Let $A\in\mathbb{C}_{n, m}$ and $D$, $E\in\mathbb{C}_{m, n}$. $\ADE$
is the unique matrix $X$ that satifies the following equations:
$$
XAX=X, \hskip.3truecm AX=P_{\rr(AD), \kk(E)}, \hskip.3truecm XA=P_{\rr(D), \kk(EA)}.
$$
\end{cor}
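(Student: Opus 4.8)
The plan is to split the statement into an existence half (that $\ADE$ solves the three equations) and a uniqueness half (that nothing else does). For existence, the first equation $XAX=X$ is exactly the outer-inverse property already established in Theorem \ref{outer}. For the two projector equations I would invoke Theorem \ref{propidem}: part (i) gives that $A\ADE$ is idempotent with $\rr(A\ADE)=\rr(AD)$, and part (ii) gives $\kk(A\ADE)=\kk(E)$. Since an idempotent is completely determined by its range and null space, this forces $A\ADE=P_{\rr(AD),\kk(E)}$. Symmetrically, $\ADE A$ is idempotent with $\rr(\ADE A)=\rr(D)$ and $\kk(\ADE A)=\kk(EA)$, hence $\ADE A=P_{\rr(D),\kk(EA)}$. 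Thus $\ADE$ satisfies all three equations.

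For uniqueness, suppose $X\in\ce_{m,n}$ satisfies $XAX=X$, $AX=P_{\rr(AD),\kk(E)}$ and $XA=P_{\rr(D),\kk(EA)}$. The idea is to read off $\rr(X)$ and $\kk(X)$ from these relations and then appeal to the uniqueness of the outer inverse with prescribed range and null space recalled in the introduction. From $XA=P_{\rr(D),\kk(EA)}$ one has $\rr(XA)=\rr(D)$; combining $\rr(XA)\subseteq\rr(X)$ with $\rr(X)=\rr(XAX)\subseteq\rr(XA)$ (using $X=XAX$) yields $\rr(X)=\rr(D)$. Likewise, from $AX=P_{\rr(AD),\kk(E)}$ one has $\kk(AX)=\kk(E)$; the inclusion $\kk(X)\subseteq\kk(AX)$ is automatic, and if $AX\xn=\on$ then $X\xn=XAX\xn=\on$, giving $\kk(AX)\subseteq\kk(X)$, so $\kk(X)=\kk(E)$.

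Consequently $X$ is an outer inverse of $A$ (by $XAX=X$) with $\rr(X)=\rr(D)$ and $\kk(X)=\kk(E)$. Since $\ADE$ is assumed to exist, the outer inverse $A^{(2)}_{\rr(D),\kk(E)}$ exists and is unique, so $X=A^{(2)}_{\rr(D),\kk(E)}=\ADE$ by Theorem \ref{thm70000}. I expect the existence half to be essentially bookkeeping once Theorem \ref{propidem} is in hand, so the only place requiring genuine care is the uniqueness argument, specifically the two short range/null-space identities, where the relation $X=XAX$ must be used in the correct direction to upgrade the trivial inclusions to equalities.
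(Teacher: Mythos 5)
Your proof is correct, but it follows a genuinely different route from the paper. The paper's proof is a two-line reduction: it invokes Theorem \ref{thm70000} to identify $\ADE$ with $A^{(2)}_{\rr(D),\kk(E)}$ and then cites the external characterization of $A^{(2)}_{\T,\S}$ from \cite[Theorem 1]{WW} (as the unique solution of $XAX=X$, $AX=P_{A(\T),\S}$, $XA=P_{\T,(A^*(\S^\perp))^\perp}$), only checking that $A(\rr(D))=\rr(AD)$ and $(A^*(\kk(E)^\perp))^\perp=\kk(EA)$. You instead prove the characterization from scratch using the paper's internal machinery: Theorem \ref{outer} and Theorem \ref{propidem} (i)--(ii) for the existence half (together with the standard fact that an idempotent is determined by its range and null space), and for uniqueness a direct computation showing that any solution $X$ has $\rr(X)=\rr(XA)=\rr(D)$ and $\kk(X)=\kk(AX)=\kk(E)$ --- where you correctly use $X=XAX$ to upgrade the trivial inclusions $\rr(XAX)\subseteq\rr(XA)$ and $\kk(AX)\subseteq\kk(X)$ --- before appealing to the uniqueness of the outer inverse with prescribed range and null space recalled in the introduction. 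Both arguments are sound; the paper's buys brevity and places the corollary squarely within the known theory of $A^{(2)}_{\T,\S}$, while yours buys self-containment, making the corollary independent of the external reference \cite{WW} at the modest cost of a few lines of bookkeeping.
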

\begin{proof}Apply Theorem \ref{thm70000} and \cite[Theorem 1]{WW}. Note that, if 
$\mathcal{T}=\rr(D)$ and $\mathcal{S}=\kk(E)$,
then $A(\mathcal{T})=\rr(AD)$ and $(A^*(\mathcal{S}^{\perp}))^{\perp}=\kk(EA)$.
\end{proof}

\begin{cor}\label{cor70030} Let $A\in\mathbb{C}_{n, m}$ and $D$, $E\in\mathbb{C}_{m, n}$. Suppose that there exists $G\in \mathbb{C}_{m, n}$
such that $\rr(G)=\rr(D)$ and $\kk(G)=\kk(E)$. If $A^{\parallel (D, E)}$ exists, then $AG\in \ce_m$ and $GA\in \ce_n$ are group invertible and
$$
A^{\parallel (D, E)}= G(AG)^\#=(GA)^\# G=[GA\mid_{\rr(G)}]^{-1}G.
$$
\end{cor}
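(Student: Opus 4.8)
The plan is to reduce the statement to the inverse along the single matrix $G$ and then to identify the candidate expressions with the outer inverse with prescribed range and null space. First I would use Remark~\ref{rema40000}~(ii) with $D'=E'=G$: since $\rr(G)=\rr(D)$ and $\kk(G)=\kk(E)$, the existence of $\ADE$ is equivalent to that of $A^{\parallel G}$, and in that case $\ADE=A^{\parallel G}$. Hence it suffices to prove the three identities for $A^{\parallel G}$. Because $A^{\parallel G}$ exists, Theorem~\ref{thm4100} shows $A$ is both left and right invertible along $G$, so Theorem~\ref{thm3039}~(v)--(vi) gives at once that $AG$ and $GA$ are group invertible; this settles the first assertion. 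I would also record, from Corollary~\ref{cor_rank} applied with $D=E=G$, the rank equalities $\rk(AG)=\rk(GA)=\rk(G)$. Combined with the trivial inclusions $\rr(GA)\subseteq\rr(G)$ and $\kk(G)\subseteq\kk(AG)$, these force $\rr(GA)=\rr(G)$ and $\kk(AG)=\kk(G)$.

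By Corollary~\ref{cor70011}, $A^{\parallel G}=A^{(2)}_{\rr(G),\kk(G)}$ is the unique outer inverse of $A$ with range $\rr(G)$ and null space $\kk(G)$. So for each candidate I would check three things: that it is an outer inverse of $A$, that its range is $\rr(G)$, and that its null space is $\kk(G)$; uniqueness then yields equality with $A^{\parallel G}$. Writing $H=AG$ and using $HH^\#H=H$, a one-line computation gives $\bigl(G H^\#\bigr)A\bigl(G H^\#\bigr)=GH^\#HH^\#=GH^\#$, so $G(AG)^\#$ is an outer inverse. Its range lies in $\rr(G)$, and since $A\,G(AG)^\#=HH^\#$ has rank $\rk(H)=\rk(G)$ we get $\rk(G(AG)^\#)\ge\rk(G)$, whence $\rr(G(AG)^\#)=\rr(G)$; its null space contains $\kk(G)$ and has dimension $n-\rk(G)=\dim\kk(G)$, so it equals $\kk(G)$. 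Therefore $G(AG)^\#=A^{\parallel G}$.

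For $(GA)^\#G$ I would argue symmetrically with $K=GA$: from $K^\#KK^\#=K^\#$ one gets $\bigl(K^\# G\bigr)A\bigl(K^\# G\bigr)=K^\#KK^\#G=K^\#G$, so it is an outer inverse; its range lies in $\rr((GA)^\#)=\rr(GA)=\rr(G)$ and, since $(GA)^\#G\,A=K^\#K$ has rank $\rk(G)$, equals $\rr(G)$; and its null space again has the right dimension and contains $\kk(G)$. For the last expression I would note that $GA(\rr(G))\subseteq\rr(G)$ (because $GAG=G(AG)$) and that $\rr(G)\cap\kk(GA)=0$ by Theorem~\ref{thm3038}~(v), so that $GA\mid_{\rr(G)}$ is a bijection of $\rr(G)$. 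Since $\rr(G)=\rr(GA)=\rr(K)$, the standard description of the group inverse (the inverse of $K$ on $\rr(K)$ and zero on $\kk(K)$) identifies $K^\#$ with $[GA\mid_{\rr(G)}]^{-1}$ on $\rr(G)$, giving $(GA)^\#G=[GA\mid_{\rr(G)}]^{-1}G$.

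The only genuinely delicate part is the range/null-space bookkeeping --- verifying $\rr(GA)=\rr(G)$, $\kk(AG)=\kk(G)$ and $\rr(G)\cap\kk(GA)=0$, and identifying $(GA)^\#$ on $\rr(G)$ with the restricted inverse $[GA\mid_{\rr(G)}]^{-1}$. Everything else reduces, after the first paragraph, to the reflexive-inverse identities $HH^\#H=H$ and $K^\#KK^\#=K^\#$ and elementary rank counting.
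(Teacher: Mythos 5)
Your proof is correct, and its skeleton matches the paper's: both reduce the statement to the outer inverse with prescribed range and null space via Theorem~\ref{thm70000} (you use the $D=E$ specialization, Corollary~\ref{cor70011}, after first passing from $A^{\parallel (D,E)}$ to $A^{\parallel G}$ by Remark~\ref{rema40000}~(ii)). The difference is in what happens next: the paper simply cites \cite[Theorems 2.1 and 2.3]{W}, which already contain the representations $G(AG)^\#=(GA)^\#G=[GA\mid_{\rr(G)}]^{-1}G$ for $A^{(2)}_{\rr(G),\kk(G)}$, whereas you reprove those representations from scratch. Your verification is sound at every step: the group invertibility of $AG$ and $GA$ comes correctly out of Theorem~\ref{thm3039}~(v)--(vi), the rank identities $\rk(AG)=\rk(GA)=\rk(G)$ from Corollary~\ref{cor_rank} do force $\rr(GA)=\rr(G)$ and $\kk(AG)=\kk(G)$, the outer-inverse computations $\bigl(GH^\#\bigr)A\bigl(GH^\#\bigr)=GH^\#$ and $\bigl(K^\#G\bigr)A\bigl(K^\#G\bigr)=K^\#G$ are right, and the range/null-space bookkeeping (inclusion in one direction plus equality of dimensions) correctly pins each candidate down to the unique $A^{(2)}_{\rr(G),\kk(G)}$; the identification of $(GA)^\#$ on $\rr(GA)=\rr(G)$ with $[GA\mid_{\rr(G)}]^{-1}$ is the standard description of the group inverse and is justified by your observation that $\rr(G)\cap\kk(GA)=0$ (Theorem~\ref{thm3038}~(v)). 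What your route buys is self-containedness --- the corollary no longer depends on the external reference \cite{W} --- at the cost of about a page of argument that the paper compresses into a citation; conversely, the paper's one-line proof makes clear that the corollary is nothing more than the transfer of Wei's known representations of $A^{(2)}_{T,S}$ to the $(D,E)$-inverse.
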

\begin{proof}Apply Theorem \ref{thm70000},   \cite[Theorem 2.1]{W} and  \cite[Theorem 2.3]{W}.
\end{proof}

\begin{cor}\label{cor70040} Let $A\in\mathbb{C}_{n, m}$ and $D$, $E\in\mathbb{C}_{m, n}$. Suppose that there exists $G\in \mathbb{C}_{m, n}$
such that $\rr(G)=\rr(D)$ and $\kk(G)=\kk(E)$. If $A^{\parallel (D, E)}$ exists, then
$$
A^{\parallel (D, E)}=\lim_{\epsilon\to 0} (GA-\epsilon I_m)^{-1}G=\lim_{\epsilon\to 0} G(AG-\epsilon I_n)^{-1}.
$$
\end{cor}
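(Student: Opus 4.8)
The plan is to deduce both limits from the representation of $\ADE$ obtained in Corollary~\ref{cor70030}. Since $G\in\ce_{m,n}$ satisfies $\rr(G)=\rr(D)$ and $\kk(G)=\kk(E)$ and $\ADE$ exists, that corollary guarantees that $GA\in\ce_m$ and $AG\in\ce_n$ are group invertible and that
$$
\ADE = (GA)^\# G = G(AG)^\#.
$$
Consequently it suffices to prove the two convergence statements $\lim_{\epsilon\to 0}(GA-\epsilon I_m)^{-1}G=(GA)^\# G$ and $\lim_{\epsilon\to 0}G(AG-\epsilon I_n)^{-1}=G(AG)^\#$. Both are meaningful because $GA-\epsilon I_m$ is invertible exactly when $\epsilon$ is not an eigenvalue of $GA$, and as $GA$ has finitely many eigenvalues, invertibility holds for every $\epsilon\neq 0$ in a punctured neighbourhood of $0$; the same remark applies to $AG$.

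The key enabling step is a pair of range and kernel identities. For any group invertible $X$ one has $\rr(X^\#)=\rr(X)$ and $\kk(X^\#)=\kk(X)$. Applying this to $X=GA$ and combining $\rr(\ADE)=\rr(D)=\rr(G)$ with $\ADE=(GA)^\# G$ gives $\rr(G)=\rr(\ADE)\subseteq\rr((GA)^\#)=\rr(GA)$; as the reverse inclusion $\rr(GA)\subseteq\rr(G)$ is automatic, this yields $\rr(G)=\rr(GA)$. Dually, from $\kk(\ADE)=\kk(E)=\kk(G)$ and $\ADE=G(AG)^\#$ one obtains $\kk(AG)=\kk((AG)^\#)\subseteq\kk(\ADE)=\kk(G)$, and since $\kk(G)\subseteq\kk(AG)$ always holds, I get $\kk(G)=\kk(AG)$.

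With these identities in hand the limits follow from an elementary fact about the group inverse: if $X$ is group invertible and $V$ satisfies $\rr(V)\subseteq\rr(X)$, then $\lim_{\epsilon\to 0}(X-\epsilon I)^{-1}V=X^\# V$. To prove it I would use $XX^\# V=V$ (valid because $XX^\#$ is the projector onto $\rr(X)$ along $\kk(X)$ and $\rr(V)\subseteq\rr(X)$) to write the identity $(X-\epsilon I)X^\# V=V-\epsilon X^\# V$, whence $(X-\epsilon I)^{-1}V=X^\# V+\epsilon (X-\epsilon I)^{-1}X^\# V$; since $X^\# V\in\rr(X)$ and $(X-\epsilon I)^{-1}$ stays bounded on the invariant subspace $\rr(X)$ as $\epsilon\to 0$, the last term tends to $0$. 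Taking $X=GA$ and $V=G$ (legitimate because $\rr(G)=\rr(GA)$) gives the first limit. The second limit is the dual statement: the analogous left-sided fact asserts that for group invertible $Y$ and $W$ with $\kk(Y)\subseteq\kk(W)$ one has $\lim_{\epsilon\to 0}W(Y-\epsilon I)^{-1}=WY^\#$, which follows from the first fact applied to $Y^*$ and $W^*$ (using $(Y^*)^\#=(Y^\#)^*$ and the equivalence of $\kk(Y)\subseteq\kk(W)$ with $\rr(W^*)\subseteq\rr(Y^*)$) by taking conjugate transposes. Applying it to $Y=AG$ and $W=G$, which is permitted since $\kk(G)=\kk(AG)$, produces the second limit.

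The main obstacle is the singular direction: $(GA-\epsilon I_m)^{-1}$ diverges like $\epsilon^{-1}$ on $\kk(GA)$, so $\lim_{\epsilon\to 0}(GA-\epsilon I_m)^{-1}$ does not exist on its own. The whole point of the argument is that right multiplication by $G$ annihilates this divergent direction, and this is guaranteed precisely by $\rr(G)=\rr(GA)$ (respectively $\kk(G)=\kk(AG)$ for the left-hand factor in the second limit). Thus the substantive part of the proof is establishing these range and kernel identities; once they are available, the convergence is the routine group-inverse computation sketched above.
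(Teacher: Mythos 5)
Your proposal is correct, but it takes a genuinely different route from the paper. The paper's proof is a one-line citation: it identifies $\ADE$ with the outer inverse $A^{(2)}_{\rr(D),\kk(E)}$ via Theorem \ref{thm70000} and then invokes the known limit representation of $A^{(2)}_{T,S}$ from \cite[Theorem 2.4]{W}. You instead start from the representation $\ADE=(GA)^\# G=G(AG)^\#$ of Corollary \ref{cor70030} and prove the two limits from scratch. Your argument is sound at every step: the identities $\rr(G)=\rr(GA)$ and $\kk(G)=\kk(AG)$ do follow exactly as you say from $\rr(\ADE)=\rr(G)$, $\kk(\ADE)=\kk(G)$ and the standard facts $\rr(X^\#)=\rr(X)$, $\kk(X^\#)=\kk(X)$; the resolvent identity $(X-\epsilon I)^{-1}V=X^\# V+\epsilon(X-\epsilon I)^{-1}X^\# V$ is valid whenever $XX^\# V=V$; and the error term vanishes because $X^\# V$ lies in the invariant subspace $\rr(X)$ on which $X$ is invertible, so $(X-\epsilon I)^{-1}$ remains bounded there. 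The adjoint trick for the second limit is also fine (note only that $(Y-\epsilon I)^*=Y^*-\bar\epsilon I$, which is harmless since $\bar\epsilon\to 0$ as $\epsilon\to 0$). What your approach buys is self-containedness: modulo Corollary \ref{cor70030} you do not need Wei's integral/limit machinery for $A^{(2)}_{T,S}$ at all, and your lemma isolates the precise reason the limits exist, namely that multiplication by $G$ kills the singular directions of the resolvent because $\rr(G)=\rr(GA)$ and $\kk(G)=\kk(AG)$. What the paper's approach buys is brevity and uniformity with the surrounding corollaries, all of which are obtained by the same translation-plus-citation scheme.
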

\begin{proof} Apply Theorem \ref{thm70000} and   \cite[Theorem 2.4]{W}.
\end{proof}

\begin{cor}\label{cor70050} Let $A\in\mathbb{C}_{n, m}$ and $D$, $E\in\mathbb{C}_{m, n}$. Suppose that there exists $G\in \mathbb{C}_{m, n}$
such that $\rr(G)=\rr(D)$ and $\kk(G)=\kk(E)$. If $A^{\parallel (D, E)}$ exists, then
$$
A^{\parallel (D, E)}=\int_0^\infty {\rm exp}[-G(GAG)^*GAt]G(GAG)^*Gdt.
$$
\end{cor}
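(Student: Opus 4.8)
The plan is to imitate the preceding corollaries and push the entire statement through Theorem \ref{thm70000}. Since $\ADE$ is assumed to exist, that theorem identifies it with the outer inverse $A^{(2)}_{\rr(D),\kk(E)}$; and because $G$ satisfies $\rr(G)=\rr(D)$ and $\kk(G)=\kk(E)$, this outer inverse is literally $A^{(2)}_{\rr(G),\kk(G)}$. Thus the task reduces to exhibiting an integral representation of $A^{(2)}_{\rr(G),\kk(G)}$, entirely in terms of $G$ and $A$, matching the displayed formula.

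First I would set $W=G(GAG)^*G\in\matriz{m}{n}$ and observe that the asserted integrand is exactly $\exp(-WAt)\,W$, with $WA\in\ce_m$. I would then verify that $W$ is an admissible generator of the prescribed subspaces, namely $\rr(W)=\rr(G)$ and $\kk(W)=\kk(G)$. The inclusions $\rr(W)\subseteq\rr(G)$ and $\kk(G)\subseteq\kk(W)$ are immediate from the factored shape of $W$, and equality is a rank count: by Theorem \ref{thm60000} the existence of $\ADE$ forces $\rk(D)=\rk(E)=\rk(EAD)=r$, whence $\rk(G)=r$ and $\rk(GAG)=r$, so that $\rk(W)=r=\rk(G)$. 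With $\rr(W)=\rr(D)$ and $\kk(W)=\kk(E)$ secured, the displayed formula is precisely the integral representation $A^{(2)}_{\rr(W),\kk(W)}=\int_0^\infty\exp(-WAt)\,W\,dt$, which I would quote from \cite{W} as the integral analogue of the limit formula \cite[Theorem 2.4]{W} used in Corollary \ref{cor70040}.

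The step I expect to be the real obstacle is checking the spectral hypothesis that such an integral representation carries: every nonzero eigenvalue of $WA$ must lie in the open right half-plane, so that $\exp(-WAt)\,W$ decays and the improper integral returns the outer inverse rather than diverging. This is exactly why the particular matrix $W=G(GAG)^*G$ is chosen over an arbitrary generator of $(\rr(D),\kk(E))$. Indeed, writing $WA=G\big[(GAG)^*GA\big]$ and using that the nonzero eigenvalues of a product $XY$ agree with those of $YX$, the nonzero eigenvalues of $WA$ coincide with those of $\big[(GAG)^*GA\big]G=(GAG)^*(GAG)$, a positive semidefinite matrix of rank $r$. Hence the nonzero eigenvalues of $WA$ are strictly positive reals, the remaining eigenvalues are $0$ and sit on $\kk(W)=\kk(E)$, and the convergence required by the representation theorem is guaranteed. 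Once this positivity is in place, the theorem of \cite{W} applies verbatim and the corollary follows; the only remaining routine checks are that the resulting $X=\int_0^\infty\exp(-WAt)\,W\,dt$ satisfies $WAX=W$, $\rr(X)\subseteq\rr(D)$ and $\kk(E)\subseteq\kk(X)$, which together with the uniqueness in Theorem \ref{thm70000} pin it down as $\ADE$.
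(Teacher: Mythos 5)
Your proof is correct and follows essentially the same route as the paper, whose entire argument is ``Apply Theorem \ref{thm70000} and \cite[Theorem 2.2]{WD}'': identify $\ADE$ with the outer inverse $A^{(2)}_{\rr(G),\kk(G)}$ and quote the known integral representation. The only adjustment needed is the citation --- the integral representation is \cite[Theorem 2.2]{WD} (Wei--Djordjevi\'c), not a result of \cite{W} --- and your convergence argument via the positivity of the nonzero eigenvalues of $G(GAG)^*GA$ (which coincide with those of $(GAG)^*(GAG)$) is precisely the content of that cited theorem, so you have in effect reproved it rather than merely invoked it.
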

\begin{proof}
Apply Theorem \ref{thm70000} and   \cite[Theorem 2.2]{WD}.
\end{proof}

\begin{cor}\label{cor70060} Let $A\in\mathbb{C}_{n, m}$ and $D$, $E\in\mathbb{C}_{m, n}$. Suppose that there exists $G\in \mathbb{C}_{m, n}$
such that $\rr(G)=\rr(D)$ and $\kk(G)=\kk(E)$. If $A^{\parallel (D, E)}$ exists and the nonzero spectrum of $GA$ lies in the open left half plane, then
$$
A^{\parallel (D, E)}=-\int_0^\infty {\rm exp} (GAt)Gdt.
$$
\end{cor}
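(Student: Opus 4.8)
The plan is to reduce the identity to the group inverse of $GA$ and then evaluate the resulting improper integral, mirroring the strategy of the preceding corollaries in this section. By Theorem \ref{thm70000} the existence of $\ADE$ gives $\ADE = A^{(2)}_{\rr(D),\kk(E)}$, and since $\rr(G)=\rr(D)$ and $\kk(G)=\kk(E)$ this equals $A^{(2)}_{\rr(G),\kk(G)}$. More usefully, Corollary \ref{cor70030} already yields that $GA\in\ce_m$ is group invertible with $\ADE=(GA)^\# G$. Hence it suffices to prove that $(GA)^\# G=-\int_0^\infty {\rm exp}(GAt)G\,dt$.

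Writing $W=GA$ and letting $P=WW^\#$ be the idempotent with $\rr(P)=\rr(W)$ and $\kk(P)=\kk(W)$, the crucial structural step is the equality $\rr(GA)=\rr(G)$. The inclusion $\rr(GA)\subseteq\rr(G)$ is immediate, and $\rk(GA)=\rk(G)$ follows from Corollary \ref{cor_rank} applied to the pair $(G,G)$ (legitimate because $A^{\parallel(G,G)}$ exists by Remark \ref{rema40000}(ii)); together they force $\rr(W)=\rr(G)$. Consequently $P$ acts as the identity on $\rr(G)$, so $PG=G$ and therefore ${\rm exp}(Wt)\,G={\rm exp}(Wt)\,PG$ for every $t$.

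The analytic core is then the identity $\int_0^\infty {\rm exp}(Wt)P\,dt=-W^\#$ together with the convergence of the integral. Since $W$ is group invertible, $\ce^m=\rr(W)\oplus\kk(W)$ and $W$ vanishes on $\kk(W)$, so ${\rm exp}(Wt)$ restricts to the identity on $\kk(W)=\kk(P)$ and to ${\rm exp}(W_1t)$ on $\rr(W)$, where $W_1=W\!\mid_{\rr(W)}$ is invertible with spectrum equal to the nonzero spectrum of $GA$. The hypothesis that this spectrum lies in the open left half plane gives ${\rm exp}(W_1t)\to 0$ as $t\to\infty$ and $\int_0^\infty {\rm exp}(W_1t)\,dt=-W_1^{-1}$; inserting the factor $P$ annihilates the constant contribution coming from $\kk(W)$, so $\int_0^\infty {\rm exp}(Wt)P\,dt=-W^\#$. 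Combining the pieces, $-\int_0^\infty {\rm exp}(GAt)G\,dt=-\big(\int_0^\infty {\rm exp}(Wt)P\,dt\big)G=W^\# G=(GA)^\# G=\ADE$.

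I expect the main obstacle to be this last analytic step: rigorously justifying both the convergence of the improper integral and the evaluation $\int_0^\infty{\rm exp}(Wt)P\,dt=-W^\#$. Convergence is delicate precisely because ${\rm exp}(Wt)$ does not decay on $\kk(W)$; what saves the argument is the group invertibility of $GA$, which confines the non-decaying part to $\kk(P)$ where the right factor $G$ already annihilates it, combined with the open-left-half-plane condition, which forces exponential decay on $\rr(W)$. In keeping with the other corollaries of this section, this step may instead be quoted directly from the known integral representation of the outer inverse $A^{(2)}_{\rr(G),\kk(G)}$, after which no computation remains.
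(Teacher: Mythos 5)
Your proof is correct, and at the level of strategy it matches the paper's: both reduce the claim to the outer inverse $A^{(2)}_{\rr(G),\kk(G)}$ via Theorem \ref{thm70000}. The difference is in how the integral formula itself is handled. The paper's entire proof is the citation ``Apply Theorem \ref{thm70000} and \cite{W2}'', i.e.\ it outsources the identity $-\int_0^\infty \exp(GAt)G\,dt = A^{(2)}_{\rr(G),\kk(G)}$ to Wei's known integral representation. You instead pass through Corollary \ref{cor70030} to get $\ADE=(GA)^\# G$ and then prove the analytic identity $\int_0^\infty \exp(Wt)P\,dt=-W^\#$ from scratch, using the group-invertibility decomposition $\ce^m=\rr(W)\oplus\kk(W)$ and the left-half-plane hypothesis to control convergence. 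All the supporting steps check out: $A^{\parallel(G,G)}$ exists by Remark \ref{rema40000}(ii), so Corollary \ref{cor_rank} gives $\rk(GA)=\rk(G)$ and hence $\rr(GA)=\rr(G)$ and $PG=G$; and the block computation $\exp(Wt)P=\exp(W_1t)\oplus 0$ with $\sigma(W_1)$ equal to the nonzero spectrum of $GA$ correctly isolates the exponentially decaying part. What your route buys is a self-contained argument that makes visible exactly where the spectral hypothesis and the group invertibility of $GA$ are used; what the paper's route buys is brevity and consistency with the other corollaries of that section, which are all one-line applications of the $A^{(2)}_{T,S}$ literature. As you anticipated in your closing remark, the two are interchangeable here.
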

\begin{proof}
Apply Theorem \ref{thm70000} and   \cite{W2}.
\end{proof}


\section{Continuity and differentiability}

\noindent First of all note that  if 
$A\in\mathbb{C}_{n, m}$ and $D$, $E\in \mathbb{C}_{m, n}$ are such that
$A^{\parallel (D, E)}$ exists and $D'$, $E'\in\mathbb{C}_{n, m}$ are such that $D=DD'D$ and $E=EE'E$, then according to Definition \ref{def3035}, 
$DD'A^{\parallel (D, E)}=A^{\parallel (D, E)}$ 
(because $\rr(A^{\parallel (D, E)})\subseteq \rr(D)$) and 
$A^{\parallel (D, E)}=A^{\parallel (D, E)}E'E$.
The last idendity can be easily derived from the fact that there exists a matrix $N\in \mathbb{C}_{m, m}$ such that $A^{\parallel (D, E)}=NE$
(because $\kk(E)\subseteq\kk(\ADE)$).\par

\indent In order to characterize the continuity of the $(D, E)$-inverse, a technical lemma is needed. \par

\begin{lem}\label{lem6000} Let $A$, $B\in\mathbb{C}_{n, m}$ and $D$, $E$, $F$, $G\in \mathbb{C}_{m, n}$ be such that 
$A^{\parallel (D, E)}$ and $B^{\parallel (F, G)}$ exist. Let $D'$, $E'$, $F'$ and $G'\in \mathbb{C}_{n, m}$ be such that $D=DD'D$, $E=EE'E$, $F=FF'F$ and $G=GG'G$.
Then
\begin{equation*}
\begin{split}
B^{\parallel (F, G)}-A^{\parallel (D, E)} & = 
B^{\parallel (F, G)}(G'G-E'E)(I_n-AA^{\parallel (D, E)}) + 
B^{\parallel (F, G)}(A-B)A^{\parallel (D, E)} \\
& \phantom{=} + (I_m-B^{\parallel (F, G)}B)(FF'-DD')A^{\parallel (D, E)}.\\
\end{split}
\end{equation*}
\end{lem}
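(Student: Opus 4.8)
The plan is to verify the identity by a direct expansion of the right-hand side, after first recording the algebraic relations satisfied by the two inverses involved. Throughout I abbreviate $Y=\ADE$ and $X=B^{\parallel (F,G)}$, so that the claim reads: $X-Y$ equals the stated three-term sum.

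First I would collect the governing identities. From Definition \ref{def3035} applied to $Y$ one has $YAD=D$, $EAY=E$, $\rr(Y)\subseteq\rr(D)$ and $\kk(E)\subseteq\kk(Y)$; the last two inclusions, combined with $D=DD'D$ and $E=EE'E$, yield $DD'Y=Y$ and $Y=YE'E$, exactly as in the remark preceding the lemma. By the same argument applied to $X$ (i.e.\ to $B$, $F$, $G$, $F'$, $G'$) one obtains $XBF=F$, $GBX=G$, $FF'X=X$ and $X=XG'G$. Of all these, only four will actually be used in the computation: $EAY=E$ and $DD'Y=Y$ for $Y$, and $XBF=F$ and $XG'G=X$ for $X$.

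Next I would expand the three summands and simplify each using these four relations. In the first summand $X(G'G-E'E)(I_n-AY)$, the factor $XG'G$ collapses to $X$ by $XG'G=X$, turning $XG'G(I_n-AY)$ into $X-XAY$; and the remaining piece $XE'E(I_n-AY)$ vanishes, since $EAY=E$ gives $XE'EAY=XE'E$, so this summand equals $X-XAY$. The middle summand is simply $XAY-XBY$. In the third summand $(I_m-XB)(FF'-DD')Y$, the relation $DD'Y=Y$ replaces $DD'Y$ by $Y$ and $XBDD'Y$ by $XBY$, while $XBF=F$ forces $XBFF'Y=FF'Y$; the two copies of $FF'Y$ then cancel and this summand reduces to $-Y+XBY$.

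Finally I would add the three simplified pieces $X-XAY$, $\ XAY-XBY$ and $\ -Y+XBY$; the terms $\pm XAY$ and $\pm XBY$ cancel in pairs, and what remains is $X-Y$, which is $B^{\parallel (F,G)}-\ADE$, as required. There is no genuine obstacle here: the statement is an algebraic identity, and the only care needed is bookkeeping — tracking which of the four reduction rules applies to each product and checking that each cancellation is exact. The formula is evidently engineered so that the cross terms $XAY$ and $XBY$ telescope, which is precisely the feature that makes it useful for the continuity estimate that follows.
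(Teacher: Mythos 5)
Your proof is correct and takes essentially the same route as the paper's: both arguments rest on exactly the same four identities, namely $DD'A^{\parallel (D,E)}=A^{\parallel (D,E)}$, $EAA^{\parallel (D,E)}=E$, $B^{\parallel (F,G)}BF=F$ and $B^{\parallel (F,G)}G'G=B^{\parallel (F,G)}$, the only difference being one of direction — you expand the right-hand side and watch the cross terms $XAY$ and $XBY$ cancel, whereas the paper starts from the telescoping decomposition $X-Y=(X-XAY)+(XAY-XBY)+(XBY-Y)$ and identifies each piece with a summand. There is no gap.
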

\begin{proof}
Since $DD'A^{\| (D, E)}=A^{\| (D, E)}$ and $B^{\| (F, G)}BF=F$, then 
\begin{equation*}
\begin{split}
B^{\| (F, G)}BA^{\| (D, E)}-A^{\| (D, E)}&=-(I_m-B^{\| (F, G)}B)DD'A^{\| (D, E)} \\
&=[(I_m-B^{\| (F, G)}B)FF'-(I_m-B^{\| (F, G)}B)DD']A^{\| (D, E)}\\
&=(I_m-B^{\| (F, G)}B)(FF'-DD')A^{\| (D, E)}.\\
\end{split}
\end{equation*}

\indent In addition, since 
$B^{\| (F, G)}=B^{\| (F, G)}G'G$ and $EAA^{\| (D, E)}=E$,
\begin{equation*}
\begin{split}
B^{\| (F, G)}-B^{\| (F, G)}AA^{\| (D, E)} & = B^{\| (F, G)}G'G(I_n-AA^{\| (D, E)}) \\
&=B^{\| (F, G)}[G'G(I_n-AA^{\| (D, E)})-E'E(I_n-AA^{\| (D, E)})]\\
&=B^{\| (F, G)}(G'G-E'E)(I_n-AA^{\| (D, E)}).\\
\end{split}
\end{equation*}
Thus,
\begin{equation*}
\begin{split}
B^{\| (F, G)}-A^{\| (D, E)} & = 
B^{\| (F, G)}(G'G-E'E)(I_n-AA^{\| (D, E)}) +B^{\| (F, G)}AA^{\| (D, E)}\\
& \phantom{=} + (I_m-B^{\| (F, G)}B)(FF'-DD')A^{\| (D, E)}-B^{\| (F, G)}BA^{\| (D, E)}\\
& = B^{\| (F, G)}(G'G-E'E)(I_n-AA^{\| (D, E)})+ B^{\| (F, G)}(A-B)A^{\| (D, E)} \\
& \phantom{=} +(I_m-B^{\| (F, G)}B)(FF'-DD')A^{\| (D, E)}.\\
\end{split}
\end{equation*}
\end{proof}

\indent Next a result regarding the continuity of the $(D, E)$-inverse will be presented.\par

\begin{thm}\label{thm6001} 
Let $A\in\ce_{n,m}$ and $D, E\in \ce_{m,n}$ be such that $A^{\| (D,E)}$ exists and consider 
$(A_k)_{k\in\ene} \subset \ce_{n,m}$ and $(D_k)_{n\in\ene}$, $(E_k)_{n\in\ene} \subset \ce_{m,n}$ 
such that $A_k^{\| (D_n, E_n)}$ exists for each $k\in\ene$. Let $D', E' \in \ce_{n, m}$ and
$(D'_k)_{n\in\ene}$, $(E'_n)_{n\in\ene}\subset \ce_{n,m}$ be such that $D=DD'D$, $E=EE'E$, 
$D_k=D_kD'_k D_k$ and $E_k=E_kE'_kE_k$, for each $k\in\ene$. Suppose that $(A_k)_{k\in\ene}$,
 $(D_kD'_k)_{k\in\ene}$ and $(E'_kE_k)_{k\in\ene}$ converge to $A$, $DD'$ and 
$E'E$, respectively. Then, the following statememts are equivalent.\par
\begin{enumerate}[{\rm (i)}]
\item  $(A_k^{\| (D_k, E_k)})_{k\in\ene}$ converges to $A^{\parallel (D, E)}$.
\item The sequence $(A_k^{\| (D_k, E_k)})_{k\in\ene}\subset \ce_{n, m}$ is bounded.
\end{enumerate}
\end{thm}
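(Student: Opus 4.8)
The plan is to prove $\text{(i)}\Rightarrow\text{(ii)}$ trivially (a convergent sequence is bounded) and then concentrate on the substantive direction $\text{(ii)}\Rightarrow\text{(i)}$, using the identity established in Lemma \ref{lem6000} as the main engine. Writing $B_k=A_k^{\|(D_k,E_k)}$ and $X=A^{\|(D,E)}$, the lemma (applied with the data $B=A_k$, $F=D_k$, $G=E_k$, and their primed versions) gives
\begin{equation*}
B_k-X = B_k(E'_kE_k-E'E)(I_n-AX) + B_k(A-A_k)X + (I_m-B_kA_k)(D_kD'_k-DD')X.
\end{equation*}
Here I would carefully match the roles: the term $(G'G-E'E)$ becomes $(E'_kE_k-E'E)$, the factor $(A-B)$ becomes $(A-A_k)$, and $(FF'-DD')$ becomes $(D_kD'_k-DD')$. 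Each of these three differences tends to $0$ by the convergence hypotheses on $(A_k)$, $(E'_kE_k)$ and $(D_kD'_k)$.

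First I would dispatch the easy direction and set up notation, then rewrite the difference $B_k-X$ so as to isolate the three error terms. The key observation is that every term on the right carries one of the three null sequences $E'_kE_k-E'E$, $A-A_k$, or $D_kD'_k-DD'$ as a factor, while the remaining factors are either fixed matrices (namely $X$, $I_n-AX$, $I_m$) or the sequence $B_k$ itself. Since $\|I_m-B_kA_k\|\le 1+\|B_k\|\,\|A_k\|$ and the products $B_k(\cdots)$, $B_k(A-A_k)$, $(I_m-B_kA_k)(\cdots)$ all multiply a bounded-or-convergent quantity against one of these null sequences, the plan is to bound $\|B_k-X\|$ by a sum of terms each of the shape (bounded quantity involving $\|B_k\|$) $\times$ (something tending to $0$).

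The main obstacle — and the whole point of hypothesis (ii) — is that the factors $B_k$ appearing on the right are not a priori controlled: without boundedness the products $B_k(A-A_k)X$ need not vanish even though $A-A_k\to 0$. Under (ii) there is a constant $M$ with $\|B_k\|\le M$ for all $k$, so $\|I_m-B_kA_k\|\le 1+M\sup_k\|A_k\|$ is also uniformly bounded (the $A_k$ converge, hence are bounded). Consequently the three error terms are bounded respectively by $M\|E'_kE_k-E'E\|\,\|I_n-AX\|$, $M\|A-A_k\|\,\|X\|$, and $(1+M\sup_k\|A_k\|)\|D_kD'_k-DD'\|\,\|X\|$, each of which tends to $0$. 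Summing gives $\|B_k-X\|\to 0$, which is exactly statement (i).

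The one place demanding care is verifying that the lemma's hypotheses genuinely apply termwise: I would check that $A_k^{\|(D_k,E_k)}$ exists for each $k$ (given), that the primed matrices satisfy $D_k=D_kD'_kD_k$ and $E_k=E_kE'_kE_k$ (given), and that the fixed-index instance $A^{\|(D,E)}$ likewise satisfies its outer/range/kernel conditions so that $AX$ and $XA$ behave as idempotents with the correct range and null space — this is where Theorem \ref{outer} and Definition \ref{def3035} are invoked to justify the algebraic simplifications inside Lemma \ref{lem6000}. Once that bookkeeping is in place, the estimate above closes the argument with no further computation.
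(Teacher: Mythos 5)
Your argument is correct and is precisely what the paper intends: its entire proof of this theorem is the single line ``Apply Lemma \ref{lem6000}'', and your substitution $B=A_k$, $F=D_k$, $G=E_k$ into that identity, followed by the routine estimate that each of the three terms is a product of a quantity bounded via hypothesis (ii) with one of the null sequences $E'_kE_k-E'E$, $A-A_k$, $D_kD'_k-DD'$, is exactly the intended application. The easy direction (i)$\Rightarrow$(ii) is, as you say, immediate since convergent sequences are bounded.
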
 
\begin{proof}Apply Lemma \ref{lem6000}.
\end{proof}

If the Moore-Penrose inverse is used, then a more general result can be presented.

\begin{thm}\label{thm6002} 
Let $A\in\ce_{n, m}$ and $D, E\in \ce_{m, n}$ be such that $A^{\| (D,E)}$ exists and consider 
$(A_k)_{k\in\ene}\subset \ce_{n,m}$ and $(D_k)_{k\in\ene}$, $(E_k)_{n\in\ene} \subset \ce_{m,n}$
such that $A_k^{\| (D_k, E_k)}$ exists for each $k\in\ene$. Suppose that $(A_k)_{k\in\ene}$ converges to $A$.  Then, the following statememts are equivalent.
\begin{enumerate}[{\rm (i)}]
\item $(A_k^{\| (D_k, E_k)})_{k\in\ene}$ converges to $A^{\parallel (D, E)}$.
\item The sequences $(D_k^\dag)_{k\in\ene}$ and $(E_k^\dag)_{k\in\ene}$ converge to $D^\dag$ and 
$E^\dag$, respectively, and the sequence $(A_k^{\| (D_k, E_k)})_{k\in\ene}\subset \ce_{n, m}$ is bounded. 
\end{enumerate}
\end{thm}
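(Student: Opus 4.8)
The plan is to deduce Theorem~\ref{thm6002} from Theorem~\ref{thm6001} by specializing the inner inverses in the latter to the Moore--Penrose inverses. Concretely, I would take $D'=D^\dag$, $E'=E^\dag$, $D'_k=D_k^\dag$ and $E'_k=E_k^\dag$; the four identities $D=DD^\dag D$, $E=EE^\dag E$, $D_k=D_kD_k^\dag D_k$ and $E_k=E_kE_k^\dag E_k$ hold automatically from the defining equations of the Moore--Penrose inverse, so these are admissible choices. With them, the hypotheses of Theorem~\ref{thm6001} that go beyond $A_k\to A$ read $D_kD_k^\dag\to DD^\dag$ and $E_k^\dag E_k\to E^\dag E$, that is, the convergence of the orthogonal projectors onto $\rr(D_k)$ and onto $\kk(E_k)^\perp=\rr(E_k^*)$.

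The core of the argument is thus to relate the condition $D_k^\dag\to D^\dag$ (respectively $E_k^\dag\to E^\dag$) of statement~(ii) to these projector convergences. For the implication (ii)$\Rightarrow$(i) I would invoke the classical continuity theorem for the Moore--Penrose inverse (see \cite{ben}): along a convergent sequence the pseudoinverses converge precisely when the ranks are eventually constant, and in that case the range and co-range projectors converge as well. Applying this to $D_k$ and to $E_k$ yields $D_kD_k^\dag\to DD^\dag$ and $E_k^\dag E_k\to E^\dag E$, after which Theorem~\ref{thm6001} identifies statement~(i) with the boundedness of $(A_k^{\|(D_k,E_k)})_{k\in\ene}$, which is the remaining clause of statement~(ii).

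For (i)$\Rightarrow$(ii) the boundedness is immediate, since every convergent sequence in $\ce_{n,m}$ is bounded. To recover $D_k^\dag\to D^\dag$ and $E_k^\dag\to E^\dag$, I would use that $\rr(A_k^{\|(D_k,E_k)})=\rr(D_k)$ and $\kk(A_k^{\|(D_k,E_k)})=\kk(E_k)$, together with Corollary~\ref{cor_rank}, which forces $\rk(D_k)=\rk(E_k)=\rk(E_kA_kD_k)$ and the analogous equalities for $A^{\|(D,E)}$. Convergence of the generalized inverses then pins down the ranks of $D_k$ and $E_k$ and makes the associated projectors converge, and feeding this back through the Moore--Penrose continuity theorem gives the convergence of the pseudoinverses themselves.

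The step I expect to be the main obstacle is exactly this passage between convergence of the Moore--Penrose inverses and convergence of the range and co-range projectors. Convergence of $D_k^\dag$ does not, by itself, control $D_k$ (the pseudoinverse is insensitive to a uniform rescaling that sends $D_k$ to infinity), so the argument must genuinely use the standing existence hypotheses --- that both $A_k^{\|(D_k,E_k)}$ and $A^{\|(D,E)}$ exist, hence the rank identities of Theorem~\ref{thm60000} and Corollary~\ref{cor_rank} --- to stabilize the ranks and render the classical continuity theorem applicable in \emph{both} directions. Carrying out this rank bookkeeping correctly, rather than the final appeal to Theorem~\ref{thm6001}, is where the real work lies.
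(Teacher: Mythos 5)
Your overall strategy coincides with the paper's: the direction (ii)$\Rightarrow$(i) is exactly the paper's one-line argument (apply Theorem~\ref{thm6001} with $D'=D^\dag$, $E'=E^\dag$, $D'_k=D_k^\dag$, $E'_k=E_k^\dag$, the Penrose equations guaranteeing admissibility), and for (i)$\Rightarrow$(ii) you correctly identify that everything hinges on stabilizing the ranks of $D_k$ and $E_k$ so that Stewart's continuity theorem can be applied. But that is precisely the step you do not carry out, and the sentence you offer in its place --- ``convergence of the generalized inverses then pins down the ranks of $D_k$ and $E_k$'' --- is not a proof and is false as a general principle: rank is only lower semicontinuous, so from $A_k^{\|(D_k,E_k)}\to A^{\|(D,E)}$ alone one gets $\rk(A^{\|(D,E)})\le\liminf_k\rk(A_k^{\|(D_k,E_k)})$ and nothing more (compare $\mathrm{diag}(1,1/k)\to\mathrm{diag}(1,0)$, where the rank drops in the limit). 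The existence hypotheses and Corollary~\ref{cor_rank} give you the \emph{identities} $\rk(A_k^{\|(D_k,E_k)})=\rk(D_k)=\rk(E_k)$, but they do not by themselves force these integers to equal $\rk(D)$ for large $k$.

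The missing idea, which is the only genuinely nontrivial point of the proof, is to pass to the idempotents: since $A^{\|(D,E)}$ and each $A_k^{\|(D_k,E_k)}$ are outer inverses (Theorem~\ref{outer}), the products $A_k^{\|(D_k,E_k)}A_k$ and $A^{\|(D,E)}A$ are idempotent, and for an idempotent the trace equals the rank. Convergence of $A_k^{\|(D_k,E_k)}A_k$ to $A^{\|(D,E)}A$ (which uses $A_k\to A$) therefore gives $\tr(A_k^{\|(D_k,E_k)}A_k)\to\tr(A^{\|(D,E)}A)$, i.e.\ convergence of a sequence of integers, whence $\rk(A_k^{\|(D_k,E_k)})=\rk(A^{\|(D,E)})$ for all large $k$; Theorem~\ref{thm60000} then converts this into $\rk(D_k)=\rk(D)$ and $\rk(E_k)=\rk(E)$, and Stewart's theorem finishes. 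Without this trace argument (or an equivalent device) your (i)$\Rightarrow$(ii) direction has a hole exactly where you predicted the real work would be. One further remark, which applies equally to the paper's own proof: the appeal to Stewart's theorem tacitly requires $D_k\to D$ and $E_k\to E$, which the statement of the theorem does not list among its hypotheses; you inherit this issue rather than create it.
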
 
\begin{proof} Suppose that $(A_k^{\| (D_k,E_k)})_{k\in\ene}$ converges to $A^{\| (D, E)}$. Then, 
$(A_k^{\| (D_k, E_k)}A_k)_{k\in\ene}$ converges to $A^{\| (D, E)}A$. Consequently, 
$$
\lim_{k\to\infty} \tr(A_k^{\| (D_k, E_k)}A_k)=\tr(A^{\| (D, E)}A).
$$
\noindent Since $A^{\| (D, E)}$ is an outer inverse (Theorem \ref{outer}), $A^{\| (D, E)}A$ 
is an idempotent. Thus, 
$$
\tr(A^{\| (D, E)}A)=\rk(A^{\| (D, E)}A)=\rk(A^{\| (D, E)}).
$$
Similarly, $\tr(A_k^{\| (D, E)}A)=\rk(A_k^{\| (D, E)})$, for $k\in\ene$. As a result, 
for sufficiently large $k\in\ene$, $\rk(A_k^{\| (D,E)})=\rk(A^{\| (D, E)})$. However, 
according to Theorem \ref{thm60000}
$$  
\rk(D_k)=\rk(E_k)=\rk(A_k^{\| (D, E)})=\rk(A^{\| (D, E)})=\rk(D)=\rk(E).
$$
Therefore, according to \cite{S}, $(D_k^\dag)_{k\in\ene}$ converges to $D^\dag$  and  $(E_k^\dag)_{k\in\ene}$ to $E^\dag$.
The remaining part of statement (ii) is evident.

If statement (ii) holds, then apply Theorem \ref{thm6001} with $D'=D^\dag$ and $E'=E^\dag$, $k\in\ene$.
\end{proof}

In the following corollary, the case of the inverse along a matrix will be considered.

\begin{thm}\label{thm6003} 
Let $A\in\ce_{n,m}$ and $D\in \ce_{m,n}$ be such that $A^{\| D}$ exists and consider 
$(A_k)_{k\in\ene} \subset \ce_{n,m}$ and $(D_k)_{k\in\ene}\subset \ce_{m, n}$ such that 
$A_k^{\| D_k}$ exists for each $k\in\ene$. Suppose that $(A_k)_{k\in\ene}$ converges to $A$.  
Then, the following statememts are equivalent.
\begin{enumerate}[{\rm (i)}]
\item $(A_k^{\| D_k})_{k\in\ene}$ converges to $A^{\parallel D}$.
\item The sequences $(D_k^\dag)_{k\in\ene}$ converges to $D^\dag$ and  the sequence 
$(A_k^{\| D_k})_{k\in\ene}\subset \ce_{n,m}$ is bounded. 
\end{enumerate}
\end{thm}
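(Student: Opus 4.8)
The plan is to deduce this statement directly from Theorem~\ref{thm6002} by specializing to the diagonal case $E=D$. The key observation is that the inverse along a matrix is exactly the $(D,D)$-inverse: whenever $A^{\parallel D}$ is defined one has $A^{\parallel D}=A^{\parallel(D,D)}$, and likewise $A_k^{\parallel D_k}=A_k^{\parallel(D_k,D_k)}$ for each $k\in\ene$. Consequently the hypotheses of the present statement --- existence of $A^{\parallel D}$, existence of every $A_k^{\parallel D_k}$, and convergence $A_k\to A$ --- are precisely the hypotheses of Theorem~\ref{thm6002} read with the choices $E=D$ and $E_k=D_k$. So the first step is simply to record this identification and verify that the hypothesis lists match term by term.

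The second step is to check that the conclusion of Theorem~\ref{thm6002} specializes to the desired equivalence. Statement~(i) there, that $(A_k^{\parallel(D_k,E_k)})_{k\in\ene}$ converges to $A^{\parallel(D,E)}$, becomes statement~(i) here upon the substitution. In statement~(ii) of Theorem~\ref{thm6002} the two convergence requirements $D_k^\dag\to D^\dag$ and $E_k^\dag\to E^\dag$ coincide once $E=D$ and $E_k=D_k$, so they collapse into the single requirement $D_k^\dag\to D^\dag$; meanwhile the boundedness of $(A_k^{\parallel(D_k,E_k)})_{k\in\ene}$ becomes the boundedness of $(A_k^{\parallel D_k})_{k\in\ene}$. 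Thus the equivalence (i)$\Leftrightarrow$(ii) of Theorem~\ref{thm6002} yields exactly the equivalence claimed here, and the entire argument reduces to the one line ``Apply Theorem~\ref{thm6002} to the case $D=E$.''

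I do not expect any genuine obstacle. The only point that warrants a moment's attention is confirming that the two separate convergence conditions in Theorem~\ref{thm6002}~(ii) really do reduce to a single condition under $E=D$, which is immediate since the assertions $D_k^\dag\to D^\dag$ and $E_k^\dag\to E^\dag$ become the same statement. In particular there is no need to revisit the underlying perturbation machinery of Lemma~\ref{lem6000} and Theorem~\ref{thm6001}, nor the rank-stability argument (via $\rk(D_k)=\rk(E_k)=\rk(A_k^{\parallel(D_k,E_k)})$ from Theorem~\ref{thm60000}) that powers the harder implication of Theorem~\ref{thm6002}; all of that is inherited automatically.
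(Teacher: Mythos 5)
Your proposal is correct and coincides with the paper's own proof, which is exactly the one-line reduction ``Apply Theorem~\ref{thm6002} for the case $D=E$.'' The extra verification you supply --- that the two convergence conditions in Theorem~\ref{thm6002}~(ii) collapse into one under $E=D$, $E_k=D_k$ --- is the right (and only) point to check.
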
 
\begin{proof} Apply Theorem \ref{thm6002} for the case $D=E$.
\end{proof}

Now the differentiability will be studied.

\begin{thm}\label{thm6004}
Let $J \subseteq \erre$ be an open set and consider $t_0 \in J$. Let functions
$ \mathcal{A}\colon J  \to \ce_{n,m}$ and  $\mathcal{D}$, 
$\mathcal{E}\colon J \to \ce_{m,n}$ be such that 
$\mathcal{A}(t)$ is $(\mathcal{D}(t), \mathcal{E}(t))$-invertible, for any $t \in J$, and $\mathcal{A}$, $\mathcal{D}$ and $\mathcal{E}$ are differentiable at $t_0$. Suppose that $f\colon J\to \ce_{m,n}$,
$f(t)= \mathcal{A}(t)^{\| (\mathcal{D}(t),\mathcal{E}(t))}$, is a bounded function in $J$ and that the functions
$\mathcal{D}$, $\mathcal{E}$ have local constant rank in $J$. Then, the function $f$ is differentiable at $t_0$ and
\begin{align*}
f'(t_0)&=\mathcal{A}(t_0)^{\| (\mathcal{D}(t_0), \mathcal{E}(t_0))}\left[\mathcal{G}'(t_0)\mathcal{E}(t_0)+\mathcal{G}(t_0)\mathcal{E}'(t_0)\right]
\left[I_n - \mathcal{A}(t_0) \mathcal{A}(t_0)^{\parallel (\mathcal{D}(t_0),\mathcal{E}(t_0))} \right] \\
&\phantom{=} + \left[I_n - \mathcal{A}(t_0)^{\| (\mathcal{D}(t_0),\mathcal{E}(t_0))}\mathcal{A}(t_0) \right]\left[\mathcal{D}'(t_0)\mathcal{F}(t_0)+\mathcal{D}(t_0)\mathcal{F}'(t_0)\right]
\mathcal{A}(t_0)^{\| (\mathcal{D}(t_0),\mathcal{E}(t_0))}\\
&\phantom{=} + \mathcal{A}(t_0)^{\| (\mathcal{D}(t_0),\mathcal{E}(t_0))}\mathcal{A}'(t_0) \mathcal{A}(t_0)^{\| (\mathcal{D}(t_0),\mathcal{E}(t_0))},
\end{align*}
\noindent where $\mathcal{F}$, $\mathcal{G}\colon J\to \ce_{n,m}$ are the functions $\mathcal{F}(t)=(\mathcal{D}(t))^\dag$ and 
$\mathcal{G}(t)=(\mathcal{E}(t))^\dag$.
\end{thm}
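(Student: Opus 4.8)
The plan is to reduce the statement to a single application of the difference formula in Lemma \ref{lem6000}, followed by a termwise passage to the limit. Fix $t_0$ and, for $t$ in a neighborhood of $t_0$ contained in $J$, apply Lemma \ref{lem6000} with $B=\mathcal{A}(t)$, $F=\mathcal{D}(t)$, $G=\mathcal{E}(t)$ and $A=\mathcal{A}(t_0)$, $D=\mathcal{D}(t_0)$, $E=\mathcal{E}(t_0)$, taking the inner inverses to be the Moore-Penrose inverses, i.e.\ $F'=\mathcal{F}(t)$, $G'=\mathcal{G}(t)$, $D'=\mathcal{F}(t_0)$ and $E'=\mathcal{G}(t_0)$. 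The reflexive identities required by that lemma ($\mathcal{D}=\mathcal{D}\mathcal{F}\mathcal{D}$, $\mathcal{E}=\mathcal{E}\mathcal{G}\mathcal{E}$, and their counterparts at $t$) hold because $\mathcal{F}=\mathcal{D}^\dag$ and $\mathcal{G}=\mathcal{E}^\dag$. Dividing the resulting identity by $t-t_0$ gives, with $f(t)=\mathcal{A}(t)^{\| (\mathcal{D}(t),\mathcal{E}(t))}$,
\begin{align*}
\frac{f(t)-f(t_0)}{t-t_0} &= f(t)\,\frac{\mathcal{G}(t)\mathcal{E}(t)-\mathcal{G}(t_0)\mathcal{E}(t_0)}{t-t_0}\left(I_n-\mathcal{A}(t_0)f(t_0)\right) \\
&\phantom{==} + f(t)\,\frac{\mathcal{A}(t_0)-\mathcal{A}(t)}{t-t_0}\,f(t_0) \\
&\phantom{==} + \left(I_m-f(t)\mathcal{A}(t)\right)\frac{\mathcal{D}(t)\mathcal{F}(t)-\mathcal{D}(t_0)\mathcal{F}(t_0)}{t-t_0}\,f(t_0).
\end{align*}
The whole proof then consists in letting $t\to t_0$ in this expression summand by summand.

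Two facts must be secured before the limit can be taken. First, $f$ must be continuous at $t_0$, so that $f(t)\to f(t_0)$ may be used in each product; this is exactly the content of Theorem \ref{thm6002} applied along an arbitrary sequence $t_k\to t_0$ in $J$, whose hypotheses are met because $\mathcal{A}(t_k)\to\mathcal{A}(t_0)$ (differentiability at $t_0$ forces continuity there), $f$ is bounded by assumption, and $\mathcal{D}(t_k)^\dag\to\mathcal{D}(t_0)^\dag$, $\mathcal{E}(t_k)^\dag\to\mathcal{E}(t_0)^\dag$ by the continuity criterion for the Moore-Penrose inverse (\cite{S}), which applies precisely because the ranks of $\mathcal{D}$ and $\mathcal{E}$ are locally constant. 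Second, the matrix products $\mathcal{G}\mathcal{E}=\mathcal{E}^\dag\mathcal{E}$ and $\mathcal{D}\mathcal{F}=\mathcal{D}\mathcal{D}^\dag$ appearing in the quotients must be differentiable at $t_0$; this is where the local constant rank hypothesis is used in full strength, since the Moore-Penrose inverse of a matrix function that is differentiable at $t_0$ and of locally constant rank is itself differentiable at $t_0$ (classical perturbation theory of the Moore-Penrose inverse), so that $\mathcal{F}$ and $\mathcal{G}$, and with them the products $\mathcal{G}\mathcal{E}$ and $\mathcal{D}\mathcal{F}$, are differentiable at $t_0$.

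With these in hand the limit is routine. By continuity, $f(t)\to f(t_0)$ and $I_m-f(t)\mathcal{A}(t)\to I_m-f(t_0)\mathcal{A}(t_0)$; by differentiability and the product rule, the three difference quotients converge respectively to the derivatives at $t_0$ of $\mathcal{G}\mathcal{E}$, of $-\mathcal{A}$, and of $\mathcal{D}\mathcal{F}$, that is, to $\mathcal{G}'(t_0)\mathcal{E}(t_0)+\mathcal{G}(t_0)\mathcal{E}'(t_0)$, to $-\mathcal{A}'(t_0)$, and to $\mathcal{D}'(t_0)\mathcal{F}(t_0)+\mathcal{D}(t_0)\mathcal{F}'(t_0)$. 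The existence of these limits already proves that $f$ is differentiable at $t_0$, and collecting the three products yields the asserted expression for $f'(t_0)$. I expect the only genuine obstacle to be the second preliminary fact: establishing that $\mathcal{D}^\dag$ and $\mathcal{E}^\dag$ are differentiable (not merely continuous) at $t_0$ under the constant-rank assumption, and hence that the quotients of the projector-type products $\mathcal{D}\mathcal{D}^\dag$ and $\mathcal{E}^\dag\mathcal{E}$ converge; everything else is bookkeeping resting on Lemma \ref{lem6000} and Theorem \ref{thm6002}.
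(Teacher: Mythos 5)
Your proof follows the paper's argument essentially verbatim: the same application of Lemma \ref{lem6000} with the Moore--Penrose inverses playing the role of the reflexive inverses, the same appeals to Stewart's continuity criterion and to Golub--Pereyra for the continuity and differentiability of $\mathcal{D}^\dag$ and $\mathcal{E}^\dag$ under the local constant rank hypothesis, and the same termwise passage to the limit after dividing by $t-t_0$. The only point worth noting is that your (correct) limit $-\mathcal{A}'(t_0)$ for the middle difference quotient actually produces $-\mathcal{A}(t_0)^{\| (\mathcal{D}(t_0),\mathcal{E}(t_0))}\mathcal{A}'(t_0)\mathcal{A}(t_0)^{\| (\mathcal{D}(t_0),\mathcal{E}(t_0))}$ rather than the $+$ sign displayed in the statement --- a sign slip that the paper's own proof shares.
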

\begin{proof} Observe that according to Lemma \ref{lem6000}, for any $t \in J$, 
\begin{equation*}
\begin{split}
f(t)-f(t_0)  & =  \mathcal{A}(t)^{\parallel (\mathcal{D}(t), \mathcal{E}(t))} \left[\mathcal{E}(t)^\dag \mathcal{E}(t) - \mathcal{E}(t_0)^\dag \mathcal{E}(t_0)\right] 
\left[I_n - \mathcal{A}(t_0) \mathcal{A}(t_0)^{\parallel (\mathcal{D}(t_0),\mathcal{E}(t_0))} \right] \\ 
& \phantom{=} + 
\left[I_n - \mathcal{A}(t)^{\| (\mathcal{D}(t),\mathcal{E}(t))}\mathcal{A}(t) \right] \left[ \mathcal{D}(t)\mathcal{D}(t)^\dag - \mathcal{D}(t_0)\mathcal{D}(t_0)^\dag \right] 
\mathcal{A}(t_0)^{\| (\mathcal{D}(t_0),\mathcal{E}(t_0))} \\ 
& \phantom{=} 	+ \mathcal{A}(t)^{\parallel (\mathcal{D}(t),\mathcal{E}(t))} \left[\mathcal{A}(t_0)-\mathcal{A}(t) \right] \mathcal{A}(t_0)^{\| (\mathcal{D}(t_0),\mathcal{E}(t_0))}. \\
\end{split}
\end{equation*}
\indent Now, according to \cite{S}, the functions $\mathcal{F}$, $\mathcal{G}\colon J\to \ce_{n,m}$, $\mathcal{F}(t)=(\mathcal{D}(t))^\dag$ and 
$\mathcal{G}(t)=(\mathcal{E}(t))^\dag$ are continuous. Consequently, according to Theorem \ref{thm6003}, 
$$
\lim_{t \to t_0}\mathcal{A}(t)^{\parallel (\mathcal{D}(t),\mathcal{E}(t))}  \frac{\left[\mathcal{A}(t_0)-\mathcal{A}(t) \right]}{t-t_0} \mathcal{A}(t_0)^{\| (\mathcal{D}(t_0),\mathcal{E}(t_0))}=
 \mathcal{A}(t_0)^{\| (\mathcal{D}(t_0),\mathcal{E}(t_0))}\mathcal{A}'(t_0) \mathcal{A}(t_0)^{\| (\mathcal{D}(t_0),\mathcal{E}(t_0))}.
$$
\noindent In addition, according to \cite{GP}, the functions $\mathcal{F}$, $\mathcal{G}\colon J\to \ce_{n,m}$ are also differentiable. Thus

\begin{equation*}
\begin{split}
\lim_{t \to t_0}&\mathcal{A}(t)^{\parallel (\mathcal{D}(t), \mathcal{E}(t))} \frac{\left[\mathcal{E}(t)^\dag \mathcal{E}(t) - \mathcal{E}(t_0)^\dag \mathcal{E}(t_0)\right]}{t-t_0} 
\left[I_n - \mathcal{A}(t_0) \mathcal{A}(t_0)^{\parallel (\mathcal{D}(t_0),\mathcal{E}(t_0))} \right] =\\ 
&\mathcal{A}(t_0)^{\parallel (\mathcal{D}(t_0), \mathcal{E}(t_0))}\left[\mathcal{G}'(t_0)\mathcal{E}(t_0)+\mathcal{G}(t_0)\mathcal{E}'(t_0)\right]
\left[I_n - \mathcal{A}(t_0) \mathcal{A}(t_0)^{\parallel (\mathcal{D}(t_0),\mathcal{E}(t_0))} \right]. 
\end{split}
\end{equation*}
Similarly,
\begin{equation*}
\begin{split}
\lim_{t \to t_0}&\left[I_n - \mathcal{A}(t)^{\| (\mathcal{D}(t),\mathcal{E}(t))}\mathcal{A}(t) \right]  \frac{\left[ \mathcal{D}(t)\mathcal{D}(t)^\dag - \mathcal{D}(t_0)\mathcal{D}(t_0)^\dag \right]}{t-t_0}
\mathcal{A}(t_0)^{\| (\mathcal{D}(t_0),\mathcal{E}(t_0))}= \\ 
&\left[I_n - \mathcal{A}(t_0)^{\| (\mathcal{D}(t_0),\mathcal{E}(t_0))}\mathcal{A}(t_0) \right]\left[\mathcal{D}'(t_0)\mathcal{F}(t_0)+\mathcal{D}(t_0)\mathcal{F}'(t_0)\right]
\mathcal{A}(t_0)^{\| (\mathcal{D}(t_0),\mathcal{E}(t_0))}.
\end{split}
\end{equation*}
\end{proof}

Now the differentiability of the inverse along a matrix will be studied.

\begin{cor}\label{cor6005}
Let $J \subseteq \erre$ be an open set and consider $t_0 \in J$. Let functions
$\mathcal{A}\colon J  \to \ce_{n,m}$ and  $\mathcal{D}\colon J \to \ce_{m,n}$ be such that 
$\mathcal{A}(t)$ is invertible along $\mathcal{D}(t)$ for any $t \in J$, and $\mathcal{A}$ and  $\mathcal{D}$ are differentiable at $t_0$. Suppose that $f\colon J\to \ce_{m,n}$,
$f(t)= \mathcal{A}(t)^{\| \mathcal{D}(t)}$, is a bounded function in $J$ and that the function
$\mathcal{D}$ has local constant rank in $J$. Then, the function $f$ is differentiable at $t_0$ and
\begin{align*}
f'(t_0)&=\mathcal{A}(t_0)^{\parallel \mathcal{D}(t_0)}\left[\mathcal{F}'(t_0)\mathcal{D}(t_0)+\mathcal{F}(t_0)\mathcal{D}'(t_0)\right]
\left[I_n - \mathcal{A}(t_0) \mathcal{A}(t_0)^{\parallel \mathcal{D}(t_0)} \right] \\
& \phantom{=} + \left[I_n - \mathcal{A}(t_0)^{\| \mathcal{D}(t_0)}\mathcal{A}(t_0) \right]\left[\mathcal{D}'(t_0)\mathcal{F}(t_0)+\mathcal{D}(t_0)\mathcal{F}'(t_0)\right]
\mathcal{A}(t_0)^{\| (\mathcal{D}(t_0)}\\
&\phantom{=} + \mathcal{A}(t_0)^{\| \mathcal{D}(t_0)}\mathcal{A}'(t_0) \mathcal{A}(t_0)^{\| \mathcal{D}(t_0)},
\end{align*}
\noindent where $\mathcal{F}\colon J\to \ce_{n,m}$ is the function $\mathcal{F}(t)=(\mathcal{D}(t))^\dag$.
\end{cor}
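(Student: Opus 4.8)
The plan is to obtain this corollary as the specialization of Theorem \ref{thm6004} to the case $\mathcal{E} = \mathcal{D}$. First I would recall that, as established in Section 5 (compare Definition \ref{def3037} with Definition \ref{def3035}), the inverse of $A$ along $D$ is exactly the $(D, D)$-inverse of $A$, so that $\mathcal{A}(t)^{\parallel \mathcal{D}(t)} = \mathcal{A}(t)^{\parallel (\mathcal{D}(t), \mathcal{D}(t))}$ for every $t \in J$. Consequently the function $f(t) = \mathcal{A}(t)^{\parallel \mathcal{D}(t)}$ considered here coincides with the function $f$ of Theorem \ref{thm6004} when one takes $\mathcal{E} = \mathcal{D}$.

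Next I would verify that the hypotheses of Theorem \ref{thm6004} hold under this choice. The invertibility of $\mathcal{A}(t)$ along $\mathcal{D}(t)$ for each $t \in J$ furnishes the required $(\mathcal{D}(t), \mathcal{E}(t))$-invertibility; the differentiability of $\mathcal{A}$ and $\mathcal{D}$ at $t_0$ gives the differentiability of $\mathcal{A}$, $\mathcal{D}$ and $\mathcal{E} = \mathcal{D}$ at $t_0$; the boundedness of $f$ is assumed; and the local constant rank of $\mathcal{D}$ provides the local constant rank of both $\mathcal{D}$ and $\mathcal{E} = \mathcal{D}$. Hence Theorem \ref{thm6004} applies, and in particular $f$ is differentiable at $t_0$.

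Finally I would substitute $\mathcal{E} = \mathcal{D}$ into the formula for $f'(t_0)$ given by Theorem \ref{thm6004}. Since $\mathcal{E} = \mathcal{D}$ forces the auxiliary function $\mathcal{G}(t) = (\mathcal{E}(t))^\dag$ to coincide with $\mathcal{F}(t) = (\mathcal{D}(t))^\dag$, the first summand $\mathcal{A}(t_0)^{\parallel (\mathcal{D}(t_0), \mathcal{E}(t_0))}[\mathcal{G}'(t_0)\mathcal{E}(t_0) + \mathcal{G}(t_0)\mathcal{E}'(t_0)][I_n - \mathcal{A}(t_0)\mathcal{A}(t_0)^{\parallel (\mathcal{D}(t_0), \mathcal{E}(t_0))}]$ becomes $\mathcal{A}(t_0)^{\parallel \mathcal{D}(t_0)}[\mathcal{F}'(t_0)\mathcal{D}(t_0) + \mathcal{F}(t_0)\mathcal{D}'(t_0)][I_n - \mathcal{A}(t_0)\mathcal{A}(t_0)^{\parallel \mathcal{D}(t_0)}]$, which is exactly the first term in the stated formula; the remaining two summands already involve only $\mathcal{D}$ and $\mathcal{F}$ and carry over verbatim once the two-parameter symbol $\parallel (\mathcal{D}, \mathcal{E})$ is rewritten as $\parallel \mathcal{D}$. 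There is no genuine difficulty in this argument: the only point requiring care is the clean identification $\mathcal{G} = \mathcal{F}$ together with the consistent replacement of $\mathcal{A}^{\parallel (\mathcal{D}, \mathcal{E})}$ by $\mathcal{A}^{\parallel \mathcal{D}}$ throughout the three summands.
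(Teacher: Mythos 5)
Your proposal is correct and follows exactly the paper's own route: the paper proves Corollary \ref{cor6005} simply by applying Theorem \ref{thm6004} in the case $\mathcal{E}=\mathcal{D}$, which is precisely the specialization you carry out (with the identification $\mathcal{G}=\mathcal{F}$ made explicit). Your verification of the hypotheses and the substitution into the derivative formula are the same argument, just written out in more detail than the paper bothers to.
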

\begin{proof}Apply Theorem \ref{thm6004} for the case $D=E$.
\end{proof}

\section{Explicit computations} \label{s4}

In this section some explicit ways to compute $\ADE$ will be given.

\begin{thm}\label{th24}
Let $A \in \matriz{n}{m}$, $D,E \in \matriz{m}{n}$, $r=\rk(D)$ and $s=\rk(E)$.
If $\{ {\bf v}_1, \ldots, {\bf v}_r \}$ is a basis of $\rr(D)$ and
$\{ {\bf w}_1, \ldots, {\bf w}_{n-s} \}$ is a basis of $\kk(E)$, then the following affirmations
are equivalent:
\begin{itemize}
\item[{\rm (i)}] $\ADE$ exists.
\item[{\rm (ii)}] The matrix $[A {\bf v}_1 \ \cdots \ A {\bf v}_r \ {\bf w}_1 \ \cdots
\ {\bf w}_{n-s}]$ is nonsingular.
\end{itemize}
In this situation,
$$
\ADE = \left[ {\bf v}_1 \ \cdots \ {\bf v}_r \ {\bf 0} \ \cdots \ {\bf 0} \right]
\left[A {\bf v}_1 \ \cdots \ A {\bf v}_r \ {\bf w}_1 \ \cdots
\ {\bf w}_{n-s}\right]^{-1}.
$$
\end{thm}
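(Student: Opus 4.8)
The plan is to first observe that each of the two statements forces $r=s$, and then to work under this hypothesis by identifying the displayed matrix with the map $\phi$ of Theorem \ref{th_3.5} and invoking Theorem \ref{thm3033}. For the reduction: if $\ADE$ exists, then Theorem \ref{thm60000} gives $\rk(D)=\rk(E)$, i.e. $r=s$; and the matrix $[A\vn_1 \cdots A\vn_r \ \wn_1 \cdots \wn_{n-s}]$ has its columns in $\ce^n$ and has $r+(n-s)$ of them, so it can only be nonsingular if it is square, which again forces $r=s$. Hence when $r\neq s$ both statements are false and the equivalence holds vacuously, so I would assume $r=s$ from here on.

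Assume $r=s$ and write $M=[A\vn_1 \cdots A\vn_r \ \wn_1 \cdots \wn_{n-r}]\in\ce_n$. Since $\{\vn_1,\dots,\vn_r\}$ is a basis of $\rr(D)$ we have $\rr(AD)=A(\rr(D))=\mathrm{span}\{A\vn_1,\dots,A\vn_r\}$, while $\{\wn_1,\dots,\wn_{n-r}\}$ is a basis of $\kk(E)$. I would then argue that $M$ is nonsingular if and only if $\rr(AD)\oplus\kk(E)=\ce^n$ and $\rk(AD)=r$: if $M$ is nonsingular its $n$ columns are independent, so the $A\vn_i$ are independent (giving $\rk(AD)=r$) and together with the $\wn_j$ span $\ce^n$, a dimension count making the sum direct; conversely these two conditions make the $n$ columns a basis of $\ce^n$. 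But this pair of conditions is exactly statement (iii) of Theorem \ref{thm3033}, which is equivalent to $A$ being $(D,E)$-invertible, i.e. to statement (i). This yields (i)$\Leftrightarrow$(ii).

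For the formula, set $P=[\vn_1 \cdots \vn_r \ \on \cdots \on]\in\ce_{m,n}$ and $N=PM^{-1}$, and verify that $N$ is the matrix $R$ characterized in Proposition \ref{pro3034} (equivalently Remark \ref{nota1}): $\kk(R)=\kk(E)$ and $R\yn=f^{-1}(\yn)$ for all $\yn\in\rr(AD)$, where $f\colon\rr(D)\to\rr(AD)$, $f(\xn)=A\xn$. Writing $\en_1,\dots,\en_n$ for the standard basis of $\ce^n$, the columns of $M$ give $M\en_i=A\vn_i$ and $M\en_{r+j}=\wn_j$, hence $M^{-1}(A\vn_i)=\en_i$ and $M^{-1}(\wn_j)=\en_{r+j}$; since $P\en_i=\vn_i$ and $P\en_{r+j}=\on$, it follows that $N(A\vn_i)=\vn_i=f^{-1}(A\vn_i)$ and $N\wn_j=\on$. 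Thus $N$ agrees with $f^{-1}$ on the spanning set $\{A\vn_i\}$ of $\rr(AD)$ and $\kk(E)\subseteq\kk(N)$; the reverse inclusion comes from a dimension count using $\kk(N)=M(\kk(P))$ and $\dim\kk(P)=n-r=\dim\kk(E)$ (independence of the $\vn_i$ gives $\kk(P)=\mathrm{span}\{\en_{r+1},\dots,\en_n\}$). Therefore $N=R=\ADE$ by the uniqueness in Proposition \ref{pro3034} together with Theorem \ref{thm3036}.

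The routine parts are the two dimension counts; the main point to get right is the reduction to $r=s$ together with the clean identification of the nonsingularity of $M$ with Theorem \ref{thm3033}(iii), since everything else then follows from the already-established characterizations and uniqueness.
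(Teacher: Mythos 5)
Your proposal is correct and follows essentially the same route as the paper: both directions rest on Theorem \ref{thm60000} together with the characterization $\rr(AD)\oplus\kk(E)=\ce^n$, $\rk(AD)=\rk(D)$ from Theorems \ref{thm3033} and \ref{thm3036}, with the same dimension counts. The only cosmetic difference is in the formula: the paper computes $\ADE\,[A\vn_1 \cdots A\vn_r\ \wn_1 \cdots \wn_{n-r}]=[\vn_1 \cdots \vn_r\ \on \cdots \on]$ directly from the defining identities of $\ADE$ and then inverts, whereas you verify that $PM^{-1}$ meets the uniqueness characterization of Proposition \ref{pro3034}; both are valid and interchangeable.
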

\begin{proof} If statement (i) holds, then according to Theorem \ref{thm60000}, $\rk(D)=\rk(E)$. 
Let $X_1 = [A \vn_1 \ \cdots \ A \vn_r]$ and $X_2 = [\wn_1 \ \cdots \ \wn_{n-r}]$.
Observe that $n-r=\rk(X_2)$ because $\{ \wn_i \}_{i=1}^{n-r}$ is a basis. According to 
Theorem~\ref{thm3036} and Theorem \ref{thm3033}, $\rk(X)=\rk(X_1)+\rk(X_2)$ (because
$\rr(AD) \cap \kk(E)=0$). Since $\{ A \vn_i \}_{i=1}^r$ span $\rr(AD)$ and
$r=\rk(D)=\rk(AD) = \dim \rr(AD)$, the vectors $\{ A \vn_i \}_{i=1}^r$
are linearly independent, and thus, $r=\rk(X_1)$. Therefore, $n=\rk(X)$ and by recalling
that $X \in \ce_n$, the nonsingularity of $X$ is obtained.

Suppose that statement (ii) holds. Since the matrix in statement (ii) must be square, $\rk(D)=r=s=\rk(E)$. 
In addition, since the matrix in statement (ii) is invertible, $\rk(AD)=\rk(D)$ and $\rr(AD)\oplus \kk(E)=\ce^n$.
Consequently, according to Theorem \ref{thm3033} and Theorem \ref{thm3036}, $\ADE$ exists.

Now let ${\bf v}$ be any arbitrary vector in $\rr(D)$  and let ${\bf x} \in \ce^n$ be such that
${\bf v} = D\bf x$. According to Definition \ref{def3035}, 
$\ADE A {\bf v} = \ADE A D {\bf x} =
D {\bf x} = {\bf v}$. In addition, $\ADE {\bf w} = {\bf 0}$ for any ${\bf w} \in \kk(E)$. Therefore,
$$
\ADE
\left[A {\bf v}_1 \ \cdots \ A {\bf v}_r \ {\bf w}_1 \ \cdots
\ {\bf w}_{n-r}\right] =
\left[ {\bf v}_1 \ \cdots \ {\bf v}_r \ {\bf 0} \ \cdots \ {\bf 0} \right].
$$
\end{proof}

Next {\tt m-file} that can be executed in Matlab or in Octave shows how Theorem~\ref{th24}
can be used to compute $\ADE$.

\begin{verbatim}
function J = pseudo(A,D,E)
[n m] = size(A);
r = rank(D);
s = rank(E);
E1 = null(E); % An orthonormal basis of N(E)
D1 = orth(D); % An orthonormal basis of R(D)
aux = [A*D1 E1];
if not(r==s)
   disp('There does not exist the pseudoinverse')
   disp('because rank(D) is not equal to rank(E)')
else
  if det(aux)==0
   disp('There does not exist the pseudoinverse')
   disp('because the matrix of Th. 4.1 is singular')
   else
     J=[D1 zeros(n,n-r)]*inv(aux);
   end
end
\end{verbatim}

\begin{thm}\label{t007}
Let $A \in \matriz{n}{m}$ and $D, E \in \matriz{m}{n}$ be such that 
$\ADE$ exists. Let $r=\rk(D)=\rk(E)=\rk(EAD)$. Let $P\in \ce_m$
and $Q \in \ce_n$ be two nonsingular matrices such that
$$
PEADQ=\mat{I_r}{0}{0}{0}.
$$
Then
\begin{equation} \label{ASD}
PE=\begin{bmatrix} X \\ 0 \end{bmatrix} \qquad \text{and} \qquad
DQ=\begin{bmatrix} Y &  0 \\ \end{bmatrix},
\end{equation}
where $X \in \matriz{r}{n}$,
$Y \in \matriz{m}{r}$. Furthermore, $\ADE=YX$.
\end{thm}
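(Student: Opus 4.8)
The plan is to first determine the block structure of $PE$ and $DQ$ forced by the normal form of $EAD$, then extract a single invertible block relation, and finally recognise $YX$ as the outer inverse with prescribed range and null space, so that Theorem~\ref{thm70000} identifies it with $\ADE$.

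First I would establish \eqref{ASD}. Since $\ADE$ exists, $A$ is both left and right $(D,E)$-invertible (Theorem~\ref{thm3036}), so Theorem~\ref{thm301}(ii) yields $\rr(E)=\rr(EAD)$ and Theorem~\ref{thm302}(ii) yields $\kk(D)=\kk(EAD)$. Using that the column space is unchanged under right multiplication by the nonsingular matrix $Q$ and that the null space is unchanged under left multiplication by the nonsingular matrix $P$, I get $\rr(PE)=\rr(PEAD)=\rr(PEADQ)$ and $\kk(DQ)=Q^{-1}\kk(D)=Q^{-1}\kk(EAD)=\kk(PEADQ)$. But $\rr(PEADQ)$ is the column space of $\mat{I_r}{0}{0}{0}$, namely the vectors of $\ce^m$ whose last $m-r$ coordinates vanish, forcing the bottom $m-r$ rows of $PE$ to be zero; dually $\kk(PEADQ)$ is the null space of $\mat{I_r}{0}{0}{0}$, namely the vectors of $\ce^n$ whose first $r$ coordinates vanish, forcing the last $n-r$ columns of $DQ$ to be zero. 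This gives $PE=\left[\begin{smallmatrix} X \\ 0 \end{smallmatrix}\right]$ with $X\in\matriz{r}{n}$ and $DQ=[\,Y\ 0\,]$ with $Y\in\matriz{m}{r}$, which is exactly \eqref{ASD}.

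Next I would read off the governing identity. Multiplying in blocks, $\mat{I_r}{0}{0}{0}=PEADQ=(PE)A(DQ)=\mat{XAY}{0}{0}{0}$, whence $XAY=I_r$. In particular $X$ has full row rank $r$ and $Y$ has full column rank $r$.

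Finally I would check that $YX=\ADE$. From $XAY=I_r$ it follows that $(YX)A(YX)=Y(XAY)X=YX$, so $YX$ is an outer inverse of $A$. Using that $X$ is surjective onto $\ce^r$ gives $\rr(YX)=\rr(Y)=\rr(DQ)=\rr(D)$, and using that $Y$ is injective gives $\kk(YX)=\kk(X)=\kk(PE)=\kk(E)$. Hence $YX$ is an outer inverse of $A$ with range $\rr(D)$ and null space $\kk(E)$, i.e.\ $YX=A^{(2)}_{\rr(D),\kk(E)}$, which equals $\ADE$ by Theorem~\ref{thm70000}. Equivalently, one may verify Definition~\ref{def3035} directly via $YXADQ=[\,YXAY\ 0\,]=[\,Y\ 0\,]=DQ$ and $PEAYX=\left[\begin{smallmatrix} XAYX \\ 0 \end{smallmatrix}\right]=\left[\begin{smallmatrix} X \\ 0 \end{smallmatrix}\right]=PE$, together with the range and null space inclusions, and conclude by uniqueness of the $(D,E)$-inverse. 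I expect the only real obstacle to be the first step: the vanishing of the off blocks of $PE$ and $DQ$ rests on the equalities $\rr(E)=\rr(EAD)$ and $\kk(D)=\kk(EAD)$, and once these are in hand everything else is routine block arithmetic.
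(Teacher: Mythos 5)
Your proof is correct and follows essentially the same route as the paper: both arguments derive the vanishing blocks in \eqref{ASD} from the equalities $\rr(E)=\rr(EAD)$ and $\kk(D)=\kk(EAD)$, extract $XAY=I_r$ by block multiplication, and then confirm that $YX$ satisfies the defining conditions of $\ADE$. The only cosmetic difference is that you phrase the final identification through $A^{(2)}_{\rr(D),\kk(E)}$ and Theorem~\ref{thm70000}, whereas the paper checks Definition~\ref{def3035} directly --- a verification you also supply.
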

\begin{proof}
Write $P$ and $Q$ as
\begin{equation*} 
P=\begin{bmatrix} P_1 \\ P_2 \end{bmatrix} \quad \text{and} \quad
Q=\begin{bmatrix} Q_1 &  Q_2 \end{bmatrix},
\end{equation*}
where $P_1 \in \matriz{r}{m}$,
$P_2 \in \matriz{m-r}{m}$, $Q_1 \in \matriz{n}{r}$ and $Q_2 \in \matriz{n}{n-r}$.
Now 
\begin{equation}\label{peadq}
\mat{I_r}{0}{0}{0} = PEADQ =
\begin{bmatrix} P_1 \\ P_2 \end{bmatrix} EAD \begin{bmatrix} Q_1 & Q_2 \end{bmatrix} =
\mat{P_1EADQ_1}{P_1EADQ_2}{P_2EADQ_1}{P_2EADQ_2},
\end{equation}
which implies $P_1EADQ_2=0$, $P_2EADQ_1=0$ and $P_2EADQ_2=0$. Therefore,
$$
P_2EADQ = P_2EAD[Q_1 \ Q_2] = [P_2EADQ_1 \ P_2EADQ_2]=0.
$$
The nonsingularity of $Q$ leads to $P_2EAD=0$. In a similar way,
$EADQ_2=0$.

Since $\rk(D)=\rk(E)=\rk(EAD)$, the equalities 
$\mathcal{R}(EAD)=\mathcal{R}(E)$ and $\mathcal{N}(EAD)=\mathcal{N}(D)$ are obtained.
In addition, since $EADQ_2=0$ and $\mathcal{N}(EAD)=\mathcal{N}(D)$, it can be deduced $DQ_2=0$.
Since $(EAD)^*P_2^* = (P_2EAD)^* = 0$, any column of $P_2^*$ 
belongs to $\kk((EAD)^*) = \rr(EAD)^\perp = \rr(E)^\perp = \kk(E^*)$,
and therefore, $E^*P_2^*=0$, i.e., $P_2E=0$.
If $Y=DQ_1$ and $X=P_1E$, then (\ref{ASD}) holds.

Now it will be proved that $YX$ satisfies Definition \ref{def3035}.
First, observe that $XAY = P_1E A DQ_1 = I_r$.
Now,
by \eqref{ASD}
$$
YXADQ = YXA[ Y \ 0] = [YXAY \ 0] = [Y \ 0] = DQ,
$$
and the nonsingularity of $Q$ leads to $YXAD=D$. Similarly,
$$
PEAYX = \begin{bmatrix} X \\ 0 \end{bmatrix} AYX = \begin{bmatrix} XAYX \\ 0 \end{bmatrix} =
\begin{bmatrix} X \\ 0 \end{bmatrix} = PE,
$$
and thus, $EAYX=E$. Since $YX = DQ_1X$, te inclusion $\rr(YX) \subseteq \rr(D)$ can be obtained.
In addition, since $YX = YP_1E$, it can be deduced $\kk(E) \subseteq \kk(YX)$.
\end{proof}

\begin{rema}\label{rem100000}\rm
Observe that it is possible to use either the Gaussian elimination method or the singular value decomposition of $EAD$ to determine $P$ and $Q$. Let $r=\rk(EAD)$.
\begin{itemize}
\item[{\rm (i)}] By using the Gauss-Jordan elimination,
there exist an elementary row operation matrix $P \in \matriz{m}{m}$ and an
elementary column operation matrix $Q \in \matriz{n}{n}$,
such that $PEADQ=\mat{I_r}{0}{0}{0}$.
\item[{\rm (ii)}] Let $EAD=USV^*$ be the singular value decomposition of $EAD$, where $S=\Sigma\oplus0$,
$\Sigma={\rm diag}(\sigma_1, \ldots, \sigma_r)$. Hence, $U^*EADV=\Sigma\oplus0$, which implies
$$
(\Sigma^{-1/2}\oplus I_{m-r})U^*EADV(\Sigma^{-1/2}\oplus I_{n-r})=\mat{I_r}{0}{0}{0}.
$$
Let $P=(\Sigma^{-1/2}\oplus I_{m-r})U^*$ and $Q=V(\Sigma^{-1/2}\oplus I_{n-r})$. It is 
easy to see that $P$ and $Q$ are nonsingular.
\end{itemize}
\end{rema}

Theorem \ref{t007} and Remark \ref{rem100000} (i) yield an elimination method  to compute $\ADE$, which is presented as follows.

\begin{algorithm}[H]
\caption{\textsc{Compute the $(D, E)$-inverse}.}\label{algo:sylvester}
\KwIn{$A \in \matriz{n}{m}$, $D, E \in \matriz{m}{n}$ with $\rk(D)=\rk(E)=\rk(EAD)=r$.}
\KwOut{$\ADE$.}
\begin{enumerate}
\item Execute elementary row operations on the first $m$ rows of the block matrix
\begin{equation*} 
G = \begin{bmatrix} EAD & E \\ D & 0 \end{bmatrix}
\end{equation*}
to get
\begin{equation*} 
G_1 = \begin{bmatrix}
\begin{bmatrix}
W \\ 0 \end{bmatrix} & \begin{bmatrix} X \\ 0 \end{bmatrix} \\ D & 0
\end{bmatrix}.
\end{equation*}
\item  Execute elementary column operations on the first $m$ columns of the  block matrix $G_1$ to get
\begin{equation*} 
G_2 = \begin{bmatrix}
\begin{bmatrix} I_r & 0 \\ 0 & 0 \end{bmatrix} & \begin{bmatrix} X \\ 0 \end{bmatrix} \\
\begin{bmatrix} Y & 0 \end{bmatrix} & 0
\end{bmatrix}.
\end{equation*}
\item $\ADE=YX$.
\end{enumerate}
\end{algorithm}

Theorem \ref{t007} and Remark \ref{rem100000} (ii) yield a more stable numerical method based
on the SVD to compute $\ADE$, which is shown as follows.

\begin{algorithm}[H]
\caption{\textsc{Compute the $(D, E)$-inverse}.}\label{algo:sylvester}
\KwIn{$A \in \matriz{n}{m}$, $D, E \in \matriz{m}{n}$ with $\rk(D)=\rk(E)=\rk(EAD)=r$.}
\KwOut{$\ADE$.}
\begin{enumerate}
\item Compute the SVD of $EAD$, i.e., $EAD=USV^*$.

\item $T=S(1:r,1:r)$, $M=T^{-1/2} \oplus I_{m-r}$, $N=T^{-1/2} \oplus I_{n-r}$.

\item $P=MU^*$, $Q=VN$.

\item $X=PE$, $Y=DQ$.

\item $\ADE=Y(1:m,1:r)\cdot X(1:r,1:n)$.
\end{enumerate}
\end{algorithm}

Next {\tt m-file} shows how Theorem \ref{t007} and the SVD can be used to compute $|ADE$.

\begin{verbatim}
function J = pseudo(A,D,E)
[n m] = size(A);
r = rank(D);
s = rank(E);
t = rank(E*A*D); 
if not(r==s)
   disp('There does not exist the pseudoinverse')
   disp('because rank(D) is not equal to rank(E)')
else
  if not(s==t)
   disp('There does not exist the pseudoinverse')
   disp('because rank(D)=rank(E) but not equal to rank(EAD)')
   else
     [U S V] = svd(E*A*D);
     T = S(1:r,1:r)
     M = [T^(-1/2) zeros(r,m-r); zeros(m-r,r) eye(m-r)]; 
     N = [T^(-1/2) zeros(r,n-r); zeros(n-r,r) eye(n-r)];
     P = M*U';
     Q = V*N;
     X = P*E; 
     Y = D*Q;
     J = Y(1:m,1:r)*X(1:r,1:n);   
   end
end
\end{verbatim}


\vskip.3truecm
\noindent Julio Ben\'{\i}tez\par
\noindent E-mail address: jbenitez@mat.upv.es
\vskip.3truecm
\noindent Enrico Boasso\par
\noindent E-mail address: enrico\_odisseo@yahoo.it 
\vskip.3truecm
\noindent Hongwei Jin\par
\noindent E-mail address: hw-jin@hotmail.com 
\end{document}